\newtheorem{thm}{Theorem}[section]
\newtheorem{cor}[thm]{Corollary}
\newtheorem{lem}[thm]{Lemma}
\newtheorem{prop}[thm]{Proposition}
\theoremstyle{definition}
\theoremstyle{remark}
\newtheorem{rem}[thm]{Remark}
\numberwithin{equation}{section}
\newcommand{\abs}[1]{\left\vert#1\right\vert}
\newcommand{\set}[1]{\left\{#1\right\}}
\newcommand{\eps}{\varepsilon}
\newcommand{\dbar}{\bar\partial}
\newcommand{\ddbar}{\partial\bar\partial}
\DeclareMathOperator{\dom}{Dom}
\DeclareMathOperator{\re}{Re}
\DeclareMathOperator{\im}{Im}
\DeclareMathOperator{\supp}{supp}
\DeclareMathOperator{\dist}{dist}
\DeclareMathOperator{\Tan}{Tan}
\begin{document}

\title[Bounded Plurisubharmonic Functions for Lipschitz Domains in $\mathbb{CP}^n$]{Bounded Plurisubharmonic Exhaustion Functions for Lipschitz Pseudoconvex Domains in $\mathbb{CP}^n$}%
\author{Phillip S. Harrington}%
\address{SCEN 301, 1 University of Arkansas, Fayetteville, AR 72701}%
\email{psharrin@uark.edu}%


\maketitle

\tableofcontents


\section{Introduction}

In \cite{DiFo77b}, Diederich and Fornaess proved that for every bounded pseudoconvex domain $\Omega$ with $C^2$ boundary in a Stein manifold, there exists a defining function $\rho$ and an exponent $0<\eta<1$ such that $-(-\rho)^\eta$ is strictly plurisubharmonic on $\Omega$ (see also the simplified proof for $C^3$ domains in \cite{Ran81}).  Their starting point is Oka's Lemma, which guarantees that $-\log\delta$ is plurisubharmonic on $\Omega$ where $\delta(z)$ measures the distance from $z$ to the boundary of $\Omega$.  The proof proceeds by modifying $\delta$ with a global strictly plurisubharmonic exhaustion function ($\abs{z}^2$ in $\mathbb{C}^n$) and carefully studying the complex hessian of the resulting function.  In \cite{DiFo77a}, Diederich and Fornaess also showed that this result is sharp by showing that for every $0<\eta<1$ there exists a smooth bounded pseudoconvex domain in $\mathbb{C}^n$ for which no defining function $\rho$ exists such that $-(-\rho)^\eta$ is plurisubharmonic.

Ohsawa and Sibony generalized this result to $\mathbb{CP}^n$ in \cite{OhSi98}.  In this setting there is no global strictly plurisubharmonic function, but Takeuchi's Theorem \cite{Tak64} strengthens Oka's Lemma by showing that for pseudoconvex domains $\Omega\subset\mathbb{CP}^n$, $-\log\delta$ is strictly plurisubharmonic.  Takeuchi's result has been further generalized in \cite{Ele75}, \cite{Suz76}, and \cite{GrWu78}, in which it is shown that the strict plurisubharmonicity of $-\log\delta$ depends on the positive bisectional curvature of $\mathbb{CP}^n$, so the constant involved is independent of $\Omega$.  A new proof when the boundary is $C^2$ is provided in \cite{CaSh05}.  Ohsawa and Sibony were able to show that Takeuchi's Theorem implies that $-\delta^\eta$ is strictly plurisubharmonic for some $0<\eta<1$ when $\delta$ is $C^2$ (see also \cite{HaSh07} for related results).

In \cite{KeRo81}, Kerzman and Rosay showed that bounded pseudoconvex domains in $\mathbb{C}^n$ with $C^1$ boundaries always admit a bounded plurisubharmonic exhaustion function.  Once again, Oka's Lemma and the existence of a global strictly plurisubharmonic function provided the key tools.  Kerzman and Rosay introduced the idea of locally translating $-\log\delta$ in a direction transverse to the boundary to obtain a bounded plurisubharmonic function that reflected the boundary geometry.  By taking the supremum over a carefully chosen family of such functions they obtained a plurisubharmonic exhaustion function.  The global strictly plurisubharmonic function was used to patch these local functions together.  Demailly \cite{Dem87} refined this argument to obtain a bounded plurisubharmonic function for any bounded domain with Lipschitz boundary, and showed that the resulting function is comparable to $(\log\delta)^{-1}$.  A domain is said to have Lipschitz boundary if the boundary can locally be written as the graph of a Lipschitz function, which guarantees the existence of the local transverse direction required by Kerzman and Rosay.  By further refining the local construction, the author was able to show in \cite{Har08a} that the result of Diederich and Fornaess holds on all bounded domains with Lipschitz boundary in $\mathbb{C}^n$.

The main result of the present paper is the following:
\begin{thm}
\label{thm:main_theorem}
  Let $\Omega\subset\mathbb{CP}^n$ be a pseudoconvex domain with Lipschitz boundary.  Then there exists a Lipschitz defining function $\rho$ and an exponent $0<\eta<1$ such that
  \[
    i\ddbar(-(-\rho)^\eta)\geq C(-\rho)^\eta\omega
  \]
  in the sense of currents where $C>0$ and $\omega$ is the K\"ahler form for the Fubini-Study metric on $\mathbb{CP}^n$.
\end{thm}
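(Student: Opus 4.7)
The strategy is to adapt the $\mathbb{C}^n$ Lipschitz argument of \cite{Har08a} to the projective setting, replacing the global strictly plurisubharmonic function $|z|^2$, available on any Stein manifold, with the intrinsic strict plurisubharmonicity of $-\log\delta$ supplied by Takeuchi's Theorem. The universal constant $A>0$ in $i\ddbar(-\log\delta)\geq A\omega$ will play exactly the role that $i\ddbar|z|^2$ played in \cite{Har08a}: a reservoir of positivity, depending only on the Fubini--Study curvature and not on $\Omega$, available to absorb cutoff errors during patching.

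First, I would cover $\partial\Omega$ by finitely many affine charts $U_j$ in which $\partial\Omega$ is a Lipschitz graph with a constant Euclidean direction $v_j$ uniformly transverse to the boundary. For small $t>0$ the holomorphic translation $\tau_j^t(z):=z+tv_j$ pushes $\Omega\cap U_j$ into itself with $\delta(\tau_j^t(z))\geq c_j t$, so $\psi_j^{(t)}:=-\log\delta\circ\tau_j^t$ is a bounded plurisubharmonic function on $\Omega\cap U_j$; this Kerzman--Rosay--Demailly device is insensitive to the low regularity of $\delta$ because it never differentiates it. I would then assemble a single bounded Lipschitz plurisubharmonic function $\Phi$ on $\Omega$ by forming a regularized maximum of the $\psi_j^{(t)}$ over a carefully scaled family of parameters $t$, as in \cite{Har08a}, using a large multiple of $-\log\delta$ (rather than of $|z|^2$) together with Takeuchi's inequality to absorb the patching errors. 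A Lipschitz defining function $\rho$ is then extracted by an exponential or power manipulation of $\Phi$, tuned so that $\rho$ has the correct boundary behavior.

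The target inequality would follow from the Ohsawa--Sibony identity
\[
  i\ddbar(-\delta^\eta)=\eta\,\delta^\eta\!\left(i\ddbar(-\log\delta)-\eta\,\frac{i\partial\delta\wedge\dbar\delta}{\delta^2}\right),
\]
split as $i\ddbar(-\log\delta)=(1-\eta)\,i\ddbar(-\log\delta)+\eta\,i\ddbar(-\log\delta)$: Oka's Lemma applied to the second copy dominates the bad term $\eta\,i\partial\delta\wedge\dbar\delta/\delta^2$, while Takeuchi applied to the first copy retains $(1-\eta)A\omega$, yielding the desired lower bound $i\ddbar(-\delta^\eta)\geq\eta(1-\eta)A\,\delta^\eta\omega$. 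In the present Lipschitz setting this identity must be interpreted distributionally and applied not to $\delta$ directly but to the regularized construction encoded in $\Phi$. The crucial step, and the place where I expect most of the work to live, is precisely the patching: executing it so that the resulting Lipschitz defining function $\rho$ simultaneously preserves a quantitative Oka-type inequality in the sense of currents (needed to absorb the bad term directionally) and inherits Takeuchi's universal constant globally, without recourse to an ambient strictly plurisubharmonic function.
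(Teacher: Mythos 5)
Your proposal follows the Kerzman--Rosay/Demailly/\cite{Har08a} template: cover $\partial\Omega$ by finitely many charts, translate $-\log\delta$ transversally in each, and patch the local pieces by a regularized maximum while using a large multiple of $-\log\delta$ (in place of $|z|^2$) to absorb the cutoff errors, invoking Takeuchi's inequality for the positivity. This is precisely the route the paper considers and rejects. The difficulty it identifies is that in $\mathbb{CP}^n$ the only available positivity reservoir, $-\log\delta$, is \emph{unbounded} near $\partial\Omega$, unlike $|z|^2$ in $\mathbb{C}^n$. The patching errors arising from cutoff functions on a finite cover are of the same order as $-\log\delta$ itself, so multiplying $-\log\delta$ by a large constant to dominate them both fails to give an a priori bound and, more seriously, pushes the constants in a direction where the final power manipulation cannot recover a bounded defining function with a positive Diederich--Fornaess exponent. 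In other words, ``a large multiple of $-\log\delta$'' is not a substitute for ``a large multiple of $|z|^2$'': the former is not compatible with the boundedness requirement the theorem imposes.

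The paper's actual mechanism is structurally different. Rather than a finite cover and a partition of unity, it constructs, for \emph{every} $p\in\partial\Omega$, a local weight $\mu_p$ (a potential for $\omega$ compatible with a holomorphic isometry $\phi_t^p$, Lemma \ref{lem:weight_and_map}) and a local candidate $\lambda_t^p(z)$, and then takes a \emph{double supremum}: first over the translation parameter $t$ in a window $[a_p(z),b_p(z)]$ (Lemmas \ref{lem:a_b_defn}--\ref{lem:interior_t}), and then over boundary points $p$ with $z\in\Gamma_p$ (Lemma \ref{lem:interior_Gamma}). The crucial feature is that the maximizers stay strictly interior to both the $t$-window and the $p$-family; in particular Lemma \ref{lem:tools}\eqref{item:t_boundary_sequence} guarantees a tangential direction in which $\mu_p$ decreases at a definite rate near $\partial\Gamma_p$, so the supremum over $p$ never ``sees'' the boundary of a chart. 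There are no cutoff derivatives and no patching error terms at all; the sup of plurisubharmonic functions is handled by the sub-mean-value inequality (Lemmas \ref{lem:lambda_p_continuity}, \ref{lem:lambda_continuity}).

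Two further points in your proposal would also need repair. First, the translations $\tau_j^t(z)=z+tv_j$ in affine charts are holomorphic but not isometries of the Fubini--Study metric, so $(\tau_j^t)^*\omega\neq\omega$; the quantitative estimates degrade near chart boundaries exactly where you need them. The paper instead uses genuine holomorphic isometries $\phi_t^{p,q}$ (unitary maps on $\mathbb{C}^{n+1}$), for which $(\phi_t^{p,q})^*\omega=\omega$ identically, and matches them with a specially constructed K\"ahler potential $\mu_{p,q}$ invariant under the flow. Second, the Ohsawa--Sibony identity for $i\ddbar(-\delta^\eta)$ requires $\delta\in C^2$; Proposition \ref{prop:counterexample} in the paper shows that for Lipschitz $\Omega\subset\mathbb{CP}^n$ the function $-\delta^\eta$ can fail to be plurisubharmonic on \emph{any} neighborhood of $\partial\Omega$ for \emph{every} $\eta\in(0,1)$. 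So the defining function in the theorem genuinely cannot be $-\delta$; the argument must produce a different $\rho$, and the ``interpret the identity distributionally for the regularized $\Phi$'' step you mention is where the real work lies and is not supplied.
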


\begin{rem}
  Since the worm domains of Diederich and Fornaess \cite{DiFo77a} can be imbedded in $\mathbb{CP}^n$, this result remains sharp.
\end{rem}

Ohsawa and Sibony's result \cite{OhSi98} shows that we can take $\rho=-\delta$ when the boundary is $C^2$.  Diederich and Fornaess have already shown that this is not always possible in $\mathbb{C}^n$ with the Euclidean metric \cite{DiFo77b}.  In contrast to the Ohsawa-Sibony result, we have the following for Lipschitz boundaries:
\begin{prop}
\label{prop:counterexample}
  For any $n\in\mathbb{N}$, there exists a Lipschitz domain $\Omega\subset\mathbb{CP}^n$ such that for any open neighborhood $U$ of $\partial\Omega$ and $0<\eta<1$, $-\delta^\eta$ fails to be plurisubharmonic on $U\cap\Omega$.
\end{prop}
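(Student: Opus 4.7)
The plan is to exhibit an explicit Lipschitz pseudoconvex domain with a concave edge, reduce the verification to a one-variable subharmonicity computation on a complex line passing through the edge, and exploit the fact that $|z|^\eta$ is subharmonic on $\mathbb{C}$ so that $-|z|^\eta$ fails to be. In an affine chart $\mathbb{C}^n \subset \mathbb{CP}^n$, let
\[
\Omega_0 = \{z_1 \in \mathbb{C} : |z_1| < 1\} \setminus \{\re z_1 \leq 0 \text{ and } \im z_1 \leq 0\}
\]
be the three-quarter disk with a concave corner at the origin. Set $\Omega = \Omega_0$ when $n = 1$, and when $n \geq 2$ set $\Omega = \Omega_0 \times \{(z_2,\ldots,z_n) \in \mathbb{C}^{n-1} : |z_j| < r \text{ for all } j\}$ for some small $r > 0$. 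The boundary $\partial \Omega$ is clearly Lipschitz, $\overline{\Omega}$ is compactly contained in the affine chart, and $\Omega$ is pseudoconvex since it is a product of pseudoconvex domains (every open subset of $\mathbb{C}$ is pseudoconvex, a polydisc is pseudoconvex, and pseudoconvexity is a local property of the boundary, so the conclusion transfers from $\mathbb{C}^n$ to $\mathbb{CP}^n$).

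The key computation takes place on the complex line $\ell = \{z_2 = \cdots = z_n = 0\}$ over the open sector $S = \{\re z_1 > 0,\, \im z_1 > 0\}$ near the origin. For any test point $p = (z_1,0,\ldots,0)$ with $z_1 \in S$ and $|z_1|$ sufficiently small, I would show the nearest point of $\partial\Omega$ in both the Euclidean and the Fubini-Study metric is the corner $(0,\ldots,0)$: on each of the two boundary segments of $\Omega_0$ meeting at the origin, a first-derivative calculation shows the distance from $p$ is strictly monotone away from the corner, so the minimum over that segment is attained at the corner, and every other portion of $\partial\Omega$ lies at distance at least $\min(1 - |z_1|, r) > |z_1|$. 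Via the explicit Fubini-Study distance formula this yields $\delta(p) = \arctan|z_1|$ for all such $p$.

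Since plurisubharmonicity of $-\delta^\eta$ would force subharmonicity of its restriction to $\ell$, I would compute the Euclidean Laplacian in $z_1$ of the smooth function $-(\arctan|z_1|)^\eta$ on $S$. A direct radial calculation gives
\[
\Delta_{z_1}\bigl(-(\arctan|z_1|)^\eta\bigr) = -\eta^2 |z_1|^{\eta-2}\bigl(1 + O(|z_1|^2)\bigr),
\]
which is strictly negative for $|z_1|$ small. Since every neighborhood $U$ of $\partial\Omega$ contains points $p$ of this form arbitrarily close to $\partial\Omega$, it follows that $-\delta^\eta$ fails to be plurisubharmonic on $U \cap \Omega$ for every such $U$ and every $\eta \in (0,1)$.

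The main technical obstacle I anticipate is verifying that the Fubini-Study nearest boundary point really is the corner for every test point in the sector, not just on the angular bisector. This reduces to analyzing the critical-point equation for the Fubini-Study distance along each of the two bounding segments, which follows the same monotonicity argument as in the Euclidean case, carried out on the explicit $\arccos$ representation of the Fubini-Study distance.
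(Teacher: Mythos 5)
Your proposal is correct and takes essentially the same approach as the paper: both construct a Lipschitz pseudoconvex domain with a reentrant corner in a single complex coordinate, observe that for points in the opposite quadrant the Fubini--Study distance to the boundary reduces to $\arctan$ of the modulus of that coordinate, and compute that the complex Hessian of $-\delta^\eta$ in that coordinate direction is negative near the corner. The paper uses the ball as outer boundary and checks pseudoconvexity via the Levi-flat boundary of the removed wedge, while you use a product domain and invoke preservation of pseudoconvexity under products; the paper phrases the final contradiction as an inequality on $\delta$ violated by convexity of $\arcsin$, while you compute the Laplacian directly, but these are the same calculation in substance.
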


As an application, consider Theorem 2 in \cite{CSW04} (see also Proposition 2.4 in \cite{CaSh07}).  By adapting techniques of Berndtsson and Charpentier \cite{BeCh00}, Cao, Shaw, and Wang show that the existence of a defining function with a positive Diederich-Fornaess exponent implies Sobolev regularity for the $\dbar$-Neumann family of operators.  Combining their proof with Theorem \ref{thm:main_theorem}, we have
\begin{thm}
  Let $\Omega\subset\mathbb{CP}^n$ be a pseudoconvex domain with Lipschitz boundary, and let $0<\eta<1$ be given by Theorem \ref{thm:main_theorem}.  Then the $\dbar$-Neumann operator $N_{p,q}$ exists on $L^2_{(p,q)}(\Omega)$ for all $0\leq p,q\leq n$ and the operators $N_{p,q}$, $\dbar N_{p,q}$, $\dbar^* N_{p,q}$, and the Bergman Projection $P_{p,q}$ are continuous in the $L^2$ Sobolev space $W^s(\Omega)$ whenever $0<s<\eta$.
\end{thm}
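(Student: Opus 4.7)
The plan is to apply the Berndtsson-Charpentier weighted $L^2$ method, as implemented by Cao, Shaw, and Wang in \cite{CSW04}, feeding it the defining function $\rho$ and Diederich-Fornaess exponent $\eta$ produced by Theorem \ref{thm:main_theorem}. The argument naturally splits into three pieces: establishing existence of $N_{p,q}$, deriving weighted a priori estimates that encode Sobolev regularity, and justifying those estimates when $\rho$ is only Lipschitz.

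First I would verify that $N_{p,q}$ exists on $L^2_{(p,q)}(\Omega)$ for each $0\leq p,q\leq n$. Takeuchi's Theorem shows that $-\log\delta$ is strictly plurisubharmonic on $\Omega$ with a constant that does not depend on $\Omega$, so H\"ormander's weighted $L^2$ machinery, adapted to the Fubini-Study background on $\mathbb{CP}^n$, yields $L^2$ solvability of $\dbar$ at every bidegree. Closed range of $\dbar$ at adjacent bidegrees combined with the Hodge decomposition then produces $N_{p,q}$.

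Next I would set up the weighted spaces $L^2(\Omega,(-\rho)^{-2t}\,dV)$ for $0<t<\eta$ and run the Berndtsson-Charpentier computation with $-(-\rho)^\eta$ as a plurisubharmonic weight. The inequality
\[
i\ddbar(-(-\rho)^\eta)\geq C(-\rho)^\eta\omega
\]
from Theorem \ref{thm:main_theorem} supplies the curvature term needed to control $\norm{u}_{L^2((-\rho)^{-2t})}$ by $\norm{\dbar u}_{L^2((-\rho)^{-2t})}+\norm{\dbar^* u}_{L^2((-\rho)^{-2t})}$ for forms in $\dom(\dbar)\cap\dom(\dbar^*)$. A Hardy-type inequality on the Lipschitz domain $\Omega$ then converts this weighted $L^2$ bound into a $W^s(\Omega)$ bound for any $0<s<t$; since $t$ may be taken arbitrarily close to $\eta$, the stated continuity on $W^s(\Omega)$ follows for every $0<s<\eta$. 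The same framework gives the corresponding continuity for $\dbar N_{p,q}$, $\dbar^* N_{p,q}$, and $P_{p,q}$.

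The main obstacle will be that the inequality of Theorem \ref{thm:main_theorem} holds only in the sense of currents and $\rho$ is only Lipschitz, so the integration-by-parts and commutator manipulations at the heart of the Berndtsson-Charpentier argument cannot be carried out directly on $\rho$. I would handle this by regularization: construct a decreasing sequence of smooth defining functions $\rho_\epsilon$ via local convolution in Lipschitz charts, derive the relevant estimates for each $\rho_\epsilon$ with constants uniform in $\epsilon$ (the uniformity coming from the current inequality combined with standard convolution facts for plurisubharmonic functions), and pass to the limit $\epsilon\to 0$. The Hardy inequality on Lipschitz domains is classical, so no additional work is needed on that side.
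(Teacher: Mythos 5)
Your proposal is correct and follows essentially the same route as the paper: the paper's proof of this theorem is a direct appeal to Theorem~2 of \cite{CSW04} (see also Proposition~2.4 of \cite{CaSh07}) with the hypothesis supplied by Theorem~\ref{thm:main_theorem}, and no further argument is given. Your outline accurately unpacks what those references contain---existence of $N_{p,q}$ via Takeuchi and H\"ormander, the Berndtsson--Charpentier weighted estimates with $-(-\rho)^\eta$ as the weight, and a regularization step to handle the merely Lipschitz defining function.
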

For further background on the $\dbar$-Neumann problem in complex manifolds, see \cite{FoKo72} or \cite{ChSh01}.  Details for $\mathbb{CP}^n$ are provided in \cite{CSW04}.

After introducing our notation and definitions in Section \ref{sec:notation}, we will compute derivatives and estimates for derivatives of our basic geometric objects in Section \ref{sec:geometric computations}.  As in \cite{KeRo81} and related papers, we will need a holomorphic map to translate $-\log\delta$, and because of the delicacy of our estimates, we will need a generator for the K\"ahler form that is compatible with this holomorphic map.  These will be constructed in Lemma \ref{lem:weight_and_map}.  Some of the results in this section are well-known, but we include them so that our paper will be self-contained (except for the proof of Takeuchi's Theorem).

In Section \ref{sec:lipschitz_boundary}, we develop our tools for dealing with Lipschitz boundaries.  Lemma \ref{lem:transverse_vector_field} will construct a map from a boundary point $p$ to another point $v(p)$ so that geodesics through $v(p)$ will be uniformly transverse to the boundary in a neighborhood of $p$.  Furthermore, in a sense that can be made precise by considering directional derivatives with respect to the tangent cone of $\partial\Omega$, $v(p)$ will be $C^1$ on $\partial\Omega$.  Critical properties of this map will be developed in the following lemmas.

Theorem \ref{thm:main_theorem} will be proven in Section \ref{sec:proof_main_theorem}.  We begin with Lemma \ref{lem:tools}, which unites the results of Sections \ref{sec:geometric computations} and \ref{sec:lipschitz_boundary} in the way that will be most helpful for our main proof.  As in \cite{KeRo81}, \cite{Dem87}, and \cite{Har08a}, the proof proceeds by locally translating $-\log\delta$ transversally to the boundary and patching these local functions together.  However, the lack of a global strictly plurisubharmonic function will complicate the patching argument.  The previously named papers use a finite cover of $\partial\Omega$ with a subordinate partition of unity.  On each set in the cover a local plurisubharmonic function is constructed and then these are patched together with the global strictly plurisubharmonic function controlling the error terms. However, $-\log\delta$ may not be able to control the resulting error terms.  Instead, we will construct a local defining function for every boundary point, depending on the boundary point in a $C^1$ way.  This will allow us to replace the traditional patching argument with an optimization argument that introduces controllable error terms.

We conclude in Section \ref{sec:counter_example} with the proof of Proposition \ref{prop:counterexample}.  This example demonstrates that at least some of the complexity in the proof of Theorem \ref{thm:main_theorem} is unavoidable, since we are not able to use a function as nice as the distance function.

\section{Notation and Definitions}
\label{sec:notation}

Let $\mathbb{CP}^n$ denote $\mathbb{C}^{n+1}\backslash\set{0}$ with the equivalence relation $w\sim z$ if $w=\lambda z$ for some $\lambda\in\mathbb{C}\backslash\set{0}$.  The equivalence class for the representative element $z\in\mathbb{C}^{n+1}\backslash\set{0}$ will be denoted $[z]\in\mathbb{CP}^{n}$, although we will abuse notation and identify the equivalence class $[z]$ with the representative element $z$ whenever this can be done without ambiguity.  As is customary, we will denote elements of $\mathbb{C}^{n+1}\backslash\set{0}$ representing elements of $\mathbb{CP}^n$ by $w=[w^1:\cdots:w^{n+1}]$.  We will use the dot product to represent the customary dot product on $\mathbb{C}^{n+1}$.  When this is intended to be hermitian, we will make this explicit (e.g., $a\cdot\bar{b}$).  We will use $\abs{\cdot}$ for the Euclidean length of a vector in $\mathbb{C}^{n+1}$.

We equip $\mathbb{CP}^n$ with the Fubini-Study metric given by the K\"ahler form $\omega=i\ddbar\log\abs{w}$.  Inner products of vectors in this metric will be denoted $\left<\cdot,\cdot\right>_\omega$, with $\abs{\cdot}_\omega$ denoting the length of a vector.  Under our normalization for the Fubini-Study metric, the distance between any two points in $\mathbb{CP}^n$ is given by
\[
  \dist(p,q)=\arccos\left(\frac{\abs{p\cdot \bar{q}}}{\abs{p}\abs{q}}\right)=\arcsin\abs{\frac{p}{\abs{p}}-\frac{(p\cdot\bar{q})q}{\abs{p}\abs{q}^2}}.
\]
If $0<\dist(p,q)<\frac{\pi}{2}$, the unique geodesic connecting $p$ to $q$, parameterized according to distance, can be represented by
\[
  \gamma_t(p,q)=\cos t\frac{p}{\abs{p}}+\sin t\cot(\dist(p,q))\left(\frac{\abs{p}q}{q\cdot\bar{p}}-\frac{p}{\abs{p}}\right),
\]
where $t\in\mathbb{R}$.  Indeed, $\frac{p}{\abs{p}}$ and $\cot(\dist(p,q))\left(\frac{\abs{p}q}{q\cdot\bar{p}}-\frac{p}{\abs{p}}\right)$ form an orthonormal set, so $\gamma_t(p,q)$ is the unique great circle satisfying $\gamma_0(p,q)=p$ and $[\gamma_{\dist(p,q)}(p,q)]=[q]$.

We will define tangent vectors as follows:
\[
  \Tan(\mathbb{CP}^n,p)=\set{u\in\mathbb{C}^{n+1}:u=0\text{ or }\frac{u}{\abs{u}}=\gamma_0'(p,q)\text{ for some }q\in\mathbb{CP}^n}.
\]
Note that we have chosen our representative element for $[\gamma_t]$ to satisfy $\abs{\gamma_t}=\abs{\gamma_t'}=1$ and $\gamma_t\cdot\overline{\gamma_t}'=0$, so we can compute the inner product of $u_1,u_2\in\Tan(\mathbb{CP}^n,p)$ with respect to the Fubini-Study metric by
\begin{equation}
\label{eq:geodesic_inner_products}
  \left<u_1,u_2\right>_\omega=\re\left(u_1\cdot\overline{u_2}\right).
\end{equation}

Since we will be working on Lipschitz domains, directional derivatives will be particularly important.  For $u\in\Tan(\mathbb{CP}^n,p)$ and a real-valued function $f$ that is Lipschitz in a neighborhood of $p$, let $q\in\mathbb{CP}^n$ satisfy $\frac{u}{\abs{u}}=\gamma_0'(p,q)$ and denote
\begin{align*}
  D_{p,u}^+f(p)&=\lim_{h\rightarrow 0^+}\frac{f(\gamma_{h\abs{u}}(p,q))-f(p)}{h},\\
  D_{p,u}^-f(p)&=\lim_{h\rightarrow 0^+}\frac{f(p)-f(\gamma_{-h\abs{u}}(p,q))}{h},
\end{align*}
when these limits exist.  If $D_{p,u}^+f(p)=D_{p,u}^-f(p)$, we will simply write $D_{p,u} f(p)$.  If $f$ is differentiable at $p$, we let $D_p f(p)$ denote the element of $\Tan(\mathbb{CP}^n,p)$ satisfying $D_{p,u}f(p)=\left<D_p f(p),u\right>_\omega$ for all $u\in\Tan(\mathbb{CP}^n,p)$.  When differentiating with respect to a single real variable $t$, we will simply write $D_t^+$ or $D_t^-$, since the $u$ is unnecessary.  For example, we have
\[
  D_{p,u}^\pm f(p)=\left.D_t^\pm f(\gamma_{t\abs{u}}(p,q))\right|_{t=0}.
\]

If $g$ is $\mathbb{CP}^n$-valued in a neighborhood of $p$, $D_{p,u}^\pm g(p)$ should be an element of $\Tan(\mathbb{CP}^n,g(p))$ when it exists, so when $\frac{u}{\abs{u}}=\gamma_0'(p,q)$ we have
\[
  D_{p,u}^\pm g(p)=\lim_{h\rightarrow 0^\pm}\frac{\dist(g(p),g(\gamma_{h\abs{u}}(p,q)))}{h}\gamma_0'(g(p),g(\gamma_{h\abs{u}}(p,q))).
\]
Equivalently,
\begin{equation}
\label{eq:vector_derivative}
  D_{p,u}^\pm g(p)=\lim_{h\rightarrow 0^\pm}h^{-1}\left(\frac{g(\gamma_{h\abs{u}}(p,q))}{\abs{g(\gamma_{h\abs{u}}(p,q))}}-\frac{g(\gamma_{h\abs{u}}(p,q))\cdot\overline{g(p)}}{\abs{g(p)}\abs{g(\gamma_{h\abs{u}}(p,q))}}\frac{g(p)}{\abs{g(p)}}\right).
\end{equation}
When $g$ is differentiable at $p$, $D_p g(p):\Tan(\mathbb{CP}^n,p)\rightarrow\Tan(\mathbb{CP}^n,g(p))$ is a linear map.

We will also need the distance to a geodesic, which we will denote $\dist(z,\gamma(p,q))$.  This will be explicitly computed in \eqref{eq:distance_to_geodesic} as part of the proof of Lemma \ref{lem:weight_and_map}.

Let $\Omega\subset\mathbb{CP}^n$ be a nonempty, open, connected set, and denote the distance function by
\[
  \delta(z)=\inf_{w\in\partial\Omega}\dist(z,w).
\]
Since $\Omega$ is assumed to have nonempty interior, we will always have $\sup_{\mathbb{CP}^n}\delta<\frac{\pi}{2}$.  Basic properties of the distance function in $\mathbb{R}^n$ were developed in Section 4 of \cite{Fed59} (see also \cite{Wei75}, \cite{KrPa81}, \cite{HeMc12}, and \cite{HaRa13}).

Following Federer, for $p\in\partial\Omega$, we define $\Tan(\partial\Omega,p)$ to be the set of all $u\in\Tan(\mathbb{CP}^n,p)$ such that either $u=0$ or for every $\frac{\pi}{2}>\eps>0$ there exists $q\in\partial\Omega$ such that $0<\dist(p,q)<\eps$ and $\abs{\frac{u}{\abs{u}}-\gamma_0'(p,q)}_\omega\leq\eps$.  For a function $f$ defined on a neighborhood of $p$ in $\partial\Omega$ and $u\in\Tan(\partial\Omega,p)\backslash\set{0}$, we define
\[
  D_{p,u}^{\partial\Omega}f(p)=\lim_{j\rightarrow\infty}\frac{f(p_j)-f(p)}{\dist(p_j,p)}
\]
if and only if this limit exists for every sequence $\{p_j\}\subset\partial\Omega$ satisfying $[p_j]\rightarrow[p]$ and $\gamma_0'(p,p_j)\rightarrow\frac{u}{\abs{u}}$, and the limit is independent of the choice of $\{p_j\}$.

In any metric space, it remains true that $\delta$ is a Lipschitz function with Lipschitz constant $1$.  While this only implies that the distance function is differentiable almost everywhere, we will see that directional derivatives of the distance function always exist in Lemma \ref{lem:directional_derivative}.  Building on a theme from \cite{Fed59}, the key to understanding differentiability of the distance function at $z$ lies in the set of boundary points minimizing the distance to $z$.  To that end, we define
\[
  \Pi_\Omega(z)=\set{p\in\partial\Omega:\delta(z)=\dist(z,p)}.
\]

Since the definition of Lipschitz boundary is essentially a local definition, we will now introduce some notation for working in local coordinate charts.  Define $U=\set{z\in\mathbb{CP}^n:z^{n+1}\neq 0}$.  As usual, $U\cong\mathbb{C}^n$ via the holomorphic map $\tilde{z}=\left(\frac{z^1}{z^{n+1}},\cdots,\frac{z^n}{z^{n+1}}\right)$, with inverse (modulo the equivalence relation) $[z]=[\tilde{z}^1:\cdots:\tilde{z}^n:1]$.  When we say that $\Omega\subset\mathbb{CP}^n$ is a Lipschitz domain, we mean that for every $p\in\partial\Omega$ there exists a rotation mapping $p$ to $[0:\cdots:0:1]$ such that in our local coordinate chart we can write
\begin{equation}
\label{eq:graph_definition}
  \Omega\cap B(p,R)=\set{z\in B(p,R):\im\tilde{z}^n<\varphi(\tilde{z}',\re\tilde{z}^n)}
\end{equation}
for some $R>0$, where $\varphi$ is a Lipschitz function and $\tilde{z}'=(\tilde{z}^1,\cdots,\tilde{z}^{n-1})$.

We say that a Lipschitz function $\rho:\mathbb{CP}^n\rightarrow\mathbb{R}$ is a Lipschitz defining function for $\Omega$ if $\Omega=\set{z\in\mathbb{CP}^n:\rho(z)<0}$ and $0<\inf\abs{\nabla\rho}<\sup\abs{\nabla\rho}<\infty$.  In contrast with the $C^k$ case, the existence of a Lipschitz defining function for $\Omega$ is not sufficient to guarantee that $\Omega$ is a Lipschitz domain, because of the failure of the implicit function theorem (see Section 1.2.1 in \cite{Gri85} for a counterexample).

\section{Geometric Computations}
\label{sec:geometric computations}

We begin this section by computing some derivatives of our fundamental geometric objects.  Our most important derivative is the following:
\begin{lem}
\label{lem:point_distance_derivative}
  Let $p,q\in\mathbb{CP}^n$ satisfy $0<\dist(p,q)<\frac{\pi}{2}$.  Then
  \begin{equation}
    \label{eq:point_distance_derivative}
    D_p\dist(p,q)=-\gamma_0'(p,q).
  \end{equation}
\end{lem}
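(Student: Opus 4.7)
The plan is to compute the directional derivative $D_{p,u}\dist(p,q)$ directly from the explicit formulas in Section~\ref{sec:notation} and match it against $\langle -\gamma_0'(p,q),u\rangle_\omega=-\re(\gamma_0'(p,q)\cdot\bar u)$. Since $\dist(p,q)=\arccos(|p\cdot\bar q|/(|p||q|))$ is smooth in $p$ while $0<\dist(p,q)<\pi/2$, the distance function is classically differentiable at $p$, so it is enough to check the identity on unit tangent vectors and invoke linearity.

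To keep the algebra clean, I would first normalize representatives by taking $|p|=|q|=1$ and using the $\mathbb{C}^*$ freedom in the representative of $[p]$ to arrange $p\cdot\bar q\geq 0$, so that $p\cdot\bar q=\cos\dist(p,q)$. For any unit $u\in\Tan(\mathbb{CP}^n,p)$, the normalization $\gamma_t\cdot\overline{\gamma_t'}=0$ built into the geodesic parametrization forces $p\cdot\bar u=0$, hence the geodesic starting at $p$ with initial velocity $u$ is simply $\sigma_t=\cos(t)\,p+\sin(t)\,u$.

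The middle step is a one-variable computation. Expanding
\[
  |\sigma_t\cdot\bar q|^2=\cos^2(t)(p\cdot\bar q)^2+2\sin(t)\cos(t)(p\cdot\bar q)\re(u\cdot\bar q)+\sin^2(t)|u\cdot\bar q|^2
\]
and differentiating $\arccos|\sigma_t\cdot\bar q|$ at $t=0$ (using $p\cdot\bar q=\cos\dist(p,q)>0$) yields
\[
  D_{p,u}\dist(p,q)=-\frac{\re(u\cdot\bar q)}{\sin\dist(p,q)}.
\]
Substituting $|p|=|q|=1$ and $q\cdot\bar p=\cos\dist(p,q)$ into the formula for $\gamma_t(p,q)$ in Section~\ref{sec:notation} gives $\gamma_0'(p,q)=\cot\dist(p,q)\bigl(q/\cos\dist(p,q)-p\bigr)$, and since $p\cdot\bar u=0$ a one-line computation yields $\gamma_0'(p,q)\cdot\bar u=(q\cdot\bar u)/\sin\dist(p,q)$. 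Taking real parts and using $\re(q\cdot\bar u)=\re(u\cdot\bar q)$ matches the two expressions.

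The main obstacle here is not analytic but notational: one must carefully juggle representatives of $[p]\in\mathbb{CP}^n$ with the orthogonality convention $p\cdot\bar u=0$ baked into the tangent space, and keep track of which pairing (ordinary dot product, hermitian pairing $u_1\cdot\overline{u_2}$, or $\langle\cdot,\cdot\rangle_\omega$) is in play at each step. Once the normalizations are fixed, the identity collapses to a trigonometric match.
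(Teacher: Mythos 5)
Your proof is correct and follows essentially the same route as the paper: compute the directional derivative of $\dist(\cdot,q)$ along a geodesic through $p$, then match it against $\re(\gamma_0'(p,q)\cdot\bar u)$ via the explicit formula for the geodesic velocity. The only stylistic difference is that you normalize representatives ($|p|=|q|=1$, $p\cdot\bar q\geq 0$) up front, which collapses the paper's $\csc\sec$ factor and the manipulation using $u\cdot\bar p=0$ into a cleaner one-line computation; one small nit is that after fixing $|p|=1$ the remaining freedom is a $U(1)$ phase rather than full $\mathbb{C}^*$.
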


\begin{proof}
  Pick $u\in\Tan(\mathbb{CP}^n,p)\backslash\set{0}$ and choose $z\in\mathbb{CP}^n$ satisfying $0<\dist(p,z)<\frac{\pi}{2}$ and $\frac{u}{\abs{u}}=\gamma_0'(p,z)$.  We compute
  \[
    \left.\frac{d}{dt}\dist(\gamma_{t\abs{u}}(p,z),q)\right|_{t=0}
    =-\csc(\dist(p,q))\sec(\dist(p,q))\re\left(\frac{(u\cdot\bar{q})(q\cdot\overline{p})}{\abs{q}^2\abs{p}}\right).
  \]
  Since $u\cdot\bar{p}=0$, we have
  \[
    \frac{(u\cdot\bar{q})(q\cdot\bar{p})}{\abs{q}^2\abs{p}}=\sin(\dist(p,q))\cos(\dist(p,q))u\cdot\overline{\gamma_0'(p,q)}.
  \]
  Using \eqref{eq:geodesic_inner_products}, \eqref{eq:point_distance_derivative} follows.
\end{proof}

Our most important $\mathbb{CP}^n$-valued maps are geodesics, so we will next compute the derivative in the second variable.
\begin{lem}
\label{lem:derivatives}
  Let $p,q\in\mathbb{CP}^n$ satisfy $0<\dist(p,q)<\frac{\pi}{2}$, and choose $u\in\Tan(\mathbb{CP}^n,q)$.  Then
  \begin{multline}
  \label{eq:geodesic_vector_derivative_q}
    D_{q,u}\gamma_0'(p,q)=\csc(\dist(p,q))\frac{\abs{q\cdot\bar{p}}}{q\cdot\bar{p}}\left(u-(u\cdot\overline{\gamma_0'(q,p)})\gamma_0'(q,p)\right)\\
    -i\sec(\dist(p,q))\csc(\dist(p,q))\im\left(u\cdot\overline{\gamma_0'(q,p)}\right)\gamma_0'(p,q)
  \end{multline}
  and
  \begin{multline}
  \label{eq:geodesic_derivative_q}
    D_{q,u}\gamma_t(p,q)=\sin t\csc(\dist(p,q))\frac{\abs{q\cdot\bar{p}}}{q\cdot\bar{p}}\left(u-(u\cdot\overline{\gamma_0'(q,p)})\gamma_0'(q,p)\right)\\
    -\sin 2t\csc(2\dist(p,q))i\im\left(u\cdot\overline{\gamma_0'(q,p)}\right)\gamma_t'(p,q).
  \end{multline}
\end{lem}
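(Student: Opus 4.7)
The strategy is to reduce both identities to a direct calculation starting from the explicit formula for $\gamma_t(p,q)$ in Section \ref{sec:notation}, together with the structural identity $\gamma_t(p,q)=\cos t\cdot p/|p|+\sin t\cdot\gamma_0'(p,q)$ that follows from it. For \eqref{eq:geodesic_vector_derivative_q}, since $\gamma_0'(p,q)$ takes values in the fixed complex linear subspace $\{v\in\mathbb{C}^{n+1}:v\cdot\bar p=0\}$, the derivative $D_{q,u}\gamma_0'(p,q)$ coincides with the naive $\mathbb{C}^{n+1}$-valued derivative of $q\mapsto\gamma_0'(p,q)$ along a curve $q(s)=\gamma_{s|u|}(q,r)$ with $\gamma_0'(q,r)=u/|u|$. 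I would normalize representatives so $|p|=|q|=1$, giving $q(s)=\cos(s|u|)q+\sin(s|u|)u/|u|$ and $q'(0)=u$; setting $w=q\cdot\bar p$ and $d=\dist(p,q)$ (so $|w|=\cos d$), Lemma \ref{lem:point_distance_derivative} combined with the symmetry $\dist(p,q)=\dist(q,p)$ yields $d'(0)=-\re(u\cdot\overline{\gamma_0'(q,p)})$, hence $\frac{d}{ds}\cot(d(s))|_{s=0}=\csc^2 d\cdot\re(u\cdot\overline{\gamma_0'(q,p)})$. Differentiating $\gamma_0'(p,q)=\cot d\,[q/w-p]$ term by term and using $q/w-p=\tan d\cdot\gamma_0'(p,q)$ then produces
\[G'(0)=\csc d\sec d\cdot\re(u\cdot\overline{\gamma_0'(q,p)})\gamma_0'(p,q)+\frac{\cot d}{w}\left[u-\frac{u\cdot\bar p}{w}\,q\right].\]

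The key algebraic step is the identity $u\cdot\overline{\gamma_0'(q,p)}=\cot d\cdot(u\cdot\bar p)/w$, obtained from $\gamma_0'(q,p)=\cot d\,[p/\bar w-q]$ and $u\cdot\bar q=0$. Substituting this back and expanding $\gamma_0'(q,p)$ in the bracketed expression, the coefficient of $u$ vanishes by $|w|=\cos d$, the coefficient of $q$ vanishes by $\csc^2 d-\cot^2 d=1$, and the coefficient of $p$ vanishes again by $|w|=\cos d$; only a $\gamma_0'(p,q)$-multiple remains, which combines with the $\re$-term of $G'(0)$ via $\re(\cdot)+i\im(\cdot)=(\cdot)$ to produce the $-i\sec d\csc d\cdot\im(u\cdot\overline{\gamma_0'(q,p)})\gamma_0'(p,q)$ correction, establishing \eqref{eq:geodesic_vector_derivative_q}. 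The main obstacle is precisely this bookkeeping: the answer must lie in $\Tan(\mathbb{CP}^n,p)$, so the $p$- and $q$-components must cancel exactly, and a single miscollection of coefficients would leave spurious terms.

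For \eqref{eq:geodesic_derivative_q}, I would set $g(s):=\gamma_t(p,q(s))=\cos t\cdot p+\sin t\cdot\gamma_0'(p,q(s))$, a $\mathbb{C}^{n+1}$-valued representative satisfying $|g(0)|=1$, so that the projection formula \eqref{eq:vector_derivative} collapses to $D_{q,u}\gamma_t(p,q)=g'(0)-(g'(0)\cdot\overline{g(0)})g(0)$ with $g'(0)=\sin t\cdot D_{q,u}\gamma_0'(p,q)$. The auxiliary identities $u\cdot\overline{\gamma_0'(p,q)}=-w(u\cdot\overline{\gamma_0'(q,p)})$ and $\gamma_0'(q,p)\cdot\overline{\gamma_0'(p,q)}=-\cos d\cdot w/|w|$ force the first summand of \eqref{eq:geodesic_vector_derivative_q} to be hermitian-orthogonal to $\overline{\gamma_0'(p,q)}$, reducing $g'(0)\cdot\overline{g(0)}$ to $-i\sin^2 t\sec d\csc d\cdot\im(u\cdot\overline{\gamma_0'(q,p)})$. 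Regrouping the resulting $p$- and $\gamma_0'(p,q)$-components via $\sin^3 t-\sin t=-\sin t\cos^2 t$ and $\gamma_t'(p,q)=-\sin t\cdot p+\cos t\cdot\gamma_0'(p,q)$, together with $2\sin t\cos t=\sin 2t$ and $2\sin d\cos d=\sin 2d$, then yields \eqref{eq:geodesic_derivative_q}.
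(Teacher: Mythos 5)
Your proof is correct and follows essentially the same route as the paper: differentiate the explicit formula for $\gamma_0'(p,q)$ along a geodesic curve in $q$, use Lemma~\ref{lem:point_distance_derivative} together with the identity $u\cdot\overline{\gamma_0'(q,p)}=\cot(\dist(p,q))(u\cdot\bar p)/(q\cdot\bar p)$ and the substitution $q/(q\cdot\bar p)-p=\tan(\dist(p,q))\,\gamma_0'(p,q)$, and then for \eqref{eq:geodesic_derivative_q} exploit $\gamma_t(p,q)=\cos t\,p/|p|+\sin t\,\gamma_0'(p,q)$ and the decomposition $\gamma_0'(p,q)=\sin t\,\gamma_t+\cos t\,\gamma_t'$ before applying \eqref{eq:vector_derivative}; your normalization $|p|=|q|=1$ and the explicit orthogonal-projection formula for a unit-length lift are just cleaner bookkeeping for what the paper does via the $h$-expansion. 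One tiny point worth spelling out in your last step: to reduce $g'(0)\cdot\overline{g(0)}$ to the stated $\im$-term you need $\hat u\cdot\bar p=0$ in addition to $\hat u\cdot\overline{\gamma_0'(p,q)}=0$; this is immediate since $D_{q,u}\gamma_0'(p,q)$ and $\gamma_0'(p,q)$ both lie in $\Tan(\mathbb{CP}^n,p)$, so their difference (a multiple of $\hat u$) is annihilated by $\bar p$ as well.
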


\begin{rem}
  Since \eqref{eq:geodesic_inner_products} implies that $\Tan(\mathbb{CP}^n,p)$ is isometric with $\mathbb{C}^n$ under the Euclidean metric, we interpret \eqref{eq:geodesic_vector_derivative_q} in the Euclidean sense.
\end{rem}

\begin{proof}
  Using \eqref{eq:point_distance_derivative}, we have
  \begin{multline*}
    D_{q,u}\gamma_0'(p,q)=\sec(\dist(p,q))\csc(\dist(p,q))\left<u,\gamma_0'(q,p)\right>_\omega\gamma_0'(p,q)\\
    +\cot(\dist(p,q))\left(\frac{\abs{p}\abs{q}u}{q\cdot\bar{p}}-\frac{\abs{p}\abs{q}(u\cdot\bar{p})q}{(q\cdot\bar{p})^2}\right).
  \end{multline*}
  We can substitute
  \begin{equation}
  \label{eq:q_in_terms_of_p}
    \frac{\abs{p}q}{q\cdot\bar{p}}=\frac{p}{\abs{p}}+\tan(\dist(p,q))\gamma_0'(p,q).
  \end{equation}
  Since $u\cdot\bar{q}=0$ we will also use \eqref{eq:q_in_terms_of_p} with $p$ and $q$ reversed to obtain $\frac{\abs{q}u\cdot\bar{p}}{q\cdot\bar{p}}=\tan(\dist(p,q))u\cdot\overline{\gamma_0'(q,p)}$.  Hence
  \begin{multline*}
    D_{q,u}\gamma_0'(p,q)=\sec(\dist(p,q))\csc(\dist(p,q))\left<u,\gamma_0'(q,p)\right>_\omega\gamma_0'(p,q)\\
    +\cot(\dist(p,q))\frac{\abs{p}\abs{q}u}{q\cdot\bar{p}}-\left(u\cdot\overline{\gamma_0'(q,p)}\right)\left(\frac{p}{\abs{p}}+\tan(\dist(p,q))\gamma_0'(p,q)\right).
  \end{multline*}
  To simplify notation we will define the orthogonal projection
  \[
    \hat{u}=\frac{\abs{q\cdot\bar{p}}}{q\cdot\bar{p}}\left(u-(u\cdot\overline{\gamma_0'(q,p)})\gamma_0'(q,p)\right),
  \]
  but this makes it helpful to note that from \eqref{eq:q_in_terms_of_p}, we have
  \begin{equation}
  \label{eq:q_switch_p}
    \gamma_0'(q,p)=\frac{q\cdot\bar{p}}{\abs{p}\abs{q}}\left(\frac{p}{\abs{p}}\tan(\dist(p,q))-\gamma_0'(p,q)\right),
  \end{equation}
  so $u=\frac{q\cdot\bar{p}}{\abs{q}\abs{p}}\left(\sec(\dist(p,q))\hat{u}+(u\cdot\overline{\gamma_0'(q,p)})\left(\frac{p}{\abs{p}}\tan(\dist(p,q))-\gamma_0'(p,q)\right)\right)$ and
  \begin{multline*}
    D_{q,u}\gamma_0'(p,q)=\sec(\dist(p,q))\csc(\dist(p,q))\left<u,\gamma_0'(q,p)\right>_\omega\gamma_0'(p,q)\\
    +\csc(\dist(p,q))\hat{u}-\left(u\cdot\overline{\gamma_0'(q,p)}\right)\left(\tan(\dist(p,q))+\cot(\dist(p,q))\right)\gamma_0'(p,q).
  \end{multline*}
  Trigonometric identities and \eqref{eq:geodesic_inner_products} can be used to obtain \eqref{eq:geodesic_vector_derivative_q}.

  Let $w\in\mathbb{CP}^n$ satisfy $0<\dist(p,w)<\frac{\pi}{2}$ and $\frac{u}{\abs{u}}=\gamma_0'(p,w)$.  We have
  \[
    \gamma_{h\abs{u}}(q,w)=\frac{q}{\abs{q}}+hu+O(h^2),
  \]
  so using \eqref{eq:geodesic_vector_derivative_q} we have
  \begin{multline*}
    \gamma_0'(p,\gamma_{h\abs{u}}(q,w))=\gamma_0'(p,q)+h\csc(\dist(p,q))\hat{u}\\
    -h\csc(\dist(p,q))\sec(\dist(p,q))i\im\left(u\cdot\overline{\gamma_0'(q,p)}\right)\gamma_0'(p,q)+O(h^2).
  \end{multline*}
  This give us
  \begin{multline*}
    \gamma_t(p,\gamma_{h\abs{u}}(q,w))=\gamma_t(p,q)+h\sin t\csc(\dist(p,q))\hat{u}\\
    -h\sin t\csc(\dist(p,q))\sec(\dist(p,q))i\im\left(u\cdot\overline{\gamma_0'(q,p)}\right)\gamma_0'(p,q)+O(h^2).
  \end{multline*}
  Since $\gamma_0'(p,q)=\sin t\gamma_t(p,q)+\cos t\gamma_t'(p,q)$, we have
  \begin{multline*}
    \gamma_t(p,\gamma_{h\abs{u}}(q,w))=h\sin t\csc(\dist(p,q))\hat{u}\\
    +\left(1-h\sin^2 t\sec(\dist(p,q))\csc(\dist(p,q))i\im\left(u\cdot\overline{\gamma_0'(q,p)}\right)\right)\gamma_t(p,q)\\
    -h\sin 2t\csc(2\dist(p,q))i\im\left(u\cdot\overline{\gamma_0'(q,p)}\right)\gamma_t'(p,q)+O(h^2).
  \end{multline*}
  Hence, \eqref{eq:geodesic_derivative_q} follows from \eqref{eq:vector_derivative}.
\end{proof}

\begin{cor}
\label{cor:lipschitz_geodesic}
  Let $p,q\in\mathbb{CP}^n$ satisfy $0<\dist(p,q)<\frac{\pi}{2}$.  Then
  \begin{equation}
  \label{eq:lipschitz_geodesic_gradient}
    \abs{D_q\gamma_0'(p,q)}\leq\sec(\dist(p,q))\csc(\dist(p,q)).
  \end{equation}
  For any $t\in\mathbb{R}$,
  \begin{equation}
  \label{eq:lipschitz_geodesic_q}
    \abs{D_q\gamma_t(p,q)}_\omega\leq\max\set{\abs{\sin 2t}\csc(2\dist(p,q)),\abs{\sin t}\csc(\dist(p,q))},
  \end{equation}
  and
  \begin{multline}
  \label{eq:lipschitz_geodesic_p}
    \abs{D_p\gamma_t(p,q)}_\omega\\
    \leq\max\set{\abs{\sin 2(t-\dist(p,q))}\csc(2\dist(p,q)),\abs{\sin(t-\dist(p,q))}\csc(\dist(p,q))}.
  \end{multline}
\end{cor}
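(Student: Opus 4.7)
The plan is to read each of the three inequalities directly off the explicit derivative formulas in Lemma \ref{lem:derivatives}, after decomposing the input vector $u$ into its component parallel to $\gamma_0'(q,p)$ (or, for the third bound, $\gamma_0'(p,q)$) and the orthogonal complement. Throughout I write $\alpha = u \cdot \overline{\gamma_0'(q,p)}$ and $u_\perp = u - \alpha \gamma_0'(q,p)$.

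For \eqref{eq:lipschitz_geodesic_gradient} I start from \eqref{eq:geodesic_vector_derivative_q}. Its two summands are $\csc(\dist(p,q))$ times a unit-modulus scalar times $u_\perp$, and $-i\sec(\dist(p,q))\csc(\dist(p,q))(\im \alpha)\gamma_0'(p,q)$. The first step is to check that these two terms are orthogonal in $\mathbb{C}^{n+1}$: using \eqref{eq:q_switch_p} together with $u\cdot\bar q = 0$, one verifies $u_\perp \cdot \overline{\gamma_0'(p,q)} = 0$. Adding the squared Euclidean norms and maximizing over the decomposition $|u|^2 = |u_\perp|^2 + (\re\alpha)^2 + (\im\alpha)^2$ (note the coefficient of $(\re\alpha)^2$ is zero) yields \eqref{eq:lipschitz_geodesic_gradient}.

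An analogous argument applied to \eqref{eq:geodesic_derivative_q} gives \eqref{eq:lipschitz_geodesic_q}. The orthogonality now required is $u_\perp \cdot \overline{\gamma_t'(p,q)} = 0$, which reduces via the formula $\gamma_t'(p,q) = -\sin t \cdot p/|p| + \cos t \cdot \gamma_0'(p,q)$ to the two identities $u_\perp \cdot \bar p = 0$ and $u_\perp \cdot \overline{\gamma_0'(p,q)} = 0$ (both obtained using \eqref{eq:q_switch_p}). The squared norm then decomposes as $\sin^2 t \csc^2(\dist(p,q))|u_\perp|^2 + \sin^2 2t \csc^2(2\dist(p,q))(\im\alpha)^2$, and the maximization over $|u|^2 = |u_\perp|^2 + (\re\alpha)^2 + (\im\alpha)^2$ produces \eqref{eq:lipschitz_geodesic_q}.

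For \eqref{eq:lipschitz_geodesic_p} I would exploit the $\mathbb{CP}^n$-level identity $[\gamma_t(p,q)] = [\gamma_{\dist(p,q)-t}(q,p)]$ and differentiate in $p$ via the chain rule. This yields two contributions: the derivative of $\gamma_s(q,\cdot)$ at fixed $s = \dist(p,q)-t$, given by \eqref{eq:geodesic_derivative_q} after swapping the roles of $p$ and $q$ and sending $t \mapsto \dist(p,q)-t$; and $\gamma_{\dist(p,q)-t}'(q,p) \cdot D_{p,u}\dist(p,q)$, computed via \eqref{eq:point_distance_derivative}. Assembling these and re-applying the orthogonal-decomposition argument should reproduce the bound with $t - \dist(p,q)$ in place of $t$. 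The main obstacle here is precisely that both of these contributions lie along $\gamma_{\dist(p,q)-t}'(q,p)$, so their real scalar coefficients must be tracked carefully to ensure that their combined squared magnitude stays controlled by the maximum in \eqref{eq:lipschitz_geodesic_p}, rather than producing an extra summand.
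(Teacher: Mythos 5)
Your treatment of \eqref{eq:lipschitz_geodesic_gradient} and \eqref{eq:lipschitz_geodesic_q} is correct and matches the paper's argument: the paper also reads the squared norm off \eqref{eq:geodesic_vector_derivative_q} and \eqref{eq:geodesic_derivative_q} directly, using $1-\abs{u\cdot\overline{\gamma_0'(q,p)}}^2 = \abs{u_\perp}^2$ when $\abs{u}=1$ and then maximizing, which is precisely your orthogonal decomposition of $u$.

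For \eqref{eq:lipschitz_geodesic_p}, the worry you flag at the end is not just a technicality to be dispatched --- it is fatal to the bound as written, and the paper's one-line proof (``Since $[\gamma_t(p,q)]=[\gamma_{\dist(p,q)-t}(q,p)]$, the Lipschitz constant in $p$ can easily be derived'') silently drops the chain-rule term. Writing $d=\dist(p,q)$, $s=d-t$, $\alpha = u\cdot\overline{\gamma_0'(p,q)}$, and $u_\perp = u-\alpha\gamma_0'(p,q)$, and combining $D_{p,u}\gamma_s(q,p)$ at fixed $s$ (from the swapped \eqref{eq:geodesic_derivative_q}) with $\gamma_s'(q,p)\,D_{p,u}\dist(p,q) = -\re(\alpha)\,\gamma_s'(q,p)$ (from \eqref{eq:point_distance_derivative}), one gets
\[
D_{p,u}\gamma_t(p,q) = \sin s\,\csc d\,\tfrac{\abs{p\cdot\bar q}}{p\cdot\bar q}\,u_\perp
 - \bigl(\re\alpha + i\,\sin 2s\,\csc(2d)\,\im\alpha\bigr)\gamma_s'(q,p),
\]
and, since $\re\alpha$ and $i\sin 2s\csc(2d)\im\alpha$ are orthogonal as complex scalars, the squared norm is
\[
\sin^2 s\,\csc^2 d\,\abs{u_\perp}^2 + (\re\alpha)^2 + \sin^2 2s\,\csc^2(2d)\,(\im\alpha)^2.
\]
Only a \emph{part} of the first contribution lies along $\gamma_s'(q,p)$ (the $\im\alpha$ piece), contrary to your phrasing, but the conclusion stands: there is an extra $(\re\alpha)^2$ term with coefficient $1$, so the correct operator norm is $\max\set{1,\ \abs{\sin(t-d)}\csc d,\ \abs{\sin 2(t-d)}\csc(2d)}$, not the maximum of the last two alone. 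The cleanest counterexample is $t=d$: both terms on the right of \eqref{eq:lipschitz_geodesic_p} vanish, yet the displacement of $\gamma_d(p,q)$ along the geodesic direction when $p$ moves toward $q$ gives $\abs{D_{p,u}\gamma_d(p,q)}_\omega = 1$ for $u=\gamma_0'(p,q)$. So do not try to make the extra summand go away --- it is really there, and \eqref{eq:lipschitz_geodesic_p} should be corrected to include $1$ in the maximum. Fortunately, every downstream use of \eqref{eq:lipschitz_geodesic_p} in the paper (Lemmas \ref{lem:projection_to_boundary}, \ref{lem:neighborhood_base}, \ref{lem:surjection}) evaluates it at or near $t=0$, where the stated bound already equals $1$, or only invokes a qualitative Lipschitz property, so the error is harmless for the paper's main results.
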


\begin{proof}
  Suppose $u\in\Tan(\mathbb{CP}^n,q)$ satisfies $\abs{u}=1$.  From \eqref{eq:geodesic_vector_derivative_q}, we can compute
  \begin{multline*}
    \abs{D_{q,u}\gamma_0'(p,q)}^2=\csc^2(\dist(p,q))\left(1-\abs{u\cdot\overline{\gamma_0'(q,p)}}^2\right)\\
    +\sec^2(\dist(p,q))\csc^2(\dist(p,q))\left(\im\left(u\cdot\overline{\gamma_0'(q,p)}\right)\right)^2,
  \end{multline*}
  and this is bounded above by $\sec^2(\dist(p,q))\csc^2(\dist(p,q))$.

  From \eqref{eq:geodesic_derivative_q}, we can compute
  \begin{multline*}
    \abs{D_{q,u}\gamma_t(p,q)}^2=\sin^2 t\csc^2(\dist(p,q))\left(1-\abs{u\cdot\overline{\gamma_0'(q,p)}}^2\right)\\
    +\sin^2 2t\csc^2(2\dist(p,q))\left(\im\left(u\cdot\overline{\gamma_0'(q,p)}\right)\right)^2,
  \end{multline*}
  and this is bounded above by $\max\set{\sin^2 t\csc^2(\dist(p,q)),\sin^2 2t\csc^2(2\dist(p,q))}$.  Since $[\gamma_t(p,q)]=[\gamma_{\dist(p,q)-t}(q,p)]$, the Lipschitz constant in $p$ can easily be derived.

\end{proof}

Although $i\ddbar\log\abs{w}$ is a well-defined $(1,1)$-form, $\log\abs{w}$ is not a well-defined function on $\mathbb{CP}^n$.  When working locally, we will use a special class of functions to generate the Fubini-Study metric.  We will also find it helpful to have a family of holomorphic isometries that is compatible with our strictly plurisubharmonic function, as follows:
\begin{lem}
\label{lem:weight_and_map}
  Let $p,q\in\mathbb{CP}^n$ satisfy $0<\dist(p,q)<\frac{\pi}{2}$.  On the set
  \[
    \dom\mu_{p,q}=\set{z\in\mathbb{CP}^n:\dist\left(z,\frac{p}{\abs{p}}\pm\gamma_0'(p,q)\right)<\frac{\pi}{2}},
  \]
  there exists a real-valued function $\mu_{p,q}$ and a family of holomorphic isometries $\phi_t^{p,q}(z)$ preserving $\dom\mu_{p,q}$ such that
  \begin{enumerate}
    \item \label{item:w_domain} If $\dist(z,\gamma(p,q))<\frac{\pi}{4}$, then $z\in\dom\mu_{p,q}$.

    \item \label{item:w_kahler_generator} $\omega=i\ddbar\mu_{p,q}$ on $\dom\mu_{p,q}$.

    \item \label{item:w_special_case} $\mu_{p,q}(\gamma_t(p,q))=0$ for all $t$.

    \item \label{item:w_lower_bound} $\mu_{p,q}(z)\geq\log\sec\dist(z,\gamma(p,q))$ for all $z\in\dom\mu_{p,q}$.

    \item \label{item:w_invariance} If $\varphi$ is a holomorphic isometry of $\mathbb{CP}^n$, then $\mu_{p,q}(z)=\mu_{\varphi(p),\varphi(q)}(\varphi(z))$.  If $s_1,s_2\in\mathbb{R}$ satisfy $0<\abs{s_2-s_1}<\frac{\pi}{2}$, then $\mu_{p,q}(z)=\mu_{\gamma_{s_1}(p,q),\gamma_{s_2}(p,q)}(z)$.

    \item \label{item:w_second_derivative} For $z\in\dom\mu_{p,q}$, we have
        \begin{equation}
          \label{eq:w_second_derivative}
          \mu_{p,q}(z)=\frac{1}{2}(\dist(z,p))^2\left(1-\re\left(\left(\gamma_0'(p,z)\cdot\overline{\gamma_0'(p,q)}\right)^2\right)\right)
          +O((\dist(p,z))^3).
        \end{equation}

    \item \label{item:w_first_derivative} For $u_p\in\Tan(\mathbb{CP}^n,p)$, $u_q\in\Tan(\mathbb{CP}^n,q)$, and $z\in\dom\mu_{p,q}$, we have
        \begin{multline}
          \label{eq:w_first_derivative}
          (D_{p,u_p}+D_{q,u_q})\mu_{p,q}(z)\\=-\dist(z,p)
          \re\left(\gamma_0'(p,z)\cdot\overline{u_p}-\left(\gamma_0'(p,z)\cdot\overline{\gamma_0'(p,q)}\right)\left(u_p\cdot\overline{\gamma_0'(p,q)}\right)\right)\\
          +O((\dist(z,p))^2).
        \end{multline}

    \item \label{item:m_invariance} If $\varphi$ is a holomorphic isometry of $\mathbb{CP}^n$, then $\varphi(\phi_t^{p,q}(z))=\phi_t^{\varphi(p),\varphi(q)}(\varphi(z))$.  If $s_1,s_2\in\mathbb{R}$ satisfy $0<s_2-s_1<\frac{\pi}{2}$, then $\phi_t^{p,q}(z)=\phi_t^{\gamma_{s_1}(p,q),\gamma_{s_2}(p,q)}(z)$

    \item \label{item:m_special_cases} $\phi_0^{p,q}(z)=z$ for all $z\in\dom\mu_{p,q}$ and $\phi_t^{p,q}(\gamma_s(p,q))=\gamma_{t+s}(p,q)$ for all $s,t\in\mathbb{R}$.

    \item \label{item:m_level_curves} $\mu_{p,q}(\phi_t^{p,q}(z))=\mu_{p,q}(z)$ for all $z\in\dom\mu_{p,q}$ and $t\in\mathbb{R}$.

    \item \label{item:m_Lipschitz_in_p} If $p,q\in\mathbb{CP}^n$ satisfy $\dist(p,q)=\frac{\pi}{4}$ and $u_p\in\Tan(\mathbb{CP}^n,p)$ and $u_q\in\Tan(\mathbb{CP}^n,q)$ satisfy $(D_{p,u_p}+D_{q,u_q})\dist(p,q)=0$, then for all $0\leq t\leq\frac{\pi}{4}$ we have
        \begin{equation}
        \label{eq:m_Lipschitz_in_p}
          \left.\abs{(D_{p,u_p}+D_{q,u_q})\phi_t^{p,q}(z)}_\omega\right|_{z=p}\leq\sqrt{6}\sin t(\abs{u_p}_\omega+\abs{u_q}_\omega)
        \end{equation}

  \end{enumerate}
\end{lem}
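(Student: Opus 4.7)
After applying a holomorphic isometry of $\mathbb{CP}^n$, I may assume $p=[e_1]$ and $q=[\cos d\cdot e_1+\sin d\cdot e_2]$ with $d=\dist(p,q)\in(0,\pi/2)$, where $\{e_1,\ldots,e_{n+1}\}$ is the standard orthonormal basis of $\mathbb{C}^{n+1}$. The geodesic is then $\gamma_t(p,q)=[\cos t\cdot e_1+\sin t\cdot e_2]$, a real great circle in the $(e_1,e_2)$-plane. I define
\[
  \mu_{p,q}(z)=\log\abs{z}-\tfrac{1}{2}\log\abs{(z^1)^2+(z^2)^2},
\]
which descends to $\mathbb{CP}^n$ off the singular locus $V=\{(z^1)^2+(z^2)^2=0\}$, and let $\phi_t^{p,q}$ be the holomorphic isometry acting in these coordinates as the real rotation $R_t$ in the $(e_1,e_2)$-plane, fixing all other basis vectors. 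Both formulas are invariant under the residual $U(n-1)$ symmetry of the adapted frame, so $\mu_{p,q}$ and $\phi_t^{p,q}$ are coordinate-free, and items (5) and (8) follow from equivariance together with the observation that $(\gamma_{s_1}(p,q),\gamma_{s_2}(p,q))$ produces the same oriented geodesic.

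Items (2), (3), (9), and (10) are then mechanical. Since $i\ddbar\log\abs{z}=\omega$ and $\log\abs{(z^1)^2+(z^2)^2}$ is pluriharmonic off $V$ (being the modulus of a holomorphic polynomial), $i\ddbar\mu_{p,q}=\omega$ there. On the geodesic $\abs{\gamma_t}=1$ and $(\gamma_t^1)^2+(\gamma_t^2)^2=1$, giving $\mu_{p,q}(\gamma_t)=0$; $R_0=\mathrm{id}$ and $R_t(\cos s\cdot e_1+\sin s\cdot e_2)=\cos(s+t)\cdot e_1+\sin(s+t)\cdot e_2$ give (9); and $R_t$ preserves both $\abs{z}$ (unitary) and the complexified sum of squares $(z^1)^2+(z^2)^2$ (the classical invariance under real rotations), giving (10). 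For items (1) and (4), set $a=\abs{z^1}^2$, $b=\abs{z^2}^2$, $c=\re(\bar z^1 z^2)$; a direct calculation gives $\abs{(z^1)^2+(z^2)^2}^2=(a-b)^2+4c^2$, while maximizing $\abs{\gamma_s\cdot\bar z}^2$ over $s$ gives $\tfrac{1}{2}(a+b)+\tfrac{1}{2}\sqrt{(a-b)^2+4c^2}$. The inequality $(a+b)^2\geq(a-b)^2+4c^2$ (equivalent to $ab\geq c^2$, i.e.\ Cauchy--Schwarz) yields $\mu_{p,q}(z)\geq\log\sec\dist(z,\gamma)$, and evaluating on $V$ (where $a=b$, $c=0$) shows $V$ sits at distance exactly $\pi/4$ from $\gamma$, proving (1).

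For the Taylor expansions (6) and (7), parameterize $z=\cos r\cdot e_1+\sin r\cdot u$ with $r=\dist(p,z)$ and $u=\gamma_0'(p,z)$, and let $w=u\cdot\bar e_2=\gamma_0'(p,z)\cdot\overline{\gamma_0'(p,q)}$. A short calculation gives
\[
  \abs{(z^1)^2+(z^2)^2}^2=\cos^4 r+2\re(w^2)\sin^2 r\cos^2 r+\abs{w}^4\sin^4 r,
\]
and expanding the logarithm through $O(r^2)$ produces (6). For (7), since $\log\abs{z}$ is $(p,q)$-independent, the derivative of $\mu_{p,q}$ comes entirely from the $(p,q)$-dependence of the adapted frame acting on $(z^1,z^2)$; applying Lemma~\ref{lem:derivatives} to the change in $e_2$ and Lemma~\ref{lem:point_distance_derivative} to the change in $r$ yields the claimed first-order expression, and the absence of an explicit $u_q$ contribution at leading order reflects that the $q$-dependence of the frame enters only in directions perpendicular to $e_2$, which perturb $\mu_{p,q}$ near $p$ only at quadratic order in $r$.

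The main obstacle is item (11). Since $\phi_t^{p,q}(p)=\gamma_t(p,q)$ by (9) at $s=0$, coupling $z=p$ to the variation yields $\phi_t^{p+hu_p,q+hu_q}(p+hu_p)=\gamma_t(p+hu_p,q+hu_q)$; differentiating at $h=0$ and using that $(d\phi_t^{p,q})_p$ is a linear isometry, the parametric derivative I want to bound satisfies
\[
  (D_{p,u_p}+D_{q,u_q})\phi_t^{p,q}(z)\big|_{z=p}=(D_{p,u_p}+D_{q,u_q})\gamma_t(p,q)-(d\phi_t^{p,q})_p(u_p).
\]
At $\dist(p,q)=\pi/4$ and $t\in[0,\pi/4]$, Corollary~\ref{cor:lipschitz_geodesic} gives $\abs{D_{q,u_q}\gamma_t}_\omega\leq 2\sin t\abs{u_q}_\omega$; the $p$-derivative is not individually $O(\sin t)$, but rewriting $\gamma_t(p,q)=\gamma_{d-t}(q,p)$ expresses it as an $R_t$-rotation of $u_p$ (which cancels against $(d\phi_t^{p,q})_p(u_p)=R_tu_p$) plus a correction governed by \eqref{eq:geodesic_derivative_q}. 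Combining this cancellation with the distance-preservation constraint $(D_{p,u_p}+D_{q,u_q})\dist(p,q)=0$ from Lemma~\ref{lem:point_distance_derivative}, which ties the radial components of $u_p$ and $u_q$ together, and decomposing each of $u_p,u_q$ into tangential, complex-perpendicular, and real-perpendicular pieces relative to $\gamma_0'(p,q)$ and $\gamma_0'(q,p)$, reduces (11) to a finite-dimensional linear estimate whose worst case produces the constant $\sqrt{6}$. Careful bookkeeping in this final optimization is the hardest part of the proof.
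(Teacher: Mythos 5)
Your construction is precisely the paper's, just written in an adapted orthonormal frame: with $p=[e_1]$ and $\gamma_0'(p,q)=e_2$, the paper's $\alpha,\beta$ give $z\cdot\bar\alpha=\tfrac1{\sqrt2}(z^1-iz^2)$ and $z\cdot\bar\beta=\tfrac1{\sqrt2}(z^1+iz^2)$, so your $\log\abs{z}-\tfrac12\log\abs{(z^1)^2+(z^2)^2}$ equals the paper's $\log\abs{z}-\tfrac12\log(\sqrt2\abs{z\cdot\bar\alpha})-\tfrac12\log(\sqrt2\abs{z\cdot\bar\beta})$, and your rotation $R_t$ is the paper's eigenphase map $\phi_t^{p,q}$. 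Items (1)--(10) are therefore the same argument in coordinates; they read correctly, with two small remarks. First, the complement of your $V$ is the paper's \emph{working} domain $\set{z:z\cdot\bar\alpha\neq0,\ z\cdot\bar\beta\neq0}$; you should flag that this does not literally match the displayed definition of $\dom\mu_{p,q}$ in the lemma (which writes $\frac{p}{\abs{p}}\pm\gamma_0'(p,q)$ rather than $\frac{p}{\abs{p}}\pm i\gamma_0'(p,q)$). Second, $V$ sits at distance $\geq\frac{\pi}{4}$ from the geodesic, not exactly $\frac{\pi}{4}$ (only $[\alpha]$ and $[\beta]$ themselves realize the minimum), but $\geq$ is all that (1) requires.

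The genuine gap is item (11). Your chain-rule reformulation
\[
(D_{p,u_p}+D_{q,u_q})\phi_t^{p,q}(z)\big|_{z=p}=(D_{p,u_p}+D_{q,u_q})\gamma_t(p,q)-(d\phi_t^{p,q})_p(u_p)
\]
is correct, and the observation from Corollary~\ref{cor:lipschitz_geodesic} that $\abs{D_{q,u_q}\gamma_t}_\omega\leq2\sin t\abs{u_q}_\omega$ is right. But from there you only sketch the plan: a ``cancellation'' of $R_tu_p$ against part of $D_p\gamma_t$, use of the constraint $(D_{p,u_p}+D_{q,u_q})\dist(p,q)=0$ to pair radial components, a decomposition into tangential and perpendicular pieces, and a final ``finite-dimensional linear estimate whose worst case produces the constant $\sqrt6$'' that you explicitly declare the hardest part and do not perform. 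The cancellation is only partial (at $t=\dist(p,q)$ the term $R_tu_p$ survives in full and is balanced against $u_q$), and nothing in your outline pins down the constant; it could in principle come out larger than $\sqrt6$. The paper closes this by an explicit calculation: writing $u_p\cdot\overline{\gamma_0'(p,q)}=x+iy_p$, $u_q\cdot\overline{\gamma_0'(q,p)}=-x+iy_q$, computing $\alpha_0',\beta_0'$ from \eqref{eq:p_cdot_alpha_derivative}--\eqref{eq:p_cdot_beta_derivative}, deriving a closed form for $(D_{p,u_p}+D_{q,u_q})\phi_t^{p,q}(p)$, and bounding its norm term by term using $0\leq\frac{1-\cos t+\sin t}{\sin t}\leq\sqrt2$ on $(0,\frac{\pi}{4}]$. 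You need to actually carry out that bookkeeping (either in your chain-rule framework or the paper's direct one) before (11) is established.
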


\begin{rem}
  In the Euclidean case, we could take $\mu_{p,q}(z)=\frac{1}{2}\abs{z}^2-\frac{1}{2}\re\left(z\cdot\frac{q-p}{\abs{q-p}}\right)^2$ and $\phi_t^{p,q}(z)=z+t\frac{q-p}{\abs{q-p}}$ and obtain the same result.  Note that all trajectories of $\phi_t^{p,q}(z)$ are geodesic in the Euclidean case, but this is impossible in the projective case.
\end{rem}

\begin{proof}
  Set
  \begin{align*}
    \alpha&=\frac{1}{\sqrt{2}}\left(\frac{p}{\abs{p}}+i\gamma_0'(p,q)\right),\\ \beta&=\frac{1}{\sqrt{2}}\left(\frac{p}{\abs{p}}-i\gamma_0'(p,q)\right).
  \end{align*}
  Observe that $\alpha\cdot\bar\beta=0$ and $\abs{\alpha}^2=\abs{\beta}^2=1$.  In this notation, we have
  \begin{equation}
  \label{eq:gamma_using_alpha_beta}
    \gamma_t(p,q)=\frac{1}{\sqrt{2}}(\alpha e^{-it}+\beta e^{it}).
  \end{equation}
  Furthermore,
  \[
    \cos^2\dist(z,\gamma_t(p,q))=\frac{1}{2}\abs{\alpha\cdot\frac{\bar{z}}{\abs{z}}}^2+\re\left(\left(\alpha\cdot\frac{\bar z}{\abs{z}}\right)\left(\frac{z}{\abs{z}}\cdot\bar\beta\right)e^{-2it}\right)+\frac{1}{2}\abs{\beta\cdot\frac{\bar{z}}{\abs{z}}}^2,
  \]
  This is maximized when we choose $t$ satisfying $\re\left(\left(\alpha\cdot\frac{\bar z}{\abs{z}}\right)\left(\frac{z}{\abs{z}}\cdot\bar\beta\right)e^{-2it}\right)=\abs{\alpha\cdot\frac{\bar{z}}{\abs{z}}}\abs{\beta\cdot\frac{\bar{z}}{\abs{z}}}$, so we have
  \begin{equation}
  \label{eq:distance_to_geodesic}
    \dist(z,\gamma(p,q))=\arccos\left(\frac{1}{\sqrt{2}}\left(\abs{\alpha\cdot\frac{\bar{z}}{\abs{z}}}+\abs{\beta\cdot\frac{\bar{z}}{\abs{z}}}\right)\right).
  \end{equation}
  If $\dist(z,\alpha)=\frac{\pi}{2}$, then $z\cdot\bar\alpha=0$, so
  \[
    \dist(z,\gamma(p,q))=\arccos\left(\frac{1}{\sqrt{2}}\abs{\beta\cdot\frac{\bar{z}}{\abs{z}}}\right)\geq\frac{\pi}{4}.
  \]
  Similarly, if $\dist(z,\alpha)=\frac{\pi}{2}$, then $\dist(z,\gamma(p,q))\geq\frac{\pi}{4}$, so \eqref{item:w_domain} follows.

  For $z\in\dom\mu_{p,q}$, define
  \[
    \mu_{p,q}(z)=-\frac{1}{2}\log(2\cos\dist(z,\alpha)\cos\dist(z,\beta)).
  \]
  Observe that we can write $\mu_{p,q}(z)=\log\abs{z}-\frac{1}{2}\log(\sqrt{2}\abs{z\cdot\bar\alpha})-\frac{1}{2}\log(\sqrt{2}\abs{z\cdot\bar\beta})$.  Since $z\cdot\bar\alpha$ and $z\cdot\bar\beta$ are both holomorphic functions, $\log\abs{z\cdot\bar\alpha}$ and $\log\abs{z\cdot\bar\beta}$ are pluriharmonic, so we have \eqref{item:w_kahler_generator}.  Substituting \eqref{eq:gamma_using_alpha_beta} into $\mu_{p,q}$ will give us \eqref{item:w_special_case}.  Since \eqref{eq:distance_to_geodesic} coupled with the classical inequality $\frac{a+b}{2}\geq\sqrt{ab}$ implies
  \[
    \cos\dist(z,\gamma(p,q))\geq\sqrt{2\abs{\alpha\cdot\frac{\bar{z}}{\abs{z}}}\abs{\beta\cdot\frac{\bar{z}}{\abs{z}}}}=e^{-\mu_{p,q}(z)},
  \]
  we obtain \eqref{item:w_lower_bound}.

  Every holomorphic isometry of $\mathbb{CP}^n$ can be represented by a unitary map $U$ on $\mathbb{C}^{n+1}$.  The first part of \eqref{item:w_invariance} follows immediately since $U z\cdot\overline{U \alpha}=z\cdot\bar\alpha$, $U z\cdot\overline{U \beta}=z\cdot\bar\beta$, $U\frac{p}{\abs{p}}=\frac{U p}{\abs{U p}}$, and $U\gamma_0'(p,q)=\gamma_0'(U p,U q)$.  Now, suppose $s_1,s_2\in\mathbb{R}$ satisfy $0<\abs{s_2-s_1}<\frac{\pi}{2}$, and set $\hat{p}=\gamma_{s_1}(p,q)$ and $\hat{q}=\gamma_{s_2}(p,q)$.  Then $\gamma_0'(\hat{p},\hat{q})=\frac{\tan(s_2-s_1)}{\abs{\tan(s_2-s_1)}}\gamma_{s_1}'(p,q)$.  When $\tan(s_2-s_1)>0$, we have $\hat\alpha=e^{-is_1}\alpha$ and $\hat\beta=e^{is_1}\beta$, while if $\tan(s_2-s_1)<0$, we have $\hat\alpha=e^{is_1}\beta$ and $\hat\beta=e^{-is_1}\alpha$.  In either case, $\abs{z\cdot\hat\alpha}\abs{z\cdot\hat\beta}=\abs{z\cdot\alpha}\abs{z\cdot\beta}$, so the proof of \eqref{item:w_invariance} is complete.

  Let $t=\dist(z,p)$ and assume $[z]$ is represented by $\gamma_t(p,z)$.  Then
  \begin{align*}
    z\cdot\bar\alpha=\frac{1}{\sqrt{2}}\left(1-t^2/2-it\gamma_0'(p,z)\cdot\overline{\gamma_0'(p,q)}\right)+O(t^3),\\ z\cdot\bar\beta=\frac{1}{\sqrt{2}}\left(1-t^2/2+it\gamma_0'(p,z)\cdot\overline{\gamma_0'(p,q)}\right)+O(t^3),
  \end{align*}
  so
  \begin{align*}
    2\abs{z\cdot\bar\alpha}^2=1-t^2+2t\im(\gamma_0'(p,z)\cdot\overline{\gamma_0'(p,q)})+t^2\abs{\gamma_0'(p,z)\cdot\overline{\gamma_0'(p,q)}}^2+O(t^3),\\
    2\abs{z\cdot\bar\beta}^2=1-t^2-2t\im(\gamma_0'(p,z)\cdot\overline{\gamma_0'(p,q)})+t^2\abs{\gamma_0'(p,z)\cdot\overline{\gamma_0'(p,q)}}^2+O(t^3),
  \end{align*}
  and hence
  \begin{multline*}
    4\abs{z\cdot\bar\alpha}^2\abs{z\cdot\bar\beta}^2=1-2t^2+2t^2\abs{\gamma_0'(p,z)\cdot\overline{\gamma_0'(p,q)}}^2\\
    -4t^2\left(\im(\gamma_0'(p,z)\cdot\overline{\gamma_0'(p,q)})\right)^2+O(t^3).
  \end{multline*}
  Using the linear approximation $\log x=-1+x+O(x^2)$, we have
  \[
    \mu_{p,q}(z)=\frac{1}{2}t^2-\frac{1}{2}t^2\re\left(\left(\gamma_0'(p,z)\cdot\overline{\gamma_0'(p,q)}\right)^2\right)
    +O(t^3),
  \]
  so \eqref{eq:w_second_derivative} follows.

  For $u_p\in\Tan(\mathbb{CP}^n,p)$ and $u_q\in\Tan(\mathbb{CP}^n,q)$, we define
  \[
    p_s=\begin{cases}\cos(\abs{u_p}s)\frac{p}{\abs{p}}+\sin(\abs{u_p}s)\frac{u_p}{\abs{u_p}}&u_p\neq 0\\\frac{p}{\abs{p}}&u_p=0\end{cases},
  \]
  and
  \[
    q_s=\begin{cases}\cos(\abs{u_q}s)\frac{q}{\abs{q}}+\sin(\abs{u_q}s)\frac{u_q}{\abs{u_q}}&u_q\neq 0\\\frac{q}{\abs{q}}&u_q=0\end{cases}.
  \]
  Now,
  \[
    \left.\frac{d}{ds}\mu_{p_s,q_s}(z)\right|_{s=0}=-\frac{1}{2}\re\left(\frac{z\cdot\bar\alpha_0'}{z\cdot\bar\alpha_0}\right)-\frac{1}{2}\re\left(\frac{z\cdot\bar\beta_0'}{z\cdot\bar\beta_0}\right)
  \]
  Once again we set $t=\dist(z,p)$ and assume that $[z]$ is represented by $z=\gamma_t(p,z)$.  Then
  \begin{align*}
    z\cdot\bar\alpha_0&=\frac{1}{\sqrt{2}}\left(1-it\gamma_0'(p,z)\cdot\overline{\gamma_0'(p,q)}\right)+O(t^2),\\
    z\cdot\bar\beta_0&=\frac{1}{\sqrt{2}}\left(1+it\gamma_0'(p,z)\cdot\overline{\gamma_0'(p,q)}\right)+O(t^2),
  \end{align*}
  so
  \begin{align*}
    \frac{z\cdot\bar\alpha_0'}{z\cdot\bar\alpha_0}&=\sqrt{2}\left(\frac{p}{\abs{p}}\cdot\bar\alpha_0'+t\gamma_0'(p,z)\cdot\bar\alpha_0'+it\left(\frac{p}{\abs{p}}\cdot\bar\alpha_0'\right)\left(\gamma_0'(p,z)\cdot\overline{\gamma_0'(p,q)}\right)\right)+O(t^2),\\
    \frac{z\cdot\bar\beta_0'}{z\cdot\bar\beta_0}&=\sqrt{2}\left(\frac{p}{\abs{p}}\cdot\bar\beta_0'+t\gamma_0'(p,z)\cdot\bar\beta_0'-it\left(\frac{p}{\abs{p}}\cdot\bar\beta_0'\right)\left(\gamma_0'(p,z)\cdot\overline{\gamma_0'(p,q)}\right)\right)+O(t^2).
  \end{align*}
  Since $p_s\cdot\alpha_s=p_s\cdot\beta_s=\frac{1}{\sqrt{2}}$, we can differentiate and obtain
  \begin{align}
  \label{eq:p_cdot_alpha_derivative}
    \frac{p}{\abs{p}}\cdot\bar\alpha_0'&=-u_p\cdot\bar\alpha_0=\frac{i}{\sqrt{2}}u_p\cdot\overline{\gamma_0'(p,q)},\\
  \label{eq:p_cdot_beta_derivative}
    \frac{p}{\abs{p}}\cdot\bar\beta_0'&=-u_p\cdot\bar\beta_0=-\frac{i}{\sqrt{2}}u_p\cdot\overline{\gamma_0'(p,q)}.
  \end{align}
  We know $\alpha_0'+\beta_0'=\sqrt{2}u_p$, so
  \begin{multline*}
    \left.\frac{d}{ds}\mu_{p_s,q_s}(z)\right|_{s=0}\\
    =-t\re\left(\gamma_0'(p,z)\cdot\bar{u}_p-\left(u_p\cdot\overline{\gamma_0'(p,q)}\right)\left(\gamma_0'(p,z)\cdot\overline{\gamma_0'(p,q)}\right)\right)+O(t^2),
  \end{multline*}
  and \eqref{eq:w_first_derivative} follows.

  Set
  \[
    \phi_t^{p,q}(z)=z-(z\cdot\bar\alpha)\alpha-(z\cdot\bar\beta)\beta+e^{-it}(z\cdot\bar\alpha)\alpha+e^{it}(z\cdot\bar\beta)\beta.
  \]
  The same computations used to show \eqref{item:w_invariance} will also imply \eqref{item:m_invariance}, if we note that our assumptions now restrict us to the case where $\tan(s_2-s_1)>0$.  By using \eqref{eq:gamma_using_alpha_beta}, we can easily check \eqref{item:m_special_cases}.  Since $\abs{\phi_t^{p,q}(z)}=\abs{z}$ and $\phi_t^{p,q}(z)$ is linear in $z$, $\phi_t^{p,q}(z)$ is a unitary map on $\mathbb{C}^{n+1}$, and hence a holomorphic isometry on $\mathbb{CP}^n$.  Since $\phi_t^{p,q}(z)\cdot\bar\alpha=e^{-it}z\cdot\bar\alpha$, and $\phi_t^{p,q}(z)\cdot\bar\beta=e^{it}z\cdot\bar\beta$, we must have $\dist(\phi_t^{p,q}(z),\alpha)=\dist(z,\alpha)$ and $\dist(\phi_t^{p,q}(z),\beta)=\dist(z,\beta)$.  It follows immediately that $\phi_t^{p,q}$ preserves $\dom\mu_{p,q}$ and \eqref{item:m_level_curves} holds.

  Now, we assume $\dist(p,q)=\frac{\pi}{4}$ and $(D_{p,u_p}+D_{q,u_q})\dist(p,q)=0$.  By \eqref{eq:point_distance_derivative}, we have $\left<u_p,\gamma_0'(p,q)\right>_\omega=-\left<u_q,\gamma_0'(q,p)\right>_\omega$.  Using \eqref{eq:geodesic_inner_products}, we will find it helpful to introduce the notation $u_p\cdot\overline{\gamma_0'(p,q)}=x+iy_p$ and $u_q\cdot\overline{\gamma_0'(q,p)}=-x+iy_q$ for some $x,y_p,y_q\in\mathbb{R}$.  We will also use the orthogonal projections $\hat u_p=u_p-(x+iy_p)\gamma_0'(p,q)$ and $\hat u_q=\frac{\abs{q\cdot\bar{p}}}{q\cdot\bar{p}}\left(u_q-(-x+iy_q)\gamma_0'(q,p)\right)$. Using \eqref{eq:q_switch_p}, we have $\gamma_0'(q,p)=\frac{q\cdot\bar{p}}{\abs{q}\abs{p}}\left(\frac{p}{\abs{p}}-\gamma_0'(p,q)\right)$.  Let $p_s$ and $q_s$ be as in the proof of \eqref{item:w_first_derivative}.  We can use \eqref{eq:p_cdot_alpha_derivative} and \eqref{eq:p_cdot_beta_derivative} to show
  \[
    \left.\frac{d}{ds}\phi_t^{p_s,q_s}(p)\right|_{s=0}=\frac{e^{-it}-1}{\sqrt{2}}\left((ix-y_p)\alpha_0+\alpha_0'\right)
    +\frac{e^{it}-1}{\sqrt{2}}\left((y_p-ix)\beta_0+\beta_0'\right).
  \]
  To compute $\alpha_0'$ and $\beta_0'$, we check
  \[
    \left.\frac{d}{ds}\gamma_0'(p_s,q_s)\right|_{s=0}=\frac{\abs{p}\abs{q}u_q}{q\cdot\bar{p}}-\frac{\abs{p}(\abs{q}u_q\cdot\bar{p}+\abs{p}q\cdot\bar u_p)q}{(q\cdot\bar{p})^2}-u_p.
  \]
  Since $p\cdot\bar u_p=0$, $\frac{\abs{p}q\cdot\bar u_p}{q\cdot\bar{p}}=x-iy_p$, and similarly $\frac{\abs{q}u_q\cdot\bar{p}}{q\cdot\bar{p}}=-x+iy_q$.  Using \eqref{eq:q_in_terms_of_p}, we have
  \[
    \left.\frac{d}{ds}\gamma_0'(p_s,q_s)\right|_{s=0}=\frac{\abs{p}\abs{q}u_q}{q\cdot\bar{p}}+i(y_p-y_q)\left(\frac{p}{\abs{p}}+\gamma_0'(p,q)\right)-u_p.
  \]
  Introducing our projections $\hat u_p$ and $\hat u_q$ yields
  \[
    \left.\frac{d}{ds}\gamma_0'(p_s,q_s)\right|_{s=0}=\sqrt{2}\hat u_q-\hat u_p+(-x+iy_p)\frac{p}{\abs{p}}-2iy_q\gamma_0'(p,q).
  \]
  Thus, we have the orthogonal decompositions
  \begin{align*}
    \alpha_0'&=\frac{1}{\sqrt{2}}\left((1-i)\hat u_p+i\sqrt{2}\hat u_q-(ix+y_p)\frac{p}{\abs{p}}+(x+iy_p+2y_q)\gamma_0'(p,q)\right),\\
    \beta_0'&=\frac{1}{\sqrt{2}}\left((1+i)\hat u_p-i\sqrt{2}\hat u_q+(ix+y_p)\frac{p}{\abs{p}}+(x+iy_p-2y_q)\gamma_0'(p,q)\right),
  \end{align*}
  so
  \begin{align*}
    (ix-y_p)\alpha_0+\alpha_0'&=\frac{1}{\sqrt{2}}\left((1-i)\hat u_p+i\sqrt{2}\hat u_q-2y_p\frac{p}{\abs{p}}+2y_q\gamma_0'(p,q)\right),\\
    (y_p-ix)\beta_0+\beta_0'&=\frac{1}{\sqrt{2}}\left((1+i)\hat u_p-i\sqrt{2}\hat u_q+2y_p\frac{p}{\abs{p}}-2y_q\gamma_0'(p,q)\right).
  \end{align*}
  Substituting, we find that
  \[
    \left.\frac{d}{ds}\phi_t^{p_s,q_s}(p)\right|_{s=0}=(\cos t-1)\hat u_p
    -\sin t\left(\hat u_p-\sqrt{2}\hat u_q-2iy_p\frac{p}{\abs{p}}+2iy_q\gamma_0'(p,q)\right).
  \]
  Since $\frac{p}{\abs{p}}=\cos t\gamma_t(p,q)-\sin t\gamma_t'(p,q)$ and $\gamma_0'(p,q)=\sin t\gamma_t(p,q)+\cos t\gamma_t'(p,q)$, we have
  \begin{multline*}
    \left.\frac{d}{ds}\phi_t^{p_s,q_s}(p)\right|_{s=0}=(\cos t-1)\hat u_p
    -\sin t\left(\hat u_p-\sqrt{2}\hat u_q\right)\\
    -2i\sin t\left((y_q\cos t+y_p\sin t)\gamma_t'(p,q)+(y_q\sin t-y_p\cos t)\gamma_t(p,q)\right).
  \end{multline*}
  We know $\phi_t^{p,q}(p)=\gamma_t(p,q)$ from \eqref{item:m_special_cases}, so \eqref{eq:vector_derivative} implies
  \begin{multline*}
    \left.(D_{p,u_p}+D_{q,u_q})\phi_t^{p,q}(z)\right|_{z=p}=(\cos t-1)\hat u_p
    -\sin t\left(\hat u_p-\sqrt{2}\hat u_q\right)\\
    -2i\sin t(y_q\cos t+y_p\sin t)\gamma_t'(p,q).
  \end{multline*}
  We can therefore estimate:
  \begin{multline*}
    \left.\abs{(D_{p,u_p}+D_{q,u_q})\phi_t^{p,q}(z)}^2_\omega\right|_{z=p}\leq\left(\abs{\cos t-1-\sin t}\abs{\hat u_p}_\omega+\sqrt{2}\sin t\abs{\hat u_q}_\omega\right)^2\\
    +4\sin^2 t(y_q\cos t+y_p\sin t)^2.
  \end{multline*}
  When $0<t\leq\frac{\pi}{4}$ we have $0\leq\frac{-\cos t+1+\sin t}{\sin t}\leq\sqrt{2}$, so
  \[
    \abs{\cos t-1-\sin t}\abs{\hat u_p}_\omega+\sqrt{2}\sin t\abs{\hat u_q}_\omega\leq\sqrt{2}\sin t(\abs{u_p}_\omega+\abs{u_q}_\omega)
  \]
  and since $\abs{y_q\cos t+y_p\sin t}\leq\abs{u_p}_\omega+\abs{u_q}_\omega$, \eqref{eq:m_Lipschitz_in_p} follows.

\end{proof}

Our final object of study in this section will be the distance function for the boundary of a domain.  As in \cite{Fed59}, differentiability of $\delta$ at a point $z$ will depend on the set of boundary points minimizing the distance to $z$.  We will see that the directional derivatives of $\delta$ always exist off of $\partial\Omega$, and they are completely determined by the set $\Pi_\Omega(z)$.

\begin{lem}
\label{lem:directional_derivative}
  Let $\Omega\subset\mathbb{CP}^n$ be a domain.  Let $z\notin\partial\Omega$ and $u\in\Tan(\mathbb{CP}^n,z)$.  Then we have
  \begin{equation}
  \label{eq:directional_derivative}
    D_{z,u}^+\delta(z)=-\sup_{p\in\Pi_\Omega(z)}\left<u,\gamma'_0(z,p)\right>_\omega
  \end{equation}
\end{lem}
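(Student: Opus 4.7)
The plan is to show both inequalities separately, since the upper bound on the $\limsup$ follows from a ``test boundary point'' argument while the lower bound on the $\liminf$ requires the compactness of $\partial\Omega$ together with Lemma \ref{lem:point_distance_derivative}. The case $u=0$ is trivial, so assume $u\neq 0$ and pick $q\in\mathbb{CP}^n$ with $u/\abs{u}=\gamma_0'(z,q)$; write $z_h=\gamma_{h\abs{u}}(z,q)$. Because $z\notin\partial\Omega$ and $\partial\Omega$ is closed in the compact space $\mathbb{CP}^n$, $\delta(z)>0$ and $\Pi_\Omega(z)\neq\emptyset$, and every $p\in\Pi_\Omega(z)$ satisfies $0<\dist(z,p)<\frac{\pi}{2}$, so Lemma \ref{lem:point_distance_derivative} applies.

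For the upper bound, I fix $p\in\Pi_\Omega(z)$. Then $\delta(z_h)\leq\dist(z_h,p)$ while $\delta(z)=\dist(z,p)$, so
\[
  \frac{\delta(z_h)-\delta(z)}{h}\leq\frac{\dist(z_h,p)-\dist(z,p)}{h}.
\]
Letting $h\to 0^+$ and applying Lemma \ref{lem:point_distance_derivative} together with \eqref{eq:geodesic_inner_products} gives $\limsup(\delta(z_h)-\delta(z))/h\leq-\left<u,\gamma_0'(z,p)\right>_\omega$. Taking the infimum over $p\in\Pi_\Omega(z)$ produces the desired $\leq$.

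For the lower bound, for each small $h>0$ I pick a nearest point $p_h\in\Pi_\Omega(z_h)$; then $\delta(z)\leq\dist(z,p_h)$ and $\delta(z_h)=\dist(z_h,p_h)$ yield
\[
  \frac{\delta(z_h)-\delta(z)}{h}\geq\frac{\dist(z_h,p_h)-\dist(z,p_h)}{h}=-\frac{1}{h}\int_0^h\left<u,\gamma_0'(z_s,p_h)\right>_\omega ds,
\]
where the second equality uses Lemma \ref{lem:point_distance_derivative} along the geodesic $s\mapsto z_s$. Given a sequence $h_j\to 0^+$ I extract a subsequence along which $p_{h_j}\to p_\infty\in\partial\Omega$; the triangle inequality $\delta(z)\leq\dist(z,p_{h_j})\leq h_j\abs{u}+\delta(z_{h_j})$ together with continuity of $\delta$ forces $\dist(z,p_\infty)=\delta(z)$, so $p_\infty\in\Pi_\Omega(z)$. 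Since $z$ is bounded away from $\partial\Omega$ and $(z',p)\mapsto\gamma_0'(z',p)$ is continuous off the diagonal (its Lipschitz bound is furnished by Corollary \ref{cor:lipschitz_geodesic}), the integrand converges uniformly for $s\in[0,h_j]$, so the limit of the right-hand side is $-\left<u,\gamma_0'(z,p_\infty)\right>_\omega\geq-\sup_{p\in\Pi_\Omega(z)}\left<u,\gamma_0'(z,p)\right>_\omega$. Combining the two bounds shows the limit exists and equals the stated value.

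The main obstacle is the lower bound: one must justify interchanging the limit with the integral when the selected minimizers $p_h$ vary with $h$. This is why I isolate a convergent subsequence of $p_h$ and use the Lipschitz continuity of $\gamma_0'(\cdot,\cdot)$ from Corollary \ref{cor:lipschitz_geodesic} on the compact product of a neighborhood of $z$ with $\partial\Omega$, which gives the uniform control needed; identifying the limit point as an element of $\Pi_\Omega(z)$ is the other ingredient that lets one recognize the right-hand side after passing to the limit.
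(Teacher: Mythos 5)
Your proof is correct and follows essentially the same approach as the paper: both establish the $\limsup$ upper bound by comparing against a fixed $p\in\Pi_\Omega(z)$, and both establish the $\liminf$ lower bound by choosing nearest points $p_h\in\Pi_\Omega(z_h)$, extracting a convergent subsequence, and showing the limit point lies in $\Pi_\Omega(z)$. One small notational imprecision: in your integral identity the integrand should involve the unit tangent to the geodesic at $z_s$ (parallel transport of $u$) rather than $u\in\Tan(\mathbb{CP}^n,z)$ itself, since these vectors live at different points; this does not affect the argument because the integrand tends to $\left<u,\gamma_0'(z,p_\infty)\right>_\omega$ as $s\rightarrow 0$ regardless.
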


\begin{rem}
  As in \cite{Fed59}, we may use this to show that $\delta$ is differentiable at $z$ if and only if $\abs{\Pi_\Omega(z)}=1$, in which case $D_z\delta(z)=-\gamma_0'(z,p)$ for $\set{p}=\Pi_\Omega(z)$.
\end{rem}

\begin{proof}
  When $u\neq 0$, pick $w\in B(z,\delta(z))\backslash\set{z}$ such that $\gamma_0'(z,w)=\frac{u}{\abs{u}}$.  For $p\in\Pi_\Omega(z)$ we must have $\delta(w)\leq\dist(w,p)$, so $\frac{\delta(w)-\delta(z)}{\dist(z,w)}\leq\frac{\dist(w,p)-\dist(z,p)}{\dist(z,w)}$.  We can apply \eqref{eq:point_distance_derivative} to obtain
  \begin{equation}
  \label{eq:difference_quotient_estimate}
    \frac{\delta(w)-\delta(z)}{\dist(z,w)}
    \leq-\left<u,\gamma'_0(z,p)\right>_\omega+O(\dist(z,w)).
  \end{equation}
  Homogeneity of the Fubini-Study metric implies that the constant in the error term can be chosen independently of $p$, $z$, and $w$, provided that we preserve a uniform lower bound on $\dist(z,p)$ and uniform upper bounds on $\dist(z,w)$ and $\dist(z,p)$.  Since this holds for all $p\in\Pi_\Omega(z)$, we may take the infimum over all such $p$.  If we replace $w$ with $\gamma_h(z,w)$ and take the limit in $h$, we have
  \begin{equation}
  \label{eq:directional_derivative_first_half}
    \limsup_{h\rightarrow 0^+}\frac{\delta(\gamma_h(z,w))-\delta(z)}{h}
    \leq -\sup_{p\in\Pi_\Omega(z)}\left<u,\gamma'_0(z,p)\right>_\omega.
  \end{equation}

  Before tackling the limit infimum, we claim that
  \begin{multline}
  \label{eq:liminf_of_supremum}
    \liminf_{h\rightarrow 0^+}\sup_{p\in\Pi_\Omega(\gamma_h(z,w))}\left<\gamma'_0(\gamma_h(z,w),z),\gamma'_0(\gamma_h(z,w),p)\right>_\omega\\
    \geq-\sup_{p\in\Pi_\Omega(z)}\left<u,\gamma'_0(z,p)\right>_\omega.
  \end{multline}
  To show this, we first use \eqref{eq:q_switch_p} to check
  \[
    \gamma'_0(\gamma_h(z,w),z)=\cos h\left(\frac{z}{\abs{z}}\tan h-\gamma_0'(z,w)\right)\rightarrow-u.
  \]
  Now, let $\set{h_j}$ be any positive sequence decreasing to zero.  For every $h_j$, choose $p_j\in\Pi_\Omega(\gamma_{h_j}(z,w))$ satisfying
  \begin{multline*}
    \left<\gamma'_0(\gamma_{h_j}(z,w),z),\gamma'_0(\gamma_{h_j}(z,w),p_j)\right>_\omega\\
    =\sup_{p\in\Pi_\Omega(\gamma_{h_j}(z,w))}\left<\gamma'_0(\gamma_{h_j}(z,w),z),\gamma'_0(\gamma_{h_j}(z,w),p)\right>_\omega.
  \end{multline*}
  Every limit point $p_\infty$ of $\set{p_j}$ must lie in $\Pi_\Omega(z)$, so we have
  \[
    \liminf_{j\rightarrow\infty}\left<\gamma'_0(\gamma_{h_j}(z,w),z),\gamma'_0(\gamma_{h_j}(z,w),p_j)\right>_\omega\geq\inf_{p\in\Pi_\Omega(z)}\left<-u,\gamma'_0(z,p)\right>_\omega.
  \]
  Since this holds for every such sequence, \eqref{eq:liminf_of_supremum} must follow.

  Now, we apply \eqref{eq:difference_quotient_estimate} with $\gamma_h(z,w)$ substituted for $z$ and $z$ substituted for $w$.  Taking limits with \eqref{eq:liminf_of_supremum} gives us
  \begin{equation}
  \label{eq:directional_derivative_second_half}
    \limsup_{h\rightarrow 0^+}\frac{\delta(z)-\delta(\gamma_h(z,w))}{h}\\
    \leq\sup_{p\in\Pi_\Omega(z)}\left<u,\gamma'_0(z,p)\right>_\omega.
  \end{equation}
  Combining \eqref{eq:directional_derivative_first_half} and \eqref{eq:directional_derivative_second_half} proves that the limit exists, and hence \eqref{eq:directional_derivative} follows.

\end{proof}

\section{Lipschitz Boundaries}
\label{sec:lipschitz_boundary}

Although the Lipschitz property is locally helpful, we will need a global object in our main construction.  Ultimately, we will want to work with the holomorphic isometry $\phi_t^{p,q}(z)$ given by Lemma \ref{lem:weight_and_map}.  However, since the trajectories of $\phi_t^{p,q}(z)$ are not generally geodesic, the computations in this section will be simplified if we instead work with the family of geodesics $\gamma_t(z,q)$ through a single point $q$.  Eventually, we will need to make use of the fact that $\gamma_t(z,q)\approx\phi_t^{p,q}(z)$ for $z$ sufficiently close to $p$ (see Lemma \ref{lem:tools}), so that we can combine the results in this section with results from the previous section.  In order to parameterize this family of maps, we introduce a map $v(p)$ that we may substitute for $q$.

\begin{lem}
\label{lem:transverse_vector_field}
  Let $\Omega\subset\mathbb{CP}^n$ be a Lipschitz domain.  There exists a Lipschitz map $v:\partial\Omega\rightarrow\mathbb{CP}^n$ and constants $R_0>0$ and $A_0>0$ such that for every $p\in\partial\Omega$, $z\in B(p,R_0)\cap\Omega$, and $q\in\Pi_\Omega(z)$, we have $\dist(p,v(p))=\frac{\pi}{4}$ and
  \begin{equation}
  \label{eq:vector_inner_product}
    \left<\gamma'_0(z,v(p)),\gamma'_0(z,q)\right>_\omega<-A_0.
  \end{equation}
  Furthermore, $v$ is $C^1$ in the sense that there exists a continuous family of linear maps $D_p v(p):\Tan(\mathbb{CP}^n,p)\rightarrow\Tan(\mathbb{CP}^n,v(p))$ such that for all $u\in\Tan(\partial\Omega,p)$ the derivative $D_{p,u}^{\partial\Omega}v(p)$ exists and $D_{p,u}^{\partial\Omega}v(p)=(D_p v(p))(u)$.
\end{lem}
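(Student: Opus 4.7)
The plan is to construct $v$ by first building a smooth inward-pointing vector field $V$ on a tubular neighborhood of $\partial\Omega$ in $\mathbb{CP}^n$, and then setting $v(p)$ to be the unique point at geodesic distance $\pi/4$ from $p$ along the geodesic whose initial velocity is $V(p)/\abs{V(p)}_\omega$. This is well-defined since the diameter of $\mathbb{CP}^n$ in our normalization is $\pi/2$, so the geodesic segment of length $\pi/4$ is unambiguous.

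To construct $V$, I would cover the compact set $\partial\Omega$ by finitely many charts $U_j$ in which, after the rotation built into \eqref{eq:graph_definition}, $\partial\Omega$ is a Lipschitz graph with Lipschitz constant $L_j$. In each such chart the constant vector field $e_j=-\partial/\partial(\im\tilde{z}^n)$, pulled back to $\mathbb{CP}^n$, lies strictly in the inward cone at every boundary point of $U_j$, with a quantitative margin controlled by $L_j$. Choose a smooth partition of unity $\set{\chi_j}$ on $\mathbb{CP}^n$ subordinate to $\set{U_j}$ --- available because $\mathbb{CP}^n$ is a smooth manifold even though $\partial\Omega$ is only Lipschitz --- and set $V=\sum_j\chi_j e_j$ on a neighborhood of $\partial\Omega$. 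By refining the cover so that the chart axes of any two overlapping $U_j$, $U_k$ are close to each other, the convex combination $V$ stays bounded away from zero and remains strictly inward at every $p\in\partial\Omega$ with a uniform quantitative margin.

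For the transversality estimate \eqref{eq:vector_inner_product}, I would choose $R_0>0$ small enough that for every $p\in\partial\Omega$, every $z\in B(p,R_0)\cap\Omega$, and every $q\in\Pi_\Omega(z)$, the points $p$, $z$, and $q$ all lie in a common chart $U_j$. Working in the graph coordinates of $U_j$, the point $z$ lies strictly below the graph while $q$ lies on it, and the Lipschitz bound $L_j$ forces $\gamma_0'(z,q)$ to have a quantitatively positive component in the direction $+\partial/\partial(\im\tilde{z}^n)$. On the other hand, Corollary \ref{cor:lipschitz_geodesic} combined with the smoothness of $V$ shows that $\gamma_0'(z,v(p))$ differs from $\gamma_0'(p,v(p))=V(p)/\abs{V(p)}_\omega$ by $O(\dist(z,p))$, while $V(p)/\abs{V(p)}_\omega$ has a quantitatively negative $\im\tilde{z}^n$-component by construction. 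Pairing these two bounds and shrinking $R_0$ further if needed yields the desired uniform $A_0>0$.

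The $C^1$ regularity is then automatic: the construction already defines $v$ as a smooth map on a neighborhood of $\partial\Omega$ in $\mathbb{CP}^n$, so the ordinary differential of this smooth extension supplies the required continuous family of linear maps $D_p v(p)$, and for $u\in\Tan(\partial\Omega,p)$ the tangential directional derivative $D_{p,u}^{\partial\Omega}v(p)$ agrees with that differential applied to $u$ simply because the extension is smooth and the defining limit involves sequences inside $\partial\Omega$. The main obstacle I anticipate is the coordinated global choice of the cover and partition: I must arrange the chart axes to be close enough that $V$ remains in the strict inward cone with a uniform margin, and simultaneously arrange $R_0$ so that the transversality bound $A_0$ does not degenerate anywhere on $\partial\Omega$ --- both of which require leveraging compactness of $\partial\Omega$ together with quantitative control of the Lipschitz constants of the local graphs.
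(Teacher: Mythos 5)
Your overall blueprint matches the paper's: build a local transverse direction from the Lipschitz graph, globalize with a partition of unity, and place $v(p)$ at geodesic distance $\pi/4$. The gap is that you assert, without argument, the single statement on which the whole lemma rests: that ``the Lipschitz bound $L_j$ forces $\gamma_0'(z,q)$ to have a \emph{quantitatively} positive component in $+\partial/\partial(\im\tilde{z}^n)$.'' What the graph geometry gives for free is only the sign: if $q\in\Pi_\Omega(z)$ had $\im\tilde q^n\leq\im\tilde z^n$, a short vertical segment through $z$ would lie in $B(z,\delta(z))\subset\Omega$ yet above the graph, a contradiction, so $\im u_0^n\geq 0$ for $u_0=\lim\gamma_0'(z_j,q_j)$. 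Getting a uniform \emph{positive} lower bound is where the work is. The naive attempt --- requiring the topmost interior point of $B(z,\delta(z))$ to sit under the graph and invoking the Lipschitz bound with $\varphi(\tilde q)=0$ --- only yields $\im u_0^n\geq\frac{1-M^2}{1+M^2}$, which is vacuous once $M\geq 1$. The paper's proof instead exploits the first-order optimality of $q$ as a distance minimizer: perturbing $q$ along the graph in the direction $-u_0$ and using $\dist(z,q_{j,s})\geq\dist(z,q_j)$ gives (after passing to limits) $1-a^2+La\leq 0$ with $a=\im u_0^n$ and $\abs{L}\leq M\sqrt{1-a^2}$, hence $a\geq\frac{1}{\sqrt{1+M^2}}>0$ for \emph{every} $M$. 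Without this variational step you have no $A_0$, and the rest of the construction has nothing to stand on.

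A secondary point: your worry about ``refining the cover so the chart axes of any two overlapping $U_j$, $U_k$ are close'' is neither achievable for general Lipschitz domains (the rotations in \eqref{eq:graph_definition} may differ by angles up to $\arctan M$, which is not small) nor necessary. The patched field stays away from zero because of the transversality bound itself, not because the summands point in nearby directions: if each local vector $\gamma_0'(z,v_{p_j})$ satisfies $\langle\gamma_0'(z,v_{p_j}),\gamma_0'(z,q)\rangle_\omega<-A_{p_j}$, then the convex combination $u(z)=\sum_j\chi_j(z)\gamma_0'(z,v_{p_j})$ satisfies $\langle u(z),\gamma_0'(z,q)\rangle_\omega<-\inf_j A_{p_j}$, which automatically gives $\abs{u(z)}\geq\inf_j A_{p_j}>0$. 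This is the mechanism the paper uses, and it sidesteps the alignment issue entirely.
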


\begin{proof}
  For each $p\in\partial\Omega$, choose a rotation so that $\Omega$ can be expressed near $p$ by \eqref{eq:graph_definition}.  Let $M$ be the Lipschitz constant of $\varphi$, and choose any $0<A_p<\frac{1}{\sqrt{M^2+1}}$.  It will simplify our computations to observe that the Fubini-Study metric is approximately the Euclidean metric near the origin of our local coordinate patch.  More precisely,
  \begin{multline*}
    \sin^2\dist(z,w)=\frac{\abs{\tilde{z}-\tilde{w}}^2+\abs{\tilde{z}}^2\abs{\tilde{w}}^2-\abs{z\cdot\bar{\tilde{w}}}^2}{(\abs{\tilde{z}}^2+1)(\abs{\tilde{w}}^2+1)}\\
    =\abs{\tilde z-\tilde w}^2+O\left(\abs{\tilde{z}-\tilde{w}}^2\max\set{\abs{\tilde{w}}^2,\abs{\tilde{z}}^2}\right),
  \end{multline*}
  so
  \begin{equation}
  \label{eq:approximate_distance}
    \dist(z,w)=\abs{\tilde z-\tilde w}+O\left(\abs{\tilde{z}-\tilde{w}}\max\set{\abs{\tilde{w}}^2,\abs{\tilde{z}}^2}\right).
  \end{equation}
  Furthermore,
  \[
    \gamma_0'(z,w)=\frac{\left((\abs{\tilde{z}}^2+1)\tilde w-(\tilde{w}\cdot\bar{\tilde{z}}+1)\tilde z,\abs{\tilde{z}}^2-\tilde{w}\cdot\bar{\tilde{z}}\right)}{\tan(\dist(z,w))(\tilde{w}\cdot\bar{\tilde{z}}+1)\sqrt{\abs{\tilde{z}}^2+1}}
  \]
  so \eqref{eq:approximate_distance} can be used to show
  \begin{equation}
  \label{eq:approximate_normal}
    \gamma_0'(z,w)=\left(\frac{\tilde w-\tilde z}{\abs{\tilde{z}-\tilde{w}}},0\right)+O(\max\set{\abs{\tilde{z}},\abs{\tilde{w}}}).
  \end{equation}

  In these coordinates, let $v_p=[0:\cdots:0:-i:1]$.  Suppose that for every $j\in\mathbb{N}$ there exists $z_j\in B(p,1/j)$ and $q_j\in\Pi(z_j)$ such that $\left<\gamma'_0(z_j,v_p),\gamma'_0(z_j,q_j)\right>_\omega\geq-A_p$.  We may assume that $[z_j]$ is represented by an element satisfying $z_j^{n+1}=1$, so that $z_j\rightarrow p$.  Observe that $\gamma'_0(z_j,v_p)\rightarrow\gamma'_0(p,v_p)=(0,\ldots,0,-i,0)$.
  Let $u_0$ be a limit point of $\set{\gamma'_0(z_j,q_j)}$, and restrict to a subsequence so that this is the unique limit.  Taking limits, we have $\re(-i\overline{u^n_0})\geq -A_p$, or $\im u^n_0\leq A_p$.  Note that $\abs{u_0}=1$, so $\abs{u_0'}^2+\abs{\re u_0^n}^2=1-\abs{\im u_0^n}^2$.  Furthermore, we must have $\im z_j^n<\im q_j^n$.  Otherwise the interval $(q_j',\re q_j^n+is)$ for $\im q_j^n<s<2\im z_j^n-\im q_j^n$ would lie inside $B(z_j,\delta(z_j))\subset\Omega$ but above the graph of $\varphi$, contradicting \eqref{eq:graph_definition}.  Hence, we must have $\im u_0^n\geq 0$.

  For $s>0$, in our local coordinate patch we define $\tilde{q}_{j,s}'=\tilde{q}_j'-su_0'$, $\re\tilde{q}_{j,s}^n=\re\tilde{q}_j^n-s\re u_0^n$, and $\im\tilde{q}_{j,s}^n=\varphi(\tilde{q}_{j,s}',\re\tilde{q}_{j,s}^n)$.  By assumption,
  \[
    \abs{\im(\tilde{q}^n_{j,s}-\tilde{q}^n_j)}\leq M\sqrt{\abs{\tilde{q}'_{j,s}-\tilde{q}'_j}^2+\abs{\re(\tilde{q}^n_{j,s}-\tilde{q}^n_j)}^2}=Ms\sqrt{1-\abs{\im u_0^n}^2}.
  \]
  This tells us that for every $j$ we can find a limit point $L_j$ of $\set{s^{-1}(\im(\tilde{q}^n_j-\tilde{q}^n_{j,s}))}$ as $s\rightarrow 0^+$ such that $\abs{L_j}\leq M\sqrt{1-\abs{\im u_0^n}^2}$.  Using \eqref{eq:approximate_normal}, there exists a limit point $u_j$ of $\set{\gamma'_0(q_j,q_{j,s})}$ as $s\rightarrow 0^+$ taking the form
  \[
    u_j=-(u_0',\re u_0^n+iL_j,0)\left(1-\abs{\im u_0^n}^2+\abs{L_j}^2\right)^{-1/2}+O(\abs{\tilde{q}_j}).
  \]
  Let $L$ be a limit point of $\set{L_j}$, and further restrict to a subsequence so that $L_j\rightarrow L$.  Since $\tilde{q}_j\rightarrow 0$, we must have
  \begin{equation}
  \label{eq:u_j_limit}
    u_j\rightarrow-(u_0',\re u_0^n+iL,0)\left(1-\abs{\im u_0^n}^2+\abs{L}^2\right)^{-1/2}
  \end{equation}

  Since $B(z_j,\delta(z_j))\subset\Omega$ and $q_{j,s}\in\partial\Omega$, we must have $\dist(z_j,q_{j,s})\geq\dist(z_j,q_j)$.  Using \eqref{eq:point_distance_derivative}, we have
  \[
    0\geq\frac{\dist(q_j,z_j)-\dist(q_{j,s},z_j)}{\dist(q_j,q_{j,s})}\geq\left<\gamma_0'(q_j,q_{j,s}),\gamma_0'(q_j,z_j)\right>_\omega-O(\dist(q_j,q_{j,s})).
  \]
  Considering limit points as $s\rightarrow 0^+$, we see that $0\geq\left<u_j,\gamma_0'(q_j,z_j)\right>_\omega$.  Using \eqref{eq:approximate_normal}, $\gamma_0'(q_j,z_j)\rightarrow -u_0$, so combining this with \eqref{eq:u_j_limit} we see that
  \[
    0\geq\left(1-\abs{\im u_0^n}^2+L\im u_0^n\right)\left(1-\abs{\im u_0^n}^2+\abs{L}^2\right)^{-1/2}.
  \]
  However, since $\abs{L}\leq M\sqrt{1-\abs{\im u_0^n}^2}$,
  \[
    1-\abs{\im u_0^n}^2+L\im u_0^n\geq 1-\abs{\im u_0^n}^2-M\sqrt{1-\abs{\im u_0^n}^2}\abs{\im u_0^n}
  \]
  We know $0\leq\im u_0^n\leq A_p$, so
  \[
    1-\abs{\im u_0^n}^2-M\sqrt{1-\abs{\im u_0^n}^2}\abs{\im u_0^n}\geq \sqrt{1-\abs{\im u_0^n}^2}\left(\sqrt{1-A_p^2}-MA_p\right)
  \]
  Combining inequalities, we find that $0\geq\sqrt{1-A_p^2}-MA_p$, which is equivalent to $A_p\geq \frac{1}{\sqrt{1+M^2}}$, a contradiction.

  We now know that for every $p\in\partial\Omega$, there exists a point $v_p$ and a radius $R_p$ so that for every $z\in B(p,R_p)$ and $q\in\Pi(z)$, $\left<\gamma'_0(z,v_p),\gamma'_0(z,q)\right>_\omega<-A_p$.  We also observe that $v_p$ has been chosen so that $\dist(p,v_p)=\frac{\pi}{4}$.  We may assume that $R_p<\frac{\pi}{4}$.  Choose a finite collection $\set{p_j}$ so that $B(p_j,R_{p_j}/2)$ covers $\partial\Omega$, and let $\set{\chi_j}$ be a smooth partition of unity subordinate to $\set{B(p_j,R_{p_j}/2)}$.

  Note that $z\in\supp\chi_j$ implies that
  \[
    \frac{\pi}{8}<\frac{\pi}{4}-R_{p_j}/2\leq\dist(z,v_{p_j})\leq R_{p_j}/2+\frac{\pi}{4}<\frac{3\pi}{8},
  \]
  so this is uniformly bounded away from $0$ and $\frac{\pi}{2}$.  Fix $d>0$ so that $\chi_j(z)>0$ for at least one $j$ whenever $\delta(z)\leq d$.  For all $z\in\mathbb{CP}^n$ satisfying $\delta(z)\leq d$, we define
  \[
    u(z)=\sum_{\set{j:z\in\supp\chi_j}}\chi_j(z)\gamma'_0(z,v_{p_j})\text{ and }v(z)=\frac{z}{\abs{z}}+\frac{u(z)}{\abs{u(z)}}.
  \]
  We can immediately compute $v(z)\cdot\bar{z}=\abs{z}$ and $\abs{v(z)}^2=2$, so $\dist(z,v(z))=\frac{\pi}{4}$.  Furthermore, since $\dist(z,v_{p_j})$ is uniformly bounded away from $0$ and $\frac{\pi}{2}$ when $z\in\supp\chi_j$, we can check that $v(z)$ is smooth in $z$ using \eqref{eq:geodesic_vector_derivative_q} and \eqref{eq:q_switch_p}.  With this in mind, we may choose $A_0>0$ so that $0<A_0<\abs{u(z)}^{-1}\inf_j A_{p_j}$.

  Now, fix $\eps>0$ satisfying $0<\eps<\abs{u(z)}^{-1}\inf_j A_{p_j}-A_0$.  Since \eqref{eq:lipschitz_geodesic_gradient} implies that $\abs{D_p\gamma_0'(p,v(p))}_\omega\leq 2$, $\gamma_0'(z,q)$ is uniformly Lipschitz in $q$ with a constant independent of $z$ for $z$ sufficiently close to $p$ and $q$ sufficiently close to $v(p)$.  Since $v(z)$ is also Lipschitz there exists $0<R_0<\min\set{d,\inf_j R_{p_j}/2}$ such that for every $p\in\partial\Omega$ and $z\in B(p,R_0)$, $\abs{\gamma'_0(z,v(p))-\gamma'_0(z,v(z))}<\eps$.  Then for $p\in\partial\Omega$, $z\in B(p,R_0)$, and $q\in\Pi(z)$, we have
  \[
    \left<\gamma'_0(z,v(p)),\gamma'_0(z,q)\right>_\omega<\left<\gamma'_0(z,v(z)),\gamma'_0(z,q)\right>_\omega+\eps.
  \]
  Now $\gamma_0'(z,v(z))=\frac{u(z)}{\abs{u(z)}}$, so
  \[
    \left<\gamma'_0(z,v(p)),\gamma'_0(z,q)\right>_\omega
    \leq-\abs{u(z)}^{-1}\sum_{\set{j:z\in\supp\chi_j}}\chi_j(z)A_{p_j}+\eps< -A_0,
  \]
  and we have proven \eqref{eq:vector_inner_product}.

  Since $v$ is actually the restriction of a smooth map to $\partial\Omega$, we can easily see that $D_{p,u}^{\partial\Omega}v(p)=D_{p,u}v(p)$, so the tangential derivative exists and is continuous.

\end{proof}

A critical consequence of \eqref{eq:vector_inner_product} follows when we combine it with \eqref{eq:directional_derivative}.  If $z\in B(p,R_0)\cap\overline\Omega$ and $0\leq t<R_0-\dist(z,p)$, we observe that $\delta(\gamma_t(z,v(p)))$ is an absolutely continuous function of $t$ and compute
\begin{multline}
\label{eq:delta_geodesic_estimate_inward}
  \delta(\gamma_t(z,v(p)))-\delta(z)\\
  =-\int_0^t \sup_{q\in\Pi_\Omega(\gamma_s(p,v(p)))}\left<\gamma'_0(\gamma_s(z,v(p)),v(p)),\gamma'_0(\gamma_s(p,v(p)),q)\right>_\omega ds>tA_0.
\end{multline}
If instead we have $z\in B(p,R_0)\cap\overline\Omega$ and $0\leq t<R_0-\dist(z,p)$ with $\gamma_{-t}(z,v(p))\in\overline\Omega$, we also have
\begin{multline}
\label{eq:delta_geodesic_estimate_outward}
  \delta(z)-\delta(\gamma_{-t}(z,v(p)))\\
  =\int_{\dist(z,v(p))}^t \sup_{q\in\Pi_\Omega(\gamma_{-s}(p,v(p)))}\left<-\gamma'_0(\gamma_{-s}(z,v(p)),v(p)),\gamma'_0(\gamma_{-s}(p,v(p)),q)\right>_\omega ds\\
  >tA_0.
\end{multline}

As a partial converse to Lemma \ref{lem:transverse_vector_field}, we observe that the map $v$ is sufficient to locally parameterize the boundary in terms of a Lipschitz function:
\begin{lem}
\label{lem:projection_to_boundary}
  Let $\Omega\subset\mathbb{CP}^n$ be a Lipschitz domain, and let $v$, $R_0$, and $A_0$ be given by Lemma \ref{lem:transverse_vector_field}.  Then for every $p\in\partial\Omega$ there exists a map $\pi_p:B\left(p,\frac{R_0}{1+A_0^{-1}}\right)\cap\Omega\rightarrow\mathbb{R}$ such that $\delta(z)\leq \pi_p(z)< A_0^{-1}\delta(z)$ and $\gamma_{-\pi_p(z)}(z,v(p))\in\partial\Omega$.  Furthermore, for any $0<A<A_0$, there exists $0<R_A<\frac{R_0}{1+A_0^{-1}}$ such that on $B(p,R_A)$, $\pi_p$ is a Lipschitz map with Lipschitz constant $A^{-1}$.
\end{lem}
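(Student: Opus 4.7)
The plan is to define $\pi_p(z)$ as the first positive time $t$ at which the backward geodesic $\gamma_{-t}(z,v(p))$ meets $\partial\Omega$, then read off the two-sided bound on $\pi_p$ from the transversality estimate \eqref{eq:delta_geodesic_estimate_outward}, and finally obtain the Lipschitz property by combining \eqref{eq:delta_geodesic_estimate_outward} with the Lipschitz bound \eqref{eq:lipschitz_geodesic_p} of Corollary \ref{cor:lipschitz_geodesic}.

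To verify existence and the two-sided estimate, I would first observe that for $z \in B(p, R_0/(1+A_0^{-1}))\cap\Omega$ the triangle inequality gives $\dist(\gamma_{-t}(z,v(p)),p) \leq t + \dist(z,p)$, and for $t \leq A_0^{-1}\delta(z) < R_0/(A_0+1)$ this keeps $\gamma_{-t}(z,v(p))$ inside $B(p, R_0)$, so \eqref{eq:delta_geodesic_estimate_outward} is legitimately applicable. That inequality forces $\delta$ to strictly decrease along the backward geodesic at rate at least $A_0$ as long as we remain in $\overline\Omega$, so a boundary crossing is guaranteed and must occur before parameter $A_0^{-1}\delta(z)$; this yields $\pi_p(z)<A_0^{-1}\delta(z)$. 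The lower bound $\pi_p(z) \geq \delta(z)$ is immediate since $\gamma_{-\pi_p(z)}(z,v(p))\in\partial\Omega$ is at geodesic distance $\pi_p(z)$ from $z$.

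For the Lipschitz estimate, fix $z_1, z_2 \in B(p,R_A)\cap\Omega$, set $t_i = \pi_p(z_i)$, and assume without loss of generality $t_1 \leq t_2$. The key idea is to compare the two backward geodesics at the common parameter $-t_1$. By \eqref{eq:lipschitz_geodesic_p}, the map $z \mapsto \gamma_{-t_1}(z,v(p))$ is Lipschitz with a constant $L_1$ that tends to $1$ as $t_1 \to 0$ and $\dist(z,v(p))\to\pi/4$; thus by shrinking $R_A$ I can arrange $L_1$ to be as close to $1$ as desired. Since $\gamma_{-t_1}(z_1,v(p)) \in \partial\Omega$ and $\delta$ is $1$-Lipschitz, one has $\delta(\gamma_{-t_1}(z_2,v(p))) \leq L_1 \dist(z_1,z_2)$. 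If $\gamma_{-t_1}(z_2,v(p)) \notin \Omega$ then $t_2 \leq t_1$ by continuity along the geodesic from $z_2$; otherwise a second application of \eqref{eq:delta_geodesic_estimate_outward}, starting now from $\gamma_{-t_1}(z_2,v(p))$ (which still lies in $B(p,R_0)$ provided $R_A$ is sufficiently small), forces the first boundary crossing to occur within additional parameter $A_0^{-1}L_1\dist(z_1,z_2)$, so $t_2 - t_1 \leq A_0^{-1} L_1 \dist(z_1,z_2)$. Given $0<A<A_0$, I would shrink $R_A$ so that $L_1 \leq A_0/A$, which is possible because $A_0/A>1$; this produces the required Lipschitz constant $A^{-1}$.

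The only subtleties I expect are bookkeeping ones: verifying that every backward geodesic under consideration remains inside $B(p,R_0)$ so that \eqref{eq:delta_geodesic_estimate_outward} may be invoked, and quantifying the convergence $L_1 \to 1$ from the explicit bound in Corollary \ref{cor:lipschitz_geodesic}. Neither requires any ingredient beyond the two cited estimates.
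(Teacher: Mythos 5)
Your proposal is correct and follows essentially the same route as the paper: define $\pi_p$ as the first backward crossing time, use \eqref{eq:delta_geodesic_estimate_outward} for the two-sided bound and for existence, and then obtain the Lipschitz estimate by comparing the two backward geodesics at the smaller parameter, controlling the geodesic map's Lipschitz constant via \eqref{eq:lipschitz_geodesic_p} (which is exactly the quantity the paper pins to $A_0A^{-1}$, i.e., your $L_1$), and applying \eqref{eq:delta_geodesic_estimate_outward} once more to bound the parameter increment. The only differences are cosmetic: the paper skips your case split (since $t_1\leq t_2$ already forces $\gamma_{-t_1}(z_2,v(p))\in\overline\Omega$) and phrases the bound $\delta(\hat w)\leq A_0A^{-1}\dist(z,w)$ directly from $\hat z\in\partial\Omega$ rather than via $1$-Lipschitz continuity of $\delta$, but these are the same estimate.
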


\begin{rem}
  If we choose a real hypersurface such that $\gamma_t(z,v(p))$ is transverse to this hypersurface near $p$, then $\pi_p(z)$ allows us to locally express $\partial\Omega$ as a Lipschitz graph over this hypersurface.
\end{rem}

\begin{rem}
  With some additional work, we can show that $\pi_p(z)+\dist(z,v(p))$ is Lipschitz in $z$ with a constant on the order of $\sqrt{A^{-2}-1}$.  On $C^1$ domains we can take the constant $A$ arbitrarily close to $1$ by choosing $R_0$ sufficiently small, so this would allow us to take $\tau$ arbitrarily close to $1$ in Lemma \ref{lem:boundary_sequence} below.  Since we choose to focus on cases where $A$ is very close to $0$, we omit the additional computations necessary for this refinement.
\end{rem}

\begin{proof}
  Let $z\in B\left(p,\frac{R_0}{1+A_0^{-1}}\right)\cap\Omega$.  This implies $\delta(z)<\frac{R_0}{1+A_0^{-1}}$ as well, so $\dist(z,p)+A_0^{-1}\delta(z)<R_0$.  If $t$ satisfies $R_0-\dist(z,p)>t\geq A_0^{-1}\delta(z)$ and $\gamma_{-t}(z,v(p))\in\overline\Omega$, we would have a contradiction with \eqref{eq:delta_geodesic_estimate_outward}, so $\gamma_{-t}(z,v(p))\notin\overline\Omega$ whenever $t\geq A_0^{-1}\delta(z)$.  Hence, there must exist $\pi_p(z)$ satisfying $\delta(z)\leq \pi_p(z)<A_0^{-1}\delta(z)$ such that $\gamma_{-\pi_p(z)}(z,v(p))\in\partial\Omega$.

  By \eqref{eq:lipschitz_geodesic_p}, the derivative of $\gamma_{-t}(z,v(p))$ with respect to $z$ is bounded by $1$ when $z=p$ and $t=0$, so we can choose $0<R_A\leq\frac{R_0}{1+A_0^{-1}}$ so that $\gamma_{-t}(z,v(p))$ has a Lipschitz constant of $A_0 A^{-1}$ in $z$ for $z\in B(p,R_A)$ and $\delta(z)\leq t\leq A_0^{-1}\delta(z)$.  For $z,w\in B(p,R_A)\cap\Omega$, we may assume that $\pi_p(w)>\pi_p(z)$.  Let $\hat{w}=\gamma_{-\pi_p(z)}(w,v(p))$ and $\hat{z}=\gamma_{-\pi_p(z)}(z,v(p))$.  Note that $\hat{z}\in\partial\Omega$, and $\dist(\hat{z},\hat{w})\leq A_0 A^{-1}\dist(z,w)$.  Hence $\delta(\hat{w})\leq A_0 A^{-1}\dist(z,w)$.  However, \eqref{eq:delta_geodesic_estimate_outward} implies $\delta(\hat{w})>(\pi_p(w)-\pi_p(z))A_0$, so we have $\abs{\pi_p(w)-\pi_p(z)}\leq  A^{-1}\dist(z,w)$.
\end{proof}

When we work locally, the geometric quantities of greatest interest will be the distance to the boundary and the distance to a geodesic transverse to the boundary, so it will make sense to define our local neighborhoods in terms of these quantities.  However, we will still need to know that these new neighborhoods are uniformly comparable to the usual geodesic balls, as shown in the following lemma.

\begin{lem}
\label{lem:neighborhood_base}
  Let $\Omega\subset\mathbb{CP}^n$ be a Lipschitz domain and let $v$, $R_0$, and $A_0$ be given by Lemma \ref{lem:transverse_vector_field}.  For every $p\in\partial\Omega$, let $\Gamma(p,S,T)$ denote the connected component of
  \[
    \set{z\in\overline\Omega:\dist(z,\gamma(p,v(p)))\leq S\text{ and }\delta(z)\leq T}
  \]
  containing $p$.  Given $0<A<A_0$ there exists $0<R_A<R_0$ such that $\gamma(p,S,T)\subset B(p,R)$ whenever $S,T>0$ and $(1+A^{-1})S+A^{-1}T\leq R<R_A$.
\end{lem}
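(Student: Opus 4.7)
The plan is to show the stronger bound $\dist(z,p)<(1+A_0^{-1})S+A_0^{-1}T$ for every $z\in\Gamma(p,S,T)$, which is strictly less than $(1+A^{-1})S+A^{-1}T\leq R$ because $A<A_0$ forces $A^{-1}>A_0^{-1}$. Given such a $z$, I will pick a closest point $w\in\gamma(p,v(p))$ with $\dist(z,w)\leq S$ and parameterize $w=\gamma_{t_w}(p,v(p))$. Since $\gamma(p,v(p))$ is a closed great circle of circumference $\pi$ in $\mathbb{CP}^n$ and $\dist(w,p)\leq S+R$, for $R_A$ small enough the parameter $t_w$ lies in one of two disjoint arcs near $p$: either $t_w\in[0,R_0)$ (the arc pointing into $\Omega$) or $t_w\in(\pi-R_0,\pi)$ (the arc locally outside $\overline\Omega$).

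In the inward case $t_w\in[0,R_0)$, the transversality estimate \eqref{eq:delta_geodesic_estimate_inward}, applied with the role of $z$ in that estimate played by $p$, yields $\delta(w)>t_w A_0$. Since $\delta$ is $1$-Lipschitz, $\delta(w)\leq\delta(z)+\dist(z,w)\leq T+S$, so $t_w<(T+S)/A_0$. The triangle inequality then produces
\[
  \dist(z,p)\leq\dist(z,w)+\dist(w,p)\leq S+t_w<(1+A_0^{-1})S+A_0^{-1}T<R,
\]
which is the desired conclusion.

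The main obstacle is handling the outward case, since there is no estimate directly analogous to \eqref{eq:delta_geodesic_estimate_inward} in the excerpt for the outward geodesic direction. My strategy here is to mirror the inward computation by establishing the outward bound $\dist(\gamma_{-\tilde t}(p,v(p)),\partial\Omega)\geq A_0\tilde t$ (up to higher-order terms) directly from the Lipschitz graph representation \eqref{eq:graph_definition} of $\partial\Omega$, using that $v(p)$ in the local coordinates of Lemma \ref{lem:transverse_vector_field} is transverse to $\partial\Omega$ with an angle controlled by the graph's Lipschitz constant $M$, exactly in the spirit of the derivation of $A_p$ in the proof of Lemma \ref{lem:transverse_vector_field} and with the Fubini--Study-to-Euclidean approximation \eqref{eq:approximate_distance}. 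Since the geodesic from $z\in\overline\Omega$ to $w\notin\overline\Omega$ must cross $\partial\Omega$, one has $\dist(w,\partial\Omega)\leq\dist(w,z)+\delta(z)\leq S+T$; combined with the lower bound this forces $\tilde t:=\pi-t_w<(S+T)/A_0$, and $\dist(z,p)\leq\dist(z,w)+\dist(w,p)\leq S+\tilde t$ then closes the argument as in the inward case. It remains to choose $R_A$ small enough that $R_A<\pi/4$, that the metric approximation \eqref{eq:approximate_distance} has controllable error, and that $R+S<R_0$ whenever $(1+A^{-1})S+A^{-1}T\leq R<R_A$, so that the two cases are genuinely exhaustive.
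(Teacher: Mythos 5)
Your treatment of the inward case ($t_w\geq 0$) is correct, and it is in fact slightly cleaner than the paper's: you apply \eqref{eq:delta_geodesic_estimate_inward} at $z=p$ to get $\delta(\gamma_{t_w}(p,v(p)))>A_0 t_w$, combine with the triangle inequality $\delta(w)\leq \delta(z)+\dist(z,w)\leq T+S$, and conclude $t_w<A_0^{-1}(S+T)$. The paper instead routes this through the Lipschitz map $\pi_p$ from Lemma~\ref{lem:projection_to_boundary}, which gives the marginally weaker $t_0\leq\tilde A^{-1}S+A_0^{-1}T$. So your inward argument is a genuine, correct alternative.

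The outward case is a real gap. You need a lower bound of the form $\dist(\gamma_{-\tilde t}(p,v(p)),\partial\Omega)\geq c\,\tilde t$, but nothing in the paper supplies this, and you only sketch how you would derive it ``in the spirit of the derivation of $A_p$.'' That derivation in Lemma~\ref{lem:transverse_vector_field} establishes \eqref{eq:vector_inner_product} only for $z\in B(p,R_0)\cap\Omega$: the proof crucially uses $B(z_j,\delta(z_j))\subset\Omega$ and the resulting sign constraint $\im z_j^n<\im q_j^n$ on the nearest boundary point. For $z\notin\overline\Omega$ the transversality inner product changes sign (moving toward $v(p)$ now moves toward the nearest boundary point rather than away from it), so \eqref{eq:vector_inner_product} does not hold and you would have to prove a mirror-image inequality from scratch for exterior points and re-run \eqref{eq:directional_derivative} along the outward arc. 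This is roughly the same amount of work as the original transversality lemma. The paper avoids it entirely: in the outward case it uses the identity $\pi_p(\gamma_{-t_0}(z,v(p)))=\pi_p(z)-t_0$ together with the Lipschitz constant of $\pi_p$ from Lemma~\ref{lem:projection_to_boundary} and the Lipschitz constant of $z\mapsto\gamma_{-t_0}(z,v(p))$ from Corollary~\ref{cor:lipschitz_geodesic}, which stays entirely inside $\overline\Omega$ and needs no exterior estimate. You should either supply the exterior transversality estimate in full or adopt the $\pi_p$ argument for the outward direction.

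Finally, you bound $\dist(w,p)\leq S+R$ by implicitly assuming $\dist(z,p)\leq R$, which is what you are trying to prove. The paper handles this by first proving the bound for $z\in\Gamma(p,S,T)\cap B(p,R_A)$ and then invoking the connectedness of $\Gamma(p,S,T)$ (and $p\in\Gamma(p,S,T)$) to upgrade to $\Gamma(p,S,T)\subset B(p,R)$. You allude to choosing $R_A$ so the two arcs are exhaustive, but you never state the connectedness step; it should be made explicit.
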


\begin{proof}
  Choose $A<\tilde{A}<A_0$ and let $\pi_p$ and $R_{\tilde{A}}$ be given by Lemma \ref{lem:projection_to_boundary} so that $\pi_p$ has Lipschitz constant ${\tilde{A}}^{-1}$ on $B(p,R_{\tilde{A}})$.  By \eqref{eq:lipschitz_geodesic_p}, the derivative of $\gamma_t(z,v(p))$ with respect to $z$ is bounded by $1$ whenever $z=p$ and $t=0$, so we may also choose $0<R_A<R_{\tilde{A}}$ so that $\gamma_t(z,v(p))$ has Lipschitz constant $\tilde{A}A^{-1}$ with respect to $z$ whenever $z\in B(p,R_A)$ and $\abs{t}\leq R_A$.  For $0<R<R_A$, choose $S<\frac{AR}{1+A}$ and $T\leq AR-(A+1)S$.  Observe that the inequality characterizing $S$ is equivalent to $A^{-1}S<R-S$ and the inequality characterizing $T$ is equivalent to $A^{-1}(S+T)\leq R-S$.  Let $z\in\Gamma(p,S,T)\cap B(p,R_A)$, and let $t_0$ denote the value of $t$ minimizing the distance from $z$ to $\gamma_t(p,v(p))$.  Note that $\abs{t_0}<R_A$.

  If $t_0<0$, then since $z\in B(p,R_A)$ and $\gamma_{t_0}(v(p),p)\in B(p,R_A)$, we use the Lipschitz property of $\gamma_{-t_0}(\cdot,v(p))$ to obtain
  \[
    \dist(\gamma_{-t_0}(z,v(p)),p)=\dist(\gamma_{-t_0}(z,v(p)),\gamma_{-t_0}(\gamma_{t_0}(p,v(p)),v(p)))\leq \tilde{A}A^{-1}S.
  \]
  Since $\pi_p(p)=0$, we can couple this with the Lipschitz property of $\pi_p$ to obtain $\pi_p(\gamma_{-t_0}(z,v(p)))\leq A^{-1}S$.  We know $\pi_p(\gamma_{-t_0}(z,v(p)))=\pi_p(z)-t_0$, so $\pi_p(z)-t_0\leq A^{-1}S$.  Since $\pi_p(z)\geq 0$, we have $-t_0\leq A^{-1}S<R-S$.

  If $t_0\geq 0$, we note that $\pi_p(\gamma_{t_0}(p,v(p)))=t_0$, so we again use the Lipschitz property of $\pi_p$ to estimate $\abs{\pi_p(z)-t_0}\leq \tilde{A}^{-1}S$.  From Lemma \ref{lem:projection_to_boundary} we know $\pi_p(z)<A_0^{-1}\delta(z)\leq A_0^{-1}T$, so $t_0\leq \tilde{A}^{-1}S+A_0^{-1}T<A^{-1}(S+T)\leq R-S$.

  Combining both cases, we have $\abs{t_0}<R-S$.  Hence, $\dist(z,p)\leq S+\abs{t_0}<R$.  Since $\Gamma(p,S,T)\cap B(p,R_A)\subset B(p,R)$ and $\Gamma(p,S,T)$ was defined to be connected, we conclude that $\Gamma(p,S,T)\subset B(p,R)$.

\end{proof}

We will also need to show that the map $\gamma_t(p,v(p)):(0,\eps)\times\partial\Omega\rightarrow\Omega$ is invertible near the boundary for $\eps>0$ sufficiently small.  On $C^1$ domains this would follow from the implicit function theorem, but this does not apply on Lipschitz domains, so we will have to find a suitable substitute by careful analysis of the properties of $v$.

\begin{lem}
\label{lem:surjection}
  Let $\Omega\subset\mathbb{CP}^n$ be a Lipschitz domain and let $v$ be given by Lemma \ref{lem:transverse_vector_field}.  Then there exists $T_1>0$ such that for every $z\in\overline\Omega$ satisfying $\delta(z)\leq T_1$ there exists a unique $p\in\partial\Omega$ and $0\leq t<A_0^{-1}\delta(z)$ such that $[\gamma_t(p,v(p))]=[z]$.
\end{lem}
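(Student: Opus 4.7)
The plan is to realize $(p,t)$ as the unique fixed point of a contraction on $\partial\Omega$. The case $z \in \partial\Omega$ is trivial: the only admissible pair is $(z,0)$ since the strict inequality $t < A_0^{-1}\delta(z) = 0$ rules out $t > 0$. So assume $0 < \delta(z) \leq T_1$, with $T_1 > 0$ small (to be fixed in the course of the argument), fix any $p_0 \in \Pi_\Omega(z)$, and let $\rho > 0$ be a small radius so that Lemma \ref{lem:projection_to_boundary} (with some fixed $0 < A < A_0$) applies uniformly at every point of $B := \overline{B(p_0, \rho)} \cap \partial\Omega$. Define
\[
  G_z : B \to \partial\Omega, \qquad G_z(p) = \gamma_{-\pi_p(z)}(z, v(p)),
\]
which lands in $\partial\Omega$ by construction. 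The strict monotonicity underlying \eqref{eq:delta_geodesic_estimate_outward} shows that $t \mapsto \delta(\gamma_{-t}(z, v(p)))$ has a unique zero, so any admissible pair $(p,t)$ with $[z] = [\gamma_t(p, v(p))]$ and $0 \leq t < A_0^{-1}\delta(z)$ must satisfy $t = \pi_p(z)$ and hence $p = G_z(p)$; moreover $\dist(p,z) = t < A_0^{-1}T_1$, so such a $p$ is automatically in $B$ once $T_1$ is small. The problem thus reduces to proving that $G_z$ has a unique fixed point in $B$.

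The crux is the contraction estimate. For $p_1, p_2 \in B$, set $t_i = \pi_{p_i}(z)$ and split
\[
  \dist(G_z(p_1), G_z(p_2)) \leq |t_1 - t_2| + \dist\bigl(\gamma_{-t_2}(z, v(p_1)), \gamma_{-t_2}(z, v(p_2))\bigr).
\]
For the second term, $\dist(z, v(p))$ is uniformly close to $\pi/4$ on $B$ and $t_2 \leq A_0^{-1}T_1$, so by \eqref{eq:lipschitz_geodesic_q} and the global Lipschitz constant $L$ of $v$ from Lemma \ref{lem:transverse_vector_field}, the second term is bounded by $C T_1 \dist(p_1, p_2)$. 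For the first term, assume WLOG $t_1 \leq t_2$. Applying \eqref{eq:delta_geodesic_estimate_outward} on the segment from $\gamma_{-t_1}(z, v(p_2))$ to $\gamma_{-t_2}(z, v(p_2))$ gives $\delta(\gamma_{-t_1}(z, v(p_2))) \geq A_0(t_2 - t_1)$. On the other hand, $\gamma_{-t_1}(z, v(p_1)) \in \partial\Omega$ and the $1$-Lipschitz property of $\delta$ force
\[
  \delta(\gamma_{-t_1}(z, v(p_2))) \leq \dist\bigl(\gamma_{-t_1}(z, v(p_1)), \gamma_{-t_1}(z, v(p_2))\bigr) \leq C T_1 \dist(p_1, p_2),
\]
where the last bound is again \eqref{eq:lipschitz_geodesic_q} plus the Lipschitz constant of $v$. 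Hence $|t_1 - t_2| \leq (C T_1 / A_0) \dist(p_1, p_2)$, and $G_z$ is $O(T_1)$-Lipschitz on $B$, a strict contraction for $T_1$ small.

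To finish via Banach's fixed point theorem I verify $G_z(B) \subset B$: since $G_z(p_0) \in \partial\Omega$, the triangle inequality gives $\dist(G_z(p_0), p_0) \leq \pi_{p_0}(z) + \delta(z) \leq (A_0^{-1}+1)T_1$, so shrinking $T_1$ so that $(A_0^{-1}+1)T_1 + (\text{Lipschitz constant})\,\rho \leq \rho$ propagates this into $G_z(B) \subset B$. The unique fixed point $p^*$ together with $t = \pi_{p^*}(z) \in [\delta(z), A_0^{-1}\delta(z))$ provides existence, and uniqueness follows by the reduction at the end of the first paragraph.

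The main obstacle is estimating the variation of $\pi_p(z)$ in $p$ for fixed $z$: Lemma \ref{lem:projection_to_boundary} only controls the dependence in $z$ for fixed $p$, and the implicit function theorem is not available on a Lipschitz boundary. The comparison argument above circumvents this by combining the universal $1$-Lipschitz property of $\delta$ with the uniform transversal lower slope $A_0$ from \eqref{eq:delta_geodesic_estimate_outward}, which together pin down the first boundary hit along one geodesic in terms of the first boundary hit along a nearby one.
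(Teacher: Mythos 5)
Your proposal is correct, and it takes a genuinely different route from the paper. The paper splits the statement into two contradiction arguments: uniqueness is handled by exact spherical-trigonometric identities relating $\dist(p_j,q_j)$ and $\dist(v(p_j),v(q_j))$ to the angle defect $1-\gamma_0'(z_j,v(p_j))\cdot\overline{\gamma_0'(z_j,v(q_j))}$, combined with the Lipschitz bound on $v$; existence is obtained by assuming a point $z$ outside the range, taking the nearest point $w$ in the range, and using Lemma \ref{lem:projection_to_boundary} to manufacture nearer points in the range, a contradiction. You instead realize both at once as the unique fixed point of the map $G_z(p)=\gamma_{-\pi_p(z)}(z,v(p))$ on $\overline{B(p_0,\rho)}\cap\partial\Omega$ via Banach's theorem. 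The key step your argument supplies, which is nowhere in the paper, is the estimate
\[
  |\pi_{p_1}(z)-\pi_{p_2}(z)|\leq A_0^{-1}\,\delta\!\left(\gamma_{-t_1}(z,v(p_2))\right)\leq A_0^{-1}\cdot CT_1\dist(p_1,p_2),
\]
which sidesteps the inaccessible implicit function theorem by comparing the first boundary hit along $\gamma(z,v(p_2))$ to that along $\gamma(z,v(p_1))$ using only the $1$-Lipschitz property of $\delta$ and the uniform lower slope $A_0$ from \eqref{eq:delta_geodesic_estimate_outward}. This is clean and arguably more transparent than the paper's trigonometric computation. What the paper's approach buys in exchange is that its uniqueness argument requires no discussion of well-posedness of $\pi_p$ as a single-valued map; it works directly with the hypothesis $[\gamma_{t_j}(p_j,v(p_j))]=[\gamma_{t_j}(q_j,v(q_j))]$.

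Two points deserve a sentence or two more care. First, your reduction to a fixed-point equation asserts that $t\mapsto\delta(\gamma_{-t}(z,v(p)))$ has a unique zero on $[0,A_0^{-1}\delta(z))$, but the monotonicity in \eqref{eq:delta_geodesic_estimate_outward} only controls the derivative while the moving point stays in $\overline\Omega$; you should add the observation that re-entry into $\overline\Omega$ after the first exit is impossible, since applying \eqref{eq:delta_geodesic_estimate_outward} at a putative re-entry boundary point $w$ would force $\delta(\gamma_{-\eps}(w,v(p)))<-\eps A_0<0$. Second, you should note explicitly that $B$ is compact (hence complete) and that $G_z$ is well-defined on all of $B$ because $\dist(z,p)\leq\delta(z)+\rho<R_A$ uniformly over $p\in B$ for $T_1+\rho$ small, so Lemma \ref{lem:projection_to_boundary} applies at each $p\in B$. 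With these additions the argument is complete.
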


\begin{proof}
  We begin with the proof of uniqueness.  Suppose that for every $j\in\mathbb{N}$ there exists $z_j\in\Omega$ such that $\delta(z_j)\leq\frac{1}{j}$ and there exist $0<t_j\leq A_0^{-1}\delta(z_j)$ and $p_j,q_j\in\partial\Omega$ such that $[p_j]\neq [q_j]$ and $[z_j]=[\gamma_{t_j}(p_j,v(p_j))]=[\gamma_{t_j}(q_j,v(q_j))]$.  Set $x_j+iy_j=1-\gamma_0'(z_j,v(p_j))\cdot\overline{\gamma_0'(z_j,v(q_j))}$ for $x_j,y_j\in\mathbb{R}$, and note that $0\leq x_j\leq 2$ and $\abs{y_j}\leq\sqrt{2x_j-x_j^2}$.  Observe that
  \begin{multline*}
    \gamma_{-t_j}(z_j,v(p_j))\cdot\overline{\gamma_{-t_j}(z_j,v(q_j))}=\cos^2 t_j+\sin^2 t_j\gamma_0'(z_j,v(p_j))\cdot\overline{\gamma_0'(z_j,v(q_j))}\\
    =1-\sin^2 t_j(x_j+iy_j),
  \end{multline*}
  so
  \begin{multline*}
    \sin^2(\dist(p_j,q_j))=\sin^2(\dist(\gamma_{-t_j}(z_j,v(p_j)),\gamma_{-t_j}(z_j,v(q_j))))\\
    =1-\abs{1-\sin^2 t_j(x_j+iy_j)}^2
    =2x_j\sin^2 t_j-(x_j^2+y_j^2)\sin^4 t_j.
  \end{multline*}
  Since $\cos^2(\pi/4-t_j)=\frac{1}{2}+\sin t_j\cos t_j$ and $\sin^2(\pi/4-t_j)=\frac{1}{2}-\sin t_j\cos t_j$,
  \begin{multline*}
    \gamma_{\pi/4-t_j}(z_j,v(p_j))\cdot\overline{\gamma_{\pi/4-t_j}(z_j,v(q_j))}\\
    =\left(\frac{1}{2}+\sin t_j\cos t_j\right)+\left(\frac{1}{2}-\sin t_j\cos t_j\right)(1-x_j-iy_j)\\
    =1-\frac{x_j+iy_j}{2}+\sin t_j\cos t_j(x_j+iy_j),
  \end{multline*}
  so
  \begin{multline*}
    \sin^2(\dist(v(p_j),v(q_j)))=\sin^2(\dist(\gamma_{\pi/4-t_j}(z_j,v(p_j)),\gamma_{\pi/4-t_j}(z_j,v(q_j))))\\
    =1-\abs{1-\frac{x_j+iy_j}{2}+\sin t_j\cos t_j(x_j+iy_j)}^2\\
    =x_j-\frac{1}{4}(x_j^2+y_j^2)
    -\sin t_j\cos t_j\left(2x_j-x_j^2-y_j^2\right)
    -\sin^2 t_j\cos^2 t_j(x_j^2+y_j^2).
  \end{multline*}
  Since $t_j\rightarrow 0$, $\dist(p_j,q_j)\rightarrow 0$ as well.  Since $v$ is Lipschitz, we may assume that there exists $M>0$ such that $\sin^2(\dist(v(p_j),v(q_j)))\leq M\sin^2(\dist(p_j,q_j))$ for sufficiently large $j$.  Hence,
  \begin{multline*}
    \frac{1}{4}(x_j^2+y_j^2)-x_j+\sin t_j\cos t_j\left(2x_j-x_j^2-y_j^2\right)+2Mx_j\sin^2 t_j\\
    +\sin^2 t_j\cos^2 t_j(x_j^2+y_j^2)-M(x_j^2+y_j^2)\sin^4 t_j\geq 0.
  \end{multline*}
  The coefficient of $y_j^2$ will be positive for sufficiently large $j$, so we can substitute $\abs{y_j}\leq\sqrt{2x_j-x_j^2}$ and obtain
  \[
    -\frac{1}{2}x_j+2Mx_j\sin^2 t_j+2\sin^2 t_j\cos^2 t_jx_j-2Mx_j\sin^4 t_j\geq 0.
  \]
  If $x_j>0$, we can divide by $x_j$ and obtain a contradiction for sufficiently large $j$.  Hence $x_j=y_j=0$ for all sufficiently large $j$.  However, this implies $\dist(p_j,q_j)=0$, another contradiction.  We conclude that there exists $T_0>0$ such that if $z\in\overline\Omega$ satisfies $\delta(z)\leq T_0$ and there exist $p\in\Omega$ and $0\leq t<A_0^{-1}\delta(z)$ such that $[\gamma_t(p,v(p))]=[z]$, then no other such $t$ and $p$ exist.

  Turning to our existence proof, let $A_0$ be given by Lemma \ref{lem:transverse_vector_field} and for $0<A<A_0$ let $R_A$ be given by Lemma \ref{lem:projection_to_boundary}.  Choose any $T_1<\min\set{T_0/2,A_0 R_A/2}$.

  Suppose that there exists $z\in\Omega$ such that $[z]$ is not in the range of $[\gamma_t(p,v(p))]$ and $\delta(z)\leq T_1$.  Let $w$ be the closest point to $z$ in $\overline\Omega$ that does lie in the range of $[\gamma_t(p,v(p))]$ for $p\in\partial\Omega$ and $0\leq t\leq\frac{\pi}{2}$.  Since $[\gamma_0(p,v(p))]=[p]$, every point in the boundary lies in the range of $\gamma_t(p,v(p))$, so $\dist(w,z)\leq\delta(z)$.  Therefore, $\delta(w)\leq 2\delta(z)$.  Fix $q\in\partial\Omega$ and $0\leq s\leq\frac{\pi}{2}$ so that $[w]=[\gamma_s(q,v(q))]$.  We may choose a representative element for $[q]$ to satisfy $q=\gamma_{-s}(w,v(q))$.  From \eqref{eq:delta_geodesic_estimate_outward}, $s<A_0^{-1}\delta(w)$, so $s<2A_0^{-1}\delta(z)\leq 2A_0^{-1}T_1$.

  For $j\in\mathbb{N}$, let $d_j=\dist(z,w)/j$ and set $z_j=\gamma_{d_j}(w,z)$, so that $[z_j]\rightarrow [w]$.  Observe that
  \[
    \delta(z_j)\leq\delta(z)+(j-1)d_j\leq (2j-1)\delta(z)/j
  \]
  and
  \[
    \dist(z_j,q)\leq s+d_j<(2A_0^{-1}+1/j)\delta(z).
  \]
  For $j$ sufficiently large, we have $\dist(z_j,q)<2A_0^{-1}T_1<R_A$.  Hence, Lemma \ref{lem:projection_to_boundary} gives us $\pi_q(z_j)$ with Lipschitz constant $A^{-1}$ satisfying $\delta(z_j)\leq \pi_q(z_j)<A_0^{-1}\delta(z_j)$ and $\gamma_{-\pi_q(z_j)}(z_j,v(q))\in\partial\Omega$.  Set $s_j=\pi_q(z_j)$ and $q_j=\gamma_{-s_j}(z_j,v(q))$.  By Lemma \ref{lem:projection_to_boundary}, $\abs{s_j-s}\leq A^{-1}d_j$.  Using \eqref{eq:lipschitz_geodesic_p}, $\gamma_{-s}(\cdot,v(q))$ is Lipschitz, so $\dist(q_j,q)\leq O(d_j)$.  Furthermore, $[q_j]\neq[q]$, since otherwise $[z_j]=[\gamma_{s_j}(q,v(q))]$, contradicting the assumption that $w$ is the closest point to $z$ with this property.

  Set $w_j=\gamma_s(q_j,v(q_j))$.  We know that $v$ is Lipschitz, so since Corollary \ref{cor:lipschitz_geodesic} implies that both $\gamma_s(\cdot,v(q_j))$ and $\gamma_s(q_j,\cdot)$ are Lipschitz, we have $\dist(w_j,w)\leq O(d_j)$.  Since $[q_j]\neq [q]$ and $\delta(w)\leq T_0$, we may use our uniqueness result to show $[w_j]\neq [w]$ for sufficiently large $j$.  Furthermore, since $[z_j]=[\gamma_{s_j}(q_j,v(q))]$, we must also have $\dist(z_j,w_j)<O(d_j)$.  By assumption $\dist(z,w_j)\geq\dist(z,w)$.  Using \eqref{eq:point_distance_derivative}, we have
  \[
    0\leq\frac{\dist(w_j,z)-\dist(w,z)}{\dist(w,w_j)}\leq-\left<\gamma_0'(w,w_j),\gamma_0'(w,z)\right>_\omega+O(d_j).
  \]
  Hence, $\left<\gamma_0'(w,w_j),\gamma_0'(w,z)\right>_\omega\leq O(d_j)$.  However, we know $\left<\gamma_0'(w,z_j),\gamma_0'(w,z)\right>_\omega=1$, so since $\dist(z_j,w_j)<O(d_j)$, we are left with $1\leq O(d_j)$, which is a contradiction for sufficiently large $j$.

\end{proof}

Our last result in this section shows that we can always find tangent vectors in a given direction that are uniformly transverse to $\gamma_t(p,v(p))$.

\begin{lem}
\label{lem:boundary_sequence}
  Let $\Omega\subset\mathbb{CP}^n$ be a Lipschitz pseudoconvex domain.  Let $v$ and $A_0$ be as in Lemma \ref{lem:transverse_vector_field}.  Then for every $p,q\in\partial\Omega$ satisfying $0<\dist(p,q)<\frac{\pi}{2}$, we have $u\in\Tan(\partial\Omega,p)$ and real constants $\nu$ and $\tau$ such that $A_0\abs{\nu}\leq\tau$ and
  \[
  \label{eq:boundary_sequence_estimate}
    u=\nu\gamma_0'(p,v(p))+\tau\gamma_0'(p,q).
  \]
  In particular, $\tau\geq\frac{A_0}{1+A_0}$.
\end{lem}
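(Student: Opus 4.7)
The plan is to produce the required $u \in \Tan(\partial\Omega, p)$ by constructing a sequence $\{p_{s_j}\} \subset \partial\Omega$ with $p_{s_j} \to p$ whose secant directions $\gamma_0'(p, p_{s_j})$ converge to a unit vector of the form $\nu \gamma_0'(p, v(p)) + \tau \gamma_0'(p, q)$. The construction moves along the geodesic from $p$ toward $q$ and then projects back onto $\partial\Omega$ along the transverse direction $v(p)$, using (a two-sided extension of) the map $\pi_p$ from Lemma \ref{lem:projection_to_boundary}.

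Fix $0 < A < A_0$ and set $z_s = \gamma_s(p, q)$ for small $s > 0$. If $z_s \in \Omega$, let $p_s = \gamma_{-\pi_p(z_s)}(z_s, v(p)) \in \partial\Omega$; the Lipschitz constant $A^{-1}$ of $\pi_p$ together with $\pi_p(p) = 0$ yields $0 \leq \pi_p(z_s) \leq A^{-1} s$. If $z_s \notin \bar\Omega$, let $\tilde\pi_p(z_s) < 0$ be the unique $t$ with $\gamma_{-t}(z_s, v(p)) \in \partial\Omega$ and set $p_s = \gamma_{-\tilde\pi_p(z_s)}(z_s, v(p))$; the analogous Lipschitz estimate gives $|\tilde\pi_p(z_s)| \leq A^{-1} s$ (the case $z_s \in \partial\Omega$ is trivial). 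In all cases $p_s \to p$, and by compactness I extract a subsequence $s_j \to 0^+$ along which $\tilde\pi_p(z_{s_j})/s_j \to \rho$ for some $|\rho| \leq A^{-1}$; since $A < A_0$ was arbitrary, $|\rho| \leq 1/A_0$.

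Working in local coordinates at $p$ in which $p$ is the origin and $v(p) = [0:\cdots:0:-i:1]$, the first-order expansions of $\gamma_s(p,q)$ in $s$ and of $\gamma_{-t}(z, v(p))$ in $t$, combined with the secant formula \eqref{eq:approximate_normal}, yield
\begin{equation*}
  \gamma_0'(p, p_{s_j}) \longrightarrow u := \frac{\gamma_0'(p, q) - \rho\, \gamma_0'(p, v(p))}{\bigl|\gamma_0'(p, q) - \rho\, \gamma_0'(p, v(p))\bigr|_\omega},
\end{equation*}
so $u \in \Tan(\partial\Omega, p)$ by the definition of the tangent cone. Setting $\tau = \bigl|\gamma_0'(p,q) - \rho\, \gamma_0'(p, v(p))\bigr|_\omega^{-1}$ and $\nu = -\rho\,\tau$ writes $u = \nu \gamma_0'(p, v(p)) + \tau \gamma_0'(p, q)$ with $A_0|\nu| = A_0|\rho|\tau \leq \tau$, and the triangle inequality $1 = |u|_\omega \leq \tau + |\nu| \leq \tau(1 + A_0^{-1})$ gives the ``in particular'' bound $\tau \geq A_0/(1+A_0)$.

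The main obstacle is the case $z_s \notin \bar\Omega$, for which Lemma \ref{lem:projection_to_boundary} is not directly applicable. Existence and uniqueness of $\tilde\pi_p$ hold because $v(p) \in \Omega$ forces $\gamma_t(z_s, v(p))$ to enter $\bar\Omega$ at some smallest $t > 0$, and the strict monotonicity \eqref{eq:delta_geodesic_estimate_inward} then rules out re-exit. For the Lipschitz bound, the contradiction argument in the proof of Lemma \ref{lem:transverse_vector_field} never genuinely uses that $z \in \Omega$ (only $q \in \partial\Omega$), so the transversality inequality \eqref{eq:vector_inner_product} extends to a full two-sided neighborhood of $p$, and a verbatim adaptation of the proof of Lemma \ref{lem:projection_to_boundary} then delivers the same constant $A^{-1}$ for $\tilde\pi_p$. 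Once this cosmetic extension is in place, the rest of the argument is the limit calculation above.
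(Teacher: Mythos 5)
Your core construction---pushing along the geodesic toward $q$ and projecting back onto $\partial\Omega$ along $v(p)$ to obtain boundary points $p_{s_j}\to p$ with controlled secant directions---is the same idea as the paper's proof, and your triangle-inequality finish ($1=\abs{u}_\omega\leq\abs{\nu}+\tau$, hence $\tau\geq A_0/(1+A_0)$) is cleaner than the paper's quadratic-formula computation.  But the handling of $z_s\notin\overline\Omega$ contains two genuine gaps, and they are not cosmetic.

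First, ``$v(p)\in\Omega$'' is nowhere established in the paper and need not hold.  Lemma \ref{lem:transverse_vector_field} only gives $\dist(p,v(p))=\frac{\pi}{4}$, a macroscopic distance, and that $\gamma_0'(p,v(p))$ is uniformly transverse near $p$; the geodesic toward $v(p)$ can exit $\Omega$ long before arriving at $v(p)$.  So the existence of your $\tilde\pi_p(z_s)$ cannot be justified this way.  What you actually need is a \emph{local} crossing estimate for $z$ slightly outside $\Omega$, analogous to \eqref{eq:delta_geodesic_estimate_inward}, and that requires the second point.

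Second, for $z\notin\overline\Omega$ the transversality inequality \eqref{eq:vector_inner_product} does not hold as stated---it holds with the opposite sign.  If $z$ is outside and $q\in\Pi_\Omega(z)$, then $\gamma_0'(z,q)$ points \emph{toward} $\Omega$, hence roughly parallel to $\gamma_0'(z,v(p))$, so the correct statement is $\left<\gamma_0'(z,v(p)),\gamma_0'(z,q)\right>_\omega>A_0$.  Correspondingly, in the contradiction argument of Lemma \ref{lem:transverse_vector_field} the hypothesis flips, and the step that deduces $\im u_0^n\geq 0$ from $B(z_j,\delta(z_j))\subset\Omega$ flips to $\im u_0^n\leq 0$; the proof \emph{does} use $z_j\in\Omega$ at precisely this point, so your claim that it ``never genuinely uses that $z\in\Omega$'' is false.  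The argument goes through after tracking all the sign changes, but it is not a verbatim adaptation and must be written out.

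The paper avoids this entire issue by choosing $z_j=\gamma_{A_0t_j}(\gamma_{t_j}(p,v(p)),q)$, i.e.\ stepping a distance $t_j$ inward along $v(p)$ before stepping $A_0t_j$ toward $q$; the estimate \eqref{eq:delta_geodesic_estimate_inward} then guarantees $\delta(z_j)>0$, so $\pi_p$ from Lemma \ref{lem:projection_to_boundary} applies directly with no two-sided extension.  That tilted path is exactly engineered to skip the work you glossed over.  If you keep $z_s=\gamma_s(p,q)$, you must prove the sign-flipped transversality estimate in full; otherwise, adopt the paper's choice of approximating points.  One further minor point: passing from ``for each $A<A_0$, $\abs{\rho}\leq A^{-1}$'' to ``$\abs{\rho}\leq A_0^{-1}$'' requires a diagonal subsequence (the radius $R_A$ and the extracted subsequence both depend on $A$), which the paper carries out explicitly.
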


\begin{proof}
  Using \eqref{eq:delta_geodesic_estimate_inward}, we have $\delta(\gamma_t(p,v(p)))>tA_0$ for $0\leq t<R_0$, so
  \[
    \delta(\gamma_s(\gamma_t(p,v(p)),q))>tA_0-s
  \]
  for $0\leq s\leq tA_0$.  For $0<A<A_0$, let $\pi_p$ and $R_{A}$ be given by Lemma \ref{lem:projection_to_boundary}.  For $j\in\mathbb{N}$, let $t_j=\frac{R_{A}}{j(1+A_0)}$ and $s_j=A_0 t_j$, then set $z_j=\gamma_{s_j}(\gamma_{t_j}(p,v(p)),q)$.  We have $z_j\in\overline\Omega$ and $\dist(z_j,p)\leq s_j+t_j\leq R_{A}/j$, so we can define $p_j=\gamma_{-\pi_p(z_j)}(z_j,v(p))\in\partial\Omega$.  Since $A^{-1}$ is the Lipschitz constant for $\pi_p$, we must have $\abs{\pi_p(z_j)-t_j}\leq A^{-1} s_j$.

  Corollary \ref{cor:lipschitz_geodesic} implies $\gamma_s(\cdot,q)$ and $\gamma_t(\cdot,v(p))$ are both Lipschitz functions, provided that the distances to $q$ and $v(p)$ are uniformly bounded away from $0$ and $\frac{\pi}{2}$.  With this in mind, we compute
  \[
    \gamma_{t_j}(p,v(p))=\frac{p}{\abs{p}}+t_j\gamma_0'(p,v(p))+O(j^{-2}),
  \]
  so
  \[
    z_j=\frac{p}{\abs{p}}+t_j\gamma_0'(p,v(p))+s_j\gamma_0'(p,q)+O(j^{-2}).
  \]
  Since $\abs{\pi_p(z_j)}\leq O(j^{-1})$ and $\abs{\gamma_0'(p,v(p))-\gamma_0'(z_j,v(p))}\leq O(j^{-1})$, we have
  \begin{multline*}
    p_j=z_j-\pi_p(z_j)\gamma_0'(p,v(p))+O(j^{-2})\\
    =\frac{p}{\abs{p}}+(t_j-\pi_p(z_j))\gamma_0'(p,v(p))+s_j\gamma_0'(p,q)+O(j^{-2}),
  \end{multline*}
  so $\dist(p,p_j)\leq O(j^{-1})$ and
  \[
    \gamma_0'(p,p_j)=\frac{(t_j-\pi_p(z_j))\gamma_0'(p,v(p))+s_j\gamma_0'(p,q)}{\dist(p,p_j)}+O(j^{-1}).
  \]
  Since this vector has length one, $\set{\frac{t_j-\pi_p(z_j)}{\dist(p,p_j)}}$ and $\set{\frac{s_j}{\dist(p,p_j)}}$ must both be bounded.  Hence, we can restrict to a subsequence on which $\frac{t_j-\pi_p(z_j)}{\dist(p,p_j)}\rightarrow\nu$ and $\frac{s_j}{\dist(p,p_j)}\rightarrow\tau$ for some constants $\nu$ and $\tau$ satisfying $\abs{\nu}\leq A^{-1}\tau$.  Repeating the argument for a sequence $A_k$ increasing towards $A_0$ and using a diagonalization argument to extract a new subsequence, we conclude that we may assume $\abs{\nu}\leq A_0^{-1}\tau$.

  Furthermore, since $\gamma_0'(p,p_j)$ has unit length, we have
  \[
    \nu^2+2\nu\tau\left<\gamma_0'(p,v(p)),\gamma_0'(p,q)\right>_\omega+\tau^2=1.
  \]
  Solving for $\nu$, we find
  \[
    \nu=-\tau\left<\gamma_0'(p,v(p)),\gamma_0'(p,q)\right>_\omega\pm\sqrt{1-\tau^2(1-\left<\gamma_0'(p,v(p)),\gamma_0'(p,q)\right>_\omega^2)}.
  \]
  Since $\sqrt{1-\tau^2(1-\left<\gamma_0'(p,v(p)),\gamma_0'(p,q)\right>_\omega^2)}\geq\tau\abs{\left<\gamma_0'(p,v(p)),\gamma_0'(p,q)\right>_\omega}$, we have
  \[
    \abs{\nu}\geq\sqrt{1-\tau^2(1-\left<\gamma_0'(p,v(p)),\gamma_0'(p,q)\right>_\omega^2)}-\tau\abs{\left<\gamma_0'(p,v(p)),\gamma_0'(p,q)\right>_\omega}.
  \]
  Since the lower bound is decreasing with respect to $\abs{\left<\gamma_0'(p,v(p)),\gamma_0'(p,q)\right>_\omega}$, we have $\abs{\nu}\geq 1-\tau$.  Combined with $\abs{\nu}\leq A_0^{-1}\tau$, we have $\tau\geq\frac{A_0}{1+A_0}$.  Let $u=\lim\gamma_0'(p,p_j)$.

\end{proof}

\section{Proof of Main Theorem}
\label{sec:proof_main_theorem}

Now, we combine the results of the previous sections in the form that will be the most helpful in the proof of our main theorem:

\begin{lem}
\label{lem:tools}
  Let $\Omega\subset\mathbb{CP}^n$ be a Lipschitz pseudoconvex domain.  There exist constants $A,B>0$ and $S,T>0$ such that for every $p\in\partial\Omega$ there exists a closed connected set $\Gamma_p$ containing $p$, a real-valued function $\mu_p(z)$ on $\Gamma_p$, and a holomorphic isometry $\phi_t^p(z)$ on $\mathbb{CP}^n$ with the following properties:
  \begin{enumerate}
    \item \label{item:t_kahler_generator} $\omega=i\ddbar\mu_p$ on $\Gamma_p$.

    \item \label{item:t_continuity} $\mu_p(z)$ depends continuously on $p$.

    \item \label{item:t_gamma_characterization} $0<\delta(z)<T$ and $0\leq\mu_p(z)\leq\log\sec S$ in the interior of $\Gamma_p$, and either $\delta(z)=0$, $\delta(z)=T$, or $\mu_p(z)=\log\sec S$ on the boundary of $\Gamma_p$.

    \item \label{item:t_surjection} For $z\in\Omega$ satisfying $\delta(z)\leq T$, there exists $p\in\partial\Omega$ satisfying $z\in\Gamma_p$.
    
    \item \label{item:t_phi_domain} For $z\in\Gamma_p$ and $0\leq t\leq T-\delta(z)$, $\phi_t^p(z)\in\Gamma_p$.

    \item \label{item:t_delta_derivative_estimate} For all $z\in\Gamma_p$ and $0<t<T-\delta(z)$,
    \begin{equation}
    \label{eq:delta_derivative_estimate}
      A\leq D^+_t\delta(\phi_{t}^p(z))\leq D^-_t\delta(\phi_t^p(z))\leq 1.
    \end{equation}

    \item \label{item:t_delta_estimate} For all $z\in\Gamma_p$ and $0\leq t\leq T-\delta(z)$,
    \begin{equation}
    \label{eq:delta_estimate}
      \delta(z)+At\leq\delta(\phi_t^p(z))\leq\delta(z)+t.
    \end{equation}

    \item \label{item:t_level_curves} $\mu_p(\phi_t^p(z))=\mu_p(z)$ for all $z\in\Gamma_p$ and $0\leq t\leq T-\delta(z)$.

    \item \label{item:t_Lipschitz_in_p} If $p,q\in\partial\Omega$ and $z\in\Gamma_p\cap\Gamma_q$, then
    \begin{equation}
    \label{eq:Lipschitz_in_p}
      \dist(\phi_t^p(z),\phi_t^q(z))\leq Bt\dist(p,q).
    \end{equation}

    \item \label{item:t_boundary_sequence} If $p\in\partial\Omega$ and $z\in\Gamma_p$ satisfy $\mu_p(z)=\log\sec S$, then there exists $u\in\Tan(\partial\Omega,p)$ such that $D_{p,u}\mu_p(z)\leq -\frac{SA^3}{\sqrt{2}(1+A)^3}$.

    \item \label{item:t_Lipschitz_in_z} $\mu_p(z)$ has Lipschitz constant $1$ in $z$.
  \end{enumerate}
\end{lem}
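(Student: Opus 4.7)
The plan is to set $\mu_p(z) := \mu_{p, v(p)}(z)$ and $\phi_t^p(z) := \phi_t^{p, v(p)}(z)$ using the transverse map $v$ of Lemma \ref{lem:transverse_vector_field} together with the constructions of Lemma \ref{lem:weight_and_map}, and to take $\Gamma_p$ to be the connected component containing $p$ of the set $\set{z \in \overline\Omega : \delta(z) \leq T,\ \mu_p(z) \leq \log\sec S}$. The constants $A, B, S, T$ are chosen small: $S < \pi/4$ forces $\Gamma_p \subset \dom\mu_{p, v(p)}$ via items (1) and (4) of Lemma \ref{lem:weight_and_map}, and $T$ taken of the same order as $S$ lets Lemma \ref{lem:neighborhood_base} shrink $\Gamma_p$ into a ball $B(p, R)$ with $R$ proportional to $S$.

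Several items come essentially for free. Items (1) and (8) of the lemma are items (2) and (11) of Lemma \ref{lem:weight_and_map}; item (2) follows from the Lipschitz regularity of $v$ and the joint smoothness of $\mu_{p, q}$; item (3) is built into the definition of $\Gamma_p$ (with the lower bound $\mu_p \geq 0$ coming from item (4) of Lemma \ref{lem:weight_and_map}); item (4) follows from Lemma \ref{lem:surjection}, since $[z] = [\gamma_t(p, v(p))]$ forces $\mu_p(z) = 0$ by item (3) of Lemma \ref{lem:weight_and_map}; and item (5) is a connectedness consequence of (7), (8), and continuity of the flow. Item (11) follows from the explicit formula $\nabla\mu_p = -\tfrac{1}{2}\tan(\dist(z, \alpha))\gamma_0'(z, \alpha) - \tfrac{1}{2}\tan(\dist(z, \beta))\gamma_0'(z, \beta)$, which vanishes on the geodesic $\gamma(p, v(p))$ (where both distances equal $\pi/4$ and $\gamma_0'(z, \alpha) = -\gamma_0'(z, \beta)$) and so stays bounded by $1$ on the shrunken $\Gamma_p$ by continuity. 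Item (9) extends item (12) of Lemma \ref{lem:weight_and_map} from $z = p$ to general $z \in \Gamma_p \cap \Gamma_q$ by exploiting that $\phi_t^p$ is an isometry whose velocity depends Lipschitz-continuously on $p$ through $v(p)$.

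For items (6) and (7), the key observation is that the velocity $X_p(z) := \left.\frac{d}{dt}\phi_t^p(z)\right|_{t=0}$ coincides with $\gamma_0'(z, v(p))$ whenever $z$ lies on the geodesic $\gamma(p, v(p))$ (since $\phi_t^p$ restricts there to the geodesic flow by item (10) of Lemma \ref{lem:weight_and_map}), and the explicit formula for $\phi_t^{p, q}$ gives $\abs{X_p(z) - \gamma_0'(z, v(p))}_\omega = O(\dist(z, p))$. Combined with the transversality estimate \eqref{eq:vector_inner_product} and the directional derivative identity \eqref{eq:directional_derivative} applied at each point $\phi_s^p(z) \in \Gamma_p$, this yields $D_t^+\delta(\phi_t^p(z)) \geq A$ for any prescribed $A < A_0$ once $R$ is small enough that the perturbation stays below $A_0 - A$; this is item (6), and integrating in $t$ gives item (7).

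The subtlest item is (10). For $u = \nu\gamma_0'(p, v(p)) + \tau\gamma_0'(p, q')$ produced by Lemma \ref{lem:boundary_sequence} with an auxiliary boundary point $q'$, item (7) of Lemma \ref{lem:weight_and_map} gives, after observing that the $\nu$-coefficient cancels algebraically in the bracket and the $u_q = (D_p v)(u)$ contribution sits inside the $O(\dist(z, p)^2)$ remainder,
\[
  D_{p, u}\mu_p(z) = -\dist(z, p)\,\tau\,\re\bigl[\hat e_2 \cdot \overline{\hat e_3} - 2i(e_2 \cdot \bar e_1)\im(e_3 \cdot \bar e_1)\bigr] + O(\dist(z, p)^2),
\]
where $e_1 = \gamma_0'(p, v(p))$, $e_2 = \gamma_0'(p, z)$, $e_3 = \gamma_0'(p, q')$, and hats denote projection orthogonal to $e_1$. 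Choosing $q'$ so that $e_3$ aligns with the maximizer of this bracket (achievable since every geodesic issued from $p$ eventually meets $\partial\Omega$) makes the bracket dominate $\abs{\hat e_2}^2 + 2\im(e_2 \cdot \bar e_1)^2$, which is exactly the quadratic form appearing in the expansion $\mu_p(z) = \tfrac{1}{2}\dist(z, p)^2(\abs{\hat e_2}^2 + 2\im(e_2 \cdot \bar e_1)^2) + O(\dist(z, p)^3)$ from item (6) of Lemma \ref{lem:weight_and_map}. Combining this with $\tau \geq A/(1 + A)$ from Lemma \ref{lem:boundary_sequence} and the lower bound $\dist(z, p) \gtrsim S$ forced by $\mu_p(z) = \log\sec S$ yields the claimed $-SA^3/(\sqrt 2(1+A)^3)$ after constant tracking. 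The main obstacle throughout is coordinating the choices of $S, T, A, B$ so that all these perturbation and scaling estimates close up simultaneously.
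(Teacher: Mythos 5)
Your overall architecture matches the paper's: $\mu_p := \mu_{p,v(p)}$, $\phi_t^p := \phi_t^{p,v(p)}$ (the paper actually inserts a $(1-\eps)$ reparametrization, but this is a technicality), and $\Gamma_p$ the component through $p$ of $\{z\in\overline\Omega:\mu_p(z)\leq\log\sec S,\ \delta(z)\leq T\}$.  Items (1)--(9) and (11) follow in essentially the same way the paper handles them, and your explicit formula for $D_z\mu_p$ in item (11) is correct and a pleasant alternative to the paper's ``local minimum on the geodesic'' argument.

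Item (10), however, has a genuine gap.  You want to apply Lemma \ref{lem:boundary_sequence} with an auxiliary boundary point $q'$ chosen so that $e_3 = \gamma_0'(p,q')$ aligns with $e_2 = \gamma_0'(p,z)$, and you justify the existence of such a $q'$ by saying ``every geodesic issued from $p$ eventually meets $\partial\Omega$.''  That does not deliver what you need: the geodesic from $p$ through $z$ heads into the interior of $\Omega$ and need not exit within distance $\frac{\pi}{2}$ (it may remain in $\Omega$ or wander arbitrarily far before exiting), and even if it does exit at some $q'$, the distance $\dist(p,q')$ is out of your control and may be incompatible with the scale $S$ on which your other estimates live.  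More fundamentally, for $q'\in\partial\Omega$ near $p$ the direction $\gamma_0'(p,q')$ is forced to be approximately tangent to $\partial\Omega$, so the set of achievable $e_3$ is constrained, and you have not shown the supremum of your bracket over that constrained set is large enough.  The paper sidesteps this by choosing $q$ via Lemma \ref{lem:surjection}: write $[z]=[\gamma_t(q,v(q))]$ with $q\in\partial\Omega$ and $t<A_0^{-1}\delta(z)$.  Then $q$ automatically lies on $\partial\Omega$ at distance $\lesssim\dist(p,z)+T$ from $p$ and distance $\lesssim T$ from $z$; running a contradiction argument in which $R_j\sim S_j\sim 1/j$ and $T_j\sim 1/j^3$ (so $T_j$ decays faster than $S_j^2$) forces $\gamma_0'(p_j,q_j)\to\gamma_0'(p_j,z_j)$ rapidly enough to swamp the error terms, and the bracket then converges to the quadratic form $1-\re\big((\gamma_0'(p,z)\cdot\overline{\gamma_0'(p,v(p))})^2\big)$ in \eqref{eq:w_second_derivative}, which is bounded below precisely because $\mu_p(z)=\log\sec S$.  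Without this specific source for $q$ and the scaling relation between $T$ and $S$, your derivation does not close.

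Two smaller points: Lemma \ref{lem:boundary_sequence} gives $\tau\geq\frac{A_0}{1+A_0}$ (not $\frac{A}{1+A}$ — weaker, so not fatal, but worth getting right, and the paper does use the $A_0$ bound in the final inequality chain).  And the phrase ``after constant tracking'' conceals exactly the place where the paper resorts to a sequence argument rather than a direct one; making the tracking concrete would force you into essentially the paper's contradiction framework with the $1/j$, $1/j^3$ scales.
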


\begin{proof}
  Let $v$, $A_0$, and $R_0$ be given by Lemma \ref{lem:transverse_vector_field}.  Let $\mu_{p,v(p)}$ and $\phi_t^{p,v(p)}$ be given by Lemma \ref{lem:weight_and_map}.

  Let $\mu_p=\mu_{p,v(p)}$.  We obtain \eqref{item:t_kahler_generator} from Lemma \ref{lem:weight_and_map} \eqref{item:w_kahler_generator} and \eqref{item:t_continuity} since $v$ is Lipschitz and Lemma \ref{lem:weight_and_map} \eqref{item:w_first_derivative} implies that $\mu$ is $C^1$ in $p$ and $q$.

  Pick any $0<A<A_0$.  From Lemma \ref{lem:weight_and_map} \eqref{item:m_special_cases}, $(\phi_t^{p,v(p)})'(z)$ can be made arbitrarily close to $\gamma_t'(z,v(p))$ by requiring $z$ to be close to $p$.  Combining this with \eqref{eq:directional_derivative} and \eqref{eq:vector_inner_product}, we can choose $\eps>0$ sufficiently small so that there exists $R_1>0$ with the property that if $\phi_{(1-\eps)t}^{p,v(p)}(z)\in B(p,R_1)$, we have
  \begin{equation}
  \label{eq:preliminary_delta_derivative_estimate}
    A\leq D^+_t\delta\left(\phi_{(1-\eps)t}^{p,v(p)}(z)\right)
    \leq D^-_t\delta\left(\phi_{(1-\eps)t}^{p,v(p)}(z)\right)\leq 1.
  \end{equation}
  Using Lemma \ref{lem:neighborhood_base}, we may choose $0<S_1<\frac{AR_1}{1+A}$ and $0<T_1\leq AR_1-(1+A)S_1$ so that \eqref{eq:preliminary_delta_derivative_estimate} holds on $\Gamma(p,S_1,T_1)$.  Assuming $R_1<\frac{\pi}{4}$, then Lemma \ref{lem:weight_and_map} \eqref{item:w_domain} implies $\Gamma(p,S_1,T_1)\subset\dom\mu_{p,v(p)}$.  Set $\phi_t^p(z)=\phi_{(1-\eps)t}^{p,v(p)}(z)$.  For $0<S\leq S_1$ and $0<T\leq T_1$ to be chosen later, let $\Gamma_p$ be the connected component of
  \[
    \set{z\in\overline\Omega:\mu_p(z)\leq \log\sec S\text{ and }\delta(z)\leq T}
  \]
  containing $p$.  Using Lemma \ref{lem:weight_and_map} \eqref{item:w_lower_bound}, this is a subset of $\Gamma(p,S,T)$, and we have \eqref{item:t_gamma_characterization} by definition.  Lemma \ref{lem:surjection} implies \eqref{item:t_surjection} since there exists $p\in\partial\Omega$ such that $z=\gamma_t(p,v(p))$ and hence $\mu_p(z)=0$.

  Integrating \eqref{eq:preliminary_delta_derivative_estimate} gives us \eqref{eq:delta_estimate}, provided that $z$ and $\phi_t^p(z)$ are both in $\Gamma_p$. However, if $\delta(\phi_t^p(z))=T$, then \eqref{eq:delta_estimate} will imply that $\delta(z)+t\geq T$.  Hence, if $z\in\Gamma_p$ and $0\leq t\leq T-\delta(z)$, we can conclude that $\phi_t^p(z)\in\Gamma_p$ as well, implying \eqref{item:t_phi_domain}.  With this technicality taken care of, we immediately obtain \eqref{eq:delta_derivative_estimate} from \eqref{eq:preliminary_delta_derivative_estimate}, and \eqref{eq:delta_estimate} will follow without qualification.  Furthermore, we can now obtain \eqref{item:t_level_curves} from Lemma \ref{lem:weight_and_map} \eqref{item:m_level_curves}.  Since $v$ is known to be Lipschitz, we obtain \eqref{eq:Lipschitz_in_p} from Lemma \ref{lem:weight_and_map} \eqref{item:m_Lipschitz_in_p}.

  To prove \eqref{item:t_boundary_sequence}, we suppose that for every $j\in\mathbb{N}$ if we set $R_j=\frac{R_1}{j}$, $T_j=\frac{A R_j}{j^2}$, and $S_j=\frac{A R_j-T_j}{1+A}$, then there exists $p_j\in\partial\Omega$ and $z_j\in\Gamma_{p_j}$ such that $\mu_{p_j}(z_j)=\log\sec S_j$ and for every $u\in\Tan(\partial\Omega,p_j)$ we have $D_{p_j,u}\mu_{p_j}(z_j)>-\frac{S_j A^3}{\sqrt{2}(1+A)^3}$.  By Lemma \ref{lem:neighborhood_base}, $\Gamma_{p_j}\subset B(p_j,R_j)$.

  For $j$ sufficiently large, Lemma \ref{lem:surjection} implies that there exists $q_j\in\partial\Omega$ and $0\leq t_j<A_0^{-1}\delta(z_j)$ such that $[z_j]=[\gamma_{t_j}(q_j,v(q_j))]$.  By Lemma \ref{lem:weight_and_map} \eqref{item:w_special_case}, $\mu_{q_j}(q_j)=0$.  We have $\dist(p_j,q_j)\leq R_j+t_j\leq R_j+O(T_j)$.  From Lemma \ref{lem:weight_and_map} \eqref{item:m_level_curves} and Lemma \ref{lem:weight_and_map} \eqref{item:m_special_cases}, $\frac{d}{dt}\mu_{p_j}(\gamma_t(p_j,v(p_j)))=0$, so given any $M>0$ there exists $R_M>0$ so that $\mu_{p_j}(\gamma_t(z,v(z)))$ has Lipschitz constant $M>0$ in $t$ whenever $z\in B(p_j,R_M)$.  Note that Lemma \ref{lem:weight_and_map} \eqref{item:w_invariance} guarantees that $R_M$ can be chosen independently of $j$.  Hence, for $j$ sufficiently large we will have
  \[
    \abs{\mu_{p_j}(z_j)-\mu_{p_j}(q_j)}\leq Mt_j<MA_0^{-1}T_j.
  \]
  This implies that $\mu_{p_j}(q_j)\geq\log\sec S_j-O(T_j)$.  Since $\abs{\log\sec S-\frac{1}{2}S^2}\leq O(S^3)$,
  \begin{equation}
  \label{eq:mu_p_lower_bound}
    \liminf_{j\rightarrow\infty}\frac{\mu_{p_j}(q_j)}{(\dist(p_j,q_j))^2}\geq\liminf_{j\rightarrow\infty}\frac{\log\sec S_j-O(T_j)}{R_j^2+O(T_jR_j)}=\frac{A^2}{2(1+A)^2}
  \end{equation}
  On the other hand, \eqref{eq:w_second_derivative} implies
  \begin{equation}
  \label{eq:mu_p_limit}
    \lim_{j\rightarrow\infty}\abs{\frac{\mu_{p_j}(q_j)}{(\dist(p_j,q_j))^2}-\frac{1}{2}\left(1-\re\left(\left(\gamma_0'(p_j,q_j)\cdot\overline{\gamma_0'(p_j,v(p_j))}\right)^2\right)\right)}=0.
  \end{equation}
  Once again, we use Lemma \ref{lem:weight_and_map} \eqref{item:w_invariance} to guarantee that the convergence is independent of $p_j$.  Since the second term in \eqref{eq:mu_p_limit} is bounded by $1$, we have
  \begin{equation}
  \label{eq:mu_p_bounds}
    \limsup_{j\rightarrow\infty}\frac{S_j^2}{2(\dist(p_j,q_j))^2}\leq\limsup_{j\rightarrow\infty}\frac{\mu_{p_j}(q_j)}{(\dist(p_j,q_j))^2}\leq 1.
  \end{equation}
  We may also use \eqref{eq:mu_p_limit} to show
  \[
    \liminf_{j\rightarrow\infty}\frac{\mu_{p_j}(q_j)}{(\dist(p_j,q_j))^2}\leq\liminf_{j\rightarrow\infty}\frac{1}{2}\left(1-\re\left(\left(\gamma_0'(p_j,q_j)\cdot\overline{\gamma_0'(p_j,v(p_j))}\right)^2\right)\right),
  \]
  so \eqref{eq:mu_p_lower_bound} implies 
  \begin{equation}
  \label{eq:liminf_lower_bound}
    \frac{A^2}{(1+A)^2}\leq\liminf_{j\rightarrow\infty}\left(1-\re\left(\left(\gamma_0'(p_j,q_j)\cdot\overline{\gamma_0'(p_j,v(p_j))}\right)^2\right)\right).
  \end{equation}

  Let $u_j\in\Tan(\partial\Omega,p_j)$ satisfying $u_j=\nu_j\gamma_0'(p_j,v(p_j))+\tau_j\gamma_0'(p_j,q_j)$ be given by Lemma \ref{lem:boundary_sequence}, where $\tau_j\geq\frac{A_0}{1+A_0}$.  Since $v$ is $C^1$, $D_{p_j,u_j}v(p_j)$ is bounded, so we can apply \eqref{eq:w_first_derivative} with $u^{p_j}=u_j$ and $u^{v(p_j)}=D_{p_j,u_j}v(p_j)$.  Since $\nu_j$ is real, terms involving $\nu_j\gamma_0'(p_j,v(p_j))$ will cancel in \eqref{eq:w_first_derivative}, giving us
  \begin{multline*}
    D_{p_j,u_j}\mu_{p_j}(z_j)=-\dist(z_j,p_j)\tau_j\\
    \times\re\left(\gamma_0'(p_j,z_j)\cdot\overline{\gamma_0'(p_j,q_j)}-\gamma_0'(p_j,z_j)\cdot\overline{\gamma_0'(p_j,v(p_j))}\gamma_0'(p_j,q_j)\cdot\overline{\gamma_0'(p_j,v(p_j))}\right)\\
    +O((\dist(z,p_j))^2),
  \end{multline*}
  with Lemma \ref{lem:weight_and_map} \eqref{item:w_invariance} guaranteeing that the constant in the error term is independent of $j$.  Since \eqref{eq:mu_p_bounds} implies $\dist(p_j,q_j)\geq O(R_j)$, \eqref{eq:lipschitz_geodesic_gradient} implies that $\gamma_0'(p_j,z_j)$ has a Lipschitz constant in $z_j$ on the order of $R_j^{-1}$.  Since $\dist(z_j,q_j)\leq O(R_j/j^2)$ we have
  \[
    \lim_{j\rightarrow\infty}\abs{\gamma_0'(p_j,z_j)-\gamma_0'(p_j,q_j)}=0.
  \]
  Hence,
  \[
    \limsup_{j\rightarrow\infty}\frac{D_{p_j,u_j}\mu_{p_j}(z_j)}{\dist(z_j,p_j)}\leq-\frac{A_0}{1+A_0}\liminf_{j\rightarrow\infty}\left(1-\re\left(\gamma_0'(p_j,q_j)\cdot\overline{\gamma_0'(p_j,v(p_j))}\right)^2\right).
  \]
  Combining this with our assumed lower bound for $D_{p_j,u_j}\mu_{p_j}(z_j)$, \eqref{eq:mu_p_bounds}, and \eqref{eq:liminf_lower_bound} gives us
  \begin{multline*}
    -\frac{A^3}{(1+A)^3}\leq\limsup_{j\rightarrow\infty}-\frac{S_j A^3}{\sqrt{2}(1+A)^3\dist(z_j,p_j)}\\
    \leq\limsup_{j\rightarrow\infty}\frac{D_{p_j,u_j}\mu_{p_j}(z_j)}{\dist(z_j,p_j)}\leq-\frac{A_0 A^2}{(1+A_0)(1+A)^2},
  \end{multline*}
  a contradiction.  Therefore, \eqref{item:t_boundary_sequence} follows.

  Regarding Lipschitz constants in $z$, observe that Lemma \ref{lem:weight_and_map} \eqref{item:w_lower_bound} and Lemma \ref{lem:weight_and_map} \eqref{item:w_special_case} together imply that $\mu_p(z)$ has a local minimum when $z$ lies on the geodesic $\gamma(p,q)$, and hence the derivative of $\mu_p$ with respect to $z$ is zero on this set.  By choosing $S$ sufficiently small, the derivative of $\mu_p(z)$ with respect to $z$ can be bounded by $1$, and hence the Lipschitz constant in $z$ will have this same bound.

\end{proof}

We will need to make use of the following theorem of Takeuchi \cite{Tak64} (see also \cite{CaSh05} for a simplified proof in the $C^2$ case):
\begin{thm}[Takeuchi]
  If $\Omega\subset\mathbb{CP}^n$ is a pseudoconvex domain, then there exists $E>0$ such that $i\ddbar(-\log\delta)\geq E\omega$ on $\Omega$ in the sense of currents.
\end{thm}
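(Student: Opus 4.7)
The plan is to combine Oka's Lemma, which gives plurisubharmonicity of $-\log\delta$ on any pseudoconvex $\Omega$, with a uniform positive contribution extracted from the strictly positive holomorphic bisectional curvature of the Fubini-Study metric. This is the classical strategy of Takeuchi, refined in the curvature-theoretic treatments of Elencwajg, Suzuki, and Greene-Wu: pseudoconvexity alone yields $i\ddbar(-\log\delta)\geq 0$, and the ambient positive curvature of $\mathbb{CP}^n$ upgrades this to a uniform strict positivity with constant depending only on the Fubini-Study metric, not on $\Omega$.

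I would first prove the estimate for $\Omega$ with $C^2$ boundary (as in \cite{CaSh05}) and then extend to general pseudoconvex $\Omega$ by inner approximation together with the weak continuity of $i\ddbar$ under monotone limits of plurisubharmonic functions. In the $C^2$ case, $\delta$ is itself $C^2$ in a one-sided tubular collar of $\partial\Omega$; at any $z$ in this collar the nearest boundary point $p\in\Pi_\Omega(z)$ is unique, and I would decompose
\[
  i\ddbar(-\log\delta)=\frac{i\partial\delta\wedge\bar\partial\delta}{\delta^2}-\frac{i\ddbar\delta}{\delta}.
\]
For a $(1,0)$-vector $X$ parallel to the complex gradient of $\delta$, the first term on the right is of order $\delta^{-2}$ and dominates trivially; for $X$ complex-orthogonal to that gradient I would compute $\mathrm{Hess}(\delta)(X,\bar X)$ via a Jacobi-field variation along the minimizing geodesic $\gamma$ from $p$ to $z$. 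The second variation of arc length produces a boundary term proportional to the Levi form of $\partial\Omega$ at $p$ (nonnegative by pseudoconvexity, and entering with the sign that helps us) minus an integral along $\gamma$ of the holomorphic bisectional curvature $R(J,\gamma',\bar J,\gamma')$ of the Jacobi field $J$ extending $X$. The bisectional curvature of $\mathbb{CP}^n$ satisfies $R(J,\gamma',\bar J,\gamma')\geq K\abs{J}_\omega^2$ uniformly, for a constant $K>0$ depending only on the Fubini-Study metric; integrating along $\gamma$ and dividing by $\delta$ then yields the required lower bound $E\omega$ in the tubular collar. The complement $\{\delta\geq\delta_0\}$ for $\delta_0>0$ small is a compact subset of $\Omega$ on which $-\log\delta$ is bounded, and the estimate there is obtained by rerunning the same Jacobi-field computation at any point where $\delta$ is differentiable (a full-measure set, by Federer's theory applied to $\Pi_\Omega$) and then invoking lower semicontinuity in the current sense.

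The hard part will be uniformity: the Jacobi-field estimate must be controlled both as $\delta\to 0$ (where the $1/\delta$ factor amplifies any error) and as $\delta\to\pi/2$ (where the geodesic exponential map approaches the cut locus of $\mathbb{CP}^n$); the cross terms arising from the decomposition of $X$ into components parallel and complex-orthogonal to the gradient of $\delta$ must be handled so that the $i\partial\delta\wedge\bar\partial\delta/\delta^2$ term absorbs residual negative contributions; and the passage from $C^2$ to general pseudoconvex $\Omega$ requires an inner approximation by smoothly bounded pseudoconvex subdomains, which in $\mathbb{CP}^n$ is available via sublevel sets of a smoothed Lipschitz defining function (when such a function exists) or, more generally, via a Richberg-type regularization of any plurisubharmonic exhaustion of $\Omega$. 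Once the pointwise estimate is established for the approximating smooth domains with $E$ independent of the approximation, the current inequality on $\Omega$ follows by passing to the weak limit.
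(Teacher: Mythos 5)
The paper does not prove this statement: Takeuchi's theorem is quoted as an external input, citing \cite{Tak64} (with \cite{CaSh05} noted for a simplified proof in the $C^2$ case), and the introduction explicitly says the geometric section is self-contained ``except for the proof of Takeuchi's Theorem.'' So there is no in-paper argument for you to be compared against; you have supplied one where the paper chose to cite.

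Your outline is the standard curvature/Jacobi-field route in the spirit of Takeuchi, Elencwajg, Suzuki, Greene--Wu, and Cao--Shaw, and the main mechanism you describe --- decompose $i\ddbar(-\log\delta)$, let $i\partial\delta\wedge\bar\partial\delta/\delta^2$ handle the normal direction, and get the tangential positivity from the second variation of arc length along the minimizing geodesic, with the Levi form contributing a favorable boundary term and the holomorphic bisectional curvature contributing a uniform interior term --- is correct. Two remarks on the execution. First, the framing ``Oka's Lemma already gives $i\ddbar(-\log\delta)\geq 0$, and curvature upgrades it'' is a bit misleading outside $\mathbb{C}^n$: in a curved ambient manifold even the non-strict plurisubharmonicity of $-\log\delta$ for a pseudoconvex domain is a curvature-sign statement, not a free consequence of pseudoconvexity; in $\mathbb{CP}^n$ one should simply run the Jacobi-field estimate once and obtain the strict bound directly. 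Second, and more substantively, the reduction from general pseudoconvex $\Omega$ to $C^2$ domains by ``Richberg-type regularization of any plurisubharmonic exhaustion of $\Omega$'' risks circularity: the natural candidate exhaustion to regularize is $-\log\delta$ itself, whose plurisubharmonicity is part of what you are proving. You should instead exhaust $\Omega$ by $C^\infty$ strictly pseudoconvex subdomains using the local-Steinness characterization of pseudoconvexity in $\mathbb{CP}^n$, which is independent of Takeuchi's theorem, prove the uniform estimate on those subdomains, and then pass to the limit in the sense of currents; the universality of $E$ (depending only on the Fubini--Study curvature, not on $\Omega$, as noted in the paper's introduction via \cite{Ele75}, \cite{Suz76}, \cite{GrWu78}) is exactly what makes this limit work. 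Finally, the cut-locus worry near $\delta=\pi/2$ is moot: for any proper pseudoconvex $\Omega\subset\mathbb{CP}^n$ one has $\sup_\Omega\delta<\pi/2$, as the paper observes in Section \ref{sec:notation}.
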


Let $\Gamma_p$, $\mu_p$, and $\phi_t^p$ be given by Lemma \ref{lem:tools}.  By Takeuchi's Theorem and Lemma \ref{lem:tools} \eqref{item:t_kahler_generator}, for any $p\in\mathbb{CP}^n$, $-\log\delta(z)-E\mu_p$ is plurisubharmonic when $z\in\Gamma_p$.

Set $\zeta=\frac{SA^3E}{2\sqrt{2}(1+A)^3B}$.  We may assume that $\zeta<1$ (if not, we can increase the size of $B$).  Choose $F>0$ and $0<\eta<\zeta$ so that
\begin{equation}
\label{eq:eta_defining_estimate}
  \eta\zeta^{-1} \left(-\log\left(\eta \zeta^{-1}\right)+1+F+2^{-1}E\log\sec S\right)
  <A.
\end{equation}
This is possible, since the lower bound in \eqref{eq:eta_defining_estimate} approaches $0$ as $\eta\rightarrow 0^+$.

For $p\in\partial\Omega$, $z\in \Gamma_p$, and $0\leq t<T-\delta(z)$, define
\[
  \lambda_t^p(z)=t^\eta\left(-\log(\eta t^{-1}\delta(\phi_t^p(z)))-\eta^{-1}+1+F+2^{-1}E(\log\sec S-\mu_p(z))\right).
\]
Since $\phi$ is holomorphic in $z$, $-\log\delta(\phi_t^p(z))-E\mu_p(\phi_t^p(z))$ will be plurisubharmonic. Using Lemma \ref{lem:tools} \eqref{item:t_kahler_generator} and Lemma \ref{lem:tools} \eqref{item:t_level_curves}, we have
\begin{equation}
\label{eq:plurisubharmonicity_bound}
  i\ddbar\lambda_t^p(z)\geq 2^{-1}t^\eta E\omega
\end{equation}
in the sense of currents.

Before optimizing in $t$, we will need to restrict our domain in $t$.  The following lemma gives us appropriate upper and lower bounds for $t$ based on $z$ and $p$.
\begin{lem}
\label{lem:a_b_defn}
  When $z\in\Omega$ and $p\in\partial\Omega$ satisfy $0<\delta(z)<\left(1-\zeta\right)T$ and $z\in\Gamma_p$, we may define
  \begin{align}
  \label{eq:a_b_defn}
    a_p(z)&=\inf\set{0<t<T-\delta(z):t^{-1}\delta(\phi_t^p(z))\leq\eta^{-1}},\\
    b_p(z)&=\inf\set{0<t<T-\delta(z):t^{-1}\delta(\phi_t^p(z))\leq\zeta^{-1}},
  \end{align}
  continuous in $z$ and $p$ satisfying
  \begin{equation}
  \label{eq:a_b_estimate}
    0<a_p(z)<b_p(z)<T-\delta(z).
  \end{equation}
  When $a_p(z)\leq t\leq b_p(z)$, we have
  \begin{equation}
  \label{eq:t_delta_estimate}
    \zeta^{-1}\leq t^{-1}\delta(\phi_t^p(z))\leq\eta^{-1}
  \end{equation}
\end{lem}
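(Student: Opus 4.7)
The plan is to analyze the continuous function $f_{z,p}(t) := t^{-1}\delta(\phi_t^p(z))$ on $(0, T - \delta(z))$ and identify $a_p(z)$ and $b_p(z)$ as the unique values where $f_{z,p}$ first attains the levels $\eta^{-1}$ and $\zeta^{-1}$, respectively. Continuity in $(z,p)$ will then follow from strict monotonicity of $f_{z,p}$ at these two levels.

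First I would record the elementary two-sided bounds. From \eqref{eq:delta_estimate} I obtain $\delta(z)/t + A \le f_{z,p}(t) \le \delta(z)/t + 1$. The lower bound forces $f_{z,p}(t) \to +\infty$ as $t \to 0^+$, so any $t$ in either defining set satisfies $t \ge \delta(z)/(\eta^{-1} - A) > 0$. For the upper end, set $t_0 = \delta(z)\zeta/(1-\zeta)$; the hypothesis $\delta(z) < (1-\zeta)T$ rearranges to $t_0 < T - \delta(z)$, and the upper bound on $f_{z,p}$ yields $f_{z,p}(t_0) \le \zeta^{-1}$. Hence both defining sets are nonempty and $0 < a_p(z) \le b_p(z) \le t_0 < T - \delta(z)$, and continuity of $f_{z,p}$ forces $f_{z,p}(a_p(z)) = \eta^{-1}$ and $f_{z,p}(b_p(z)) = \zeta^{-1}$.

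The main obstacle—and the only step requiring real care—is the strict monotonicity of $f_{z,p}$ wherever $f_{z,p} > 1$. Here I would exploit the one-parameter group property $\phi_{t_2}^p(z) = \phi_{t_2-t_1}^p(\phi_{t_1}^p(z))$, which is inherited from the explicit formula for $\phi_t^{p,v(p)}$ in the proof of Lemma \ref{lem:weight_and_map} together with the definition $\phi_t^p = \phi_{(1-\eps)t}^{p,v(p)}$. Applying the upper half of \eqref{eq:delta_estimate} at the basepoint $\phi_{t_1}^p(z)$, which lies in $\Gamma_p$ by Lemma \ref{lem:tools} \eqref{item:t_phi_domain}, yields for $0 < t_1 < t_2 < T - \delta(z)$
\[
  f_{z,p}(t_2) - f_{z,p}(t_1) \le \frac{t_2 - t_1}{t_2}\bigl(1 - f_{z,p}(t_1)\bigr).
\]
Whenever $f_{z,p}(t_1) > 1$ the right-hand side is strictly negative, so $f_{z,p}$ is strictly decreasing on the superlevel set $\{f_{z,p} > 1\}$. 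Since $\eta^{-1} > \zeta^{-1} > 1$, this immediately gives $a_p(z) < b_p(z)$ and $\zeta^{-1} \le f_{z,p}(t) \le \eta^{-1}$ throughout $[a_p(z), b_p(z)]$, establishing \eqref{eq:t_delta_estimate} together with \eqref{eq:a_b_estimate}.

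Finally, joint continuity of $(t,z,p)\mapsto f_{z,p}(t)$ for $t>0$ follows from continuity of $v$ (and hence of $\phi_t^p$ in all its arguments) combined with the Lipschitz continuity of $\delta$. The strict monotonicity of $f_{z,p}$ through each of the levels $\eta^{-1}$ and $\zeta^{-1}$ then upgrades this to continuity of $a_p(z)$ and $b_p(z)$ by a routine implicit-function-type argument.
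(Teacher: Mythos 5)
Your proof is correct. The argument parallels the paper's at the structural level (show that $f_{z,p}(t)=t^{-1}\delta(\phi_t^p(z))$ blows up as $t\to 0^+$, drops to at most $\zeta^{-1}$ before $t=T-\delta(z)$, and is monotone in the relevant range), but there is a genuine technical divergence in how you establish the monotonicity. The paper works with Dini derivatives directly: \eqref{eq:delta_derivative_estimate} gives $D^+_t(t^{-1}\delta(\phi_t^p(z)))\leq D^-_t(t^{-1}\delta(\phi_t^p(z)))\leq t^{-1}-t^{-2}\delta(\phi_t^p(z))$, and one concludes that $f_{z,p}$ is non-increasing wherever $f_{z,p}\geq 1$. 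You instead invoke the one-parameter group law $\phi_{t_2}^p(z)=\phi_{t_2-t_1}^p(\phi_{t_1}^p(z))$ (which does indeed follow from the explicit unitary formula in the proof of Lemma \ref{lem:weight_and_map} and the reparameterization $\phi_t^p=\phi_{(1-\eps)t}^{p,v(p)}$) to derive the finite-difference inequality
\[
  f_{z,p}(t_2)-f_{z,p}(t_1)\le\frac{t_2-t_1}{t_2}\bigl(1-f_{z,p}(t_1)\bigr)
\]
directly from the integrated bound \eqref{eq:delta_estimate} applied at the shifted basepoint. This buys you monotonicity without any appeal to the ``nonpositive Dini derivative implies non-increasing'' principle and without needing to first know that the superlevel set $\{f_{z,p}\geq 1\}$ is an interval; in exchange it requires observing that $\phi_{t_1}^p(z)\in\Gamma_p$ (Lemma \ref{lem:tools} \eqref{item:t_phi_domain}) and checking that the parameter $t_2-t_1$ is admissible, i.e.\ $t_2-t_1\leq T-\delta(\phi_{t_1}^p(z))$, which follows from $\delta(\phi_{t_1}^p(z))\leq\delta(z)+t_1$. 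Your continuity argument is also slightly more careful than the paper's one-sentence remark, since you make explicit that transversal crossing of the level sets $\eta^{-1}$ and $\zeta^{-1}$ is what upgrades joint continuity of $f_{z,p}$ to continuity of $a_p$ and $b_p$; the paper relies on the same fact implicitly.
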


\begin{proof}
  From \eqref{eq:delta_derivative_estimate}, $D^+_t(t^{-1}\delta(\phi_t^p(z)))\leq D^-_t(t^{-1}\delta(\phi_t^p(z)))\leq t^{-1}-t^{-2}\delta(\phi_t^p(z))$, so $t^{-1}\delta(\phi_t^p(z))$ is decreasing in $t$ whenever $t^{-1}-t^{-2}\delta(\phi_t^p(z))\leq 0$, or $1\leq t^{-1}\delta(\phi_t^p(z))$.  Suppose $z\in\Gamma_p$ satisfies $0<\delta(z)<\left(1-\zeta\right)T$ and $0<t<T-\delta(z)$.  Since $\delta(z)>0$, $t^{-1}\delta(\phi_t^p(z))>\eta^{-1}$ for all $t$ sufficiently small.  By \eqref{eq:delta_estimate}, $t^{-1}\delta(\phi_t^p(z))\leq t^{-1}\delta(z)+1$.  When $t=T-\delta(z)$, we have $t^{-1}\delta(z)+1=\frac{T}{T-\delta(z)}<\zeta^{-1}$.  Hence, \eqref{eq:a_b_estimate} follows for $a_p$ and $b_p$ defined by \eqref{eq:a_b_defn}.  Since $t^{-1}\delta(\phi_t^p(z))$ is decreasing when it is larger than one, we also have \eqref{eq:t_delta_estimate} when $a_p(z)\leq t\leq b_p(z)$.  Observe that $a_p(z)$ and $b_p(z)$ must be continuous in $z$ since $\phi_t^p(z)$ is continuous in $z$ and continuous in $p$ by Lemma \ref{lem:tools} \eqref{item:t_Lipschitz_in_p}.
\end{proof}

We are now ready to optimize in $t$ and obtain a local construction for our weight function on $\Gamma_p$.  For any $p\in\partial\Omega$, define
\[
  \dom\lambda^p=\set{z\in\Omega:z\text{ is in the interior of }\Gamma_p\text{ and }0<\delta(z)<\left(1-\zeta\right)T}.
\]
For $z\in\dom\lambda^p$, we define
\begin{equation}
\label{eq:lambda_p_defn}
  \lambda^p(z)=\sup_{a_p(z)\leq t\leq b_p(z)}\lambda_t^p(z).
\end{equation}
Our first goal is to show that the supremum is obtained for $a_p(z)<t<b_p(z)$.

\begin{lem}
\label{lem:interior_t}
  For $p\in\partial\Omega$ and $z\in\dom\lambda^p$, let $\lambda^p(z)$ be defined by \eqref{eq:lambda_p_defn}.  Then $\lambda^p(z)>\lambda_{a_p(z)}^p(z)$ and $\lambda^p(z)>\lambda_{b_p(z)}^p(z)$.
\end{lem}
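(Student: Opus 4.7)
The plan is to show that $\lambda_t^p(z)$, viewed as a function of $t$ on $[a_p(z),b_p(z)]$, has a strictly positive one-sided derivative at the left endpoint and a strictly negative one-sided derivative at the right endpoint. Once this is done, both boundary values are strictly less than the supremum, which gives the claim.

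First I would compute $D_t^\pm \lambda_t^p(z)$. Writing $g(t)=-\log(\eta t^{-1}\delta(\phi_t^p(z)))-\eta^{-1}+1+F+2^{-1}E(\log\sec S-\mu_p(z))$, the product rule together with the fact that $\delta(\phi_t^p(z))$ is strictly increasing in $t$ (so $\log$ is smooth on its range) yields
\[
  D_t^\pm \lambda_t^p(z)=t^{\eta-1}\left[\eta\, g(t)+1-\frac{t\, D_t^\pm \delta(\phi_t^p(z))}{\delta(\phi_t^p(z))}\right].
\]
The key inputs are Lemma \ref{lem:tools} \eqref{item:t_delta_derivative_estimate}, which gives $A\leq D_t^+\delta(\phi_t^p(z))\leq D_t^-\delta(\phi_t^p(z))\leq 1$, Lemma \ref{lem:a_b_defn} which controls $t^{-1}\delta(\phi_t^p(z))$ at the two endpoints, and the definition \eqref{eq:eta_defining_estimate} of $\eta$.

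Next I would evaluate at $t=a_p(z)$. There $t^{-1}\delta(\phi_t^p(z))=\eta^{-1}$, so $\log(\eta t^{-1}\delta(\phi_t^p(z)))=0$ and $t/\delta(\phi_t^p(z))=\eta$; substituting collapses the bracket and gives
\[
  D_t^+\lambda_t^p(z)\big|_{t=a_p(z)}=\eta\, a_p(z)^{\eta-1}\left[1+F+\tfrac{E}{2}(\log\sec S-\mu_p(z))-D_t^+\delta(\phi_t^p(z))\right].
\]
Since $\mu_p(z)\leq\log\sec S$, $F>0$, and $D_t^+\delta(\phi_t^p(z))\leq 1$, the bracketed expression is bounded below by $F>0$, so the right derivative is strictly positive. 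Therefore $\lambda_t^p(z)$ is strictly increasing at $a_p(z)$ from the right, giving $\lambda^p(z)>\lambda^p_{a_p(z)}(z)$.

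Finally, at $t=b_p(z)$ we have $t^{-1}\delta(\phi_t^p(z))=\zeta^{-1}$, so $-\log(\eta t^{-1}\delta(\phi_t^p(z)))=-\log(\eta\zeta^{-1})$ and $t/\delta(\phi_t^p(z))=\zeta$. After simplification,
\[
  D_t^-\lambda_t^p(z)\big|_{t=b_p(z)}=b_p(z)^{\eta-1}\left[\eta\bigl(-\log(\eta\zeta^{-1})+1+F+\tfrac{E}{2}(\log\sec S-\mu_p(z))\bigr)-\zeta\, D_t^-\delta(\phi_t^p(z))\right].
\]
Using $\mu_p(z)\geq 0$ and $D_t^-\delta(\phi_t^p(z))\geq A$, the bracket is bounded above by
\[
  \eta\bigl(-\log(\eta\zeta^{-1})+1+F+\tfrac{E}{2}\log\sec S\bigr)-\zeta A,
\]
and multiplying \eqref{eq:eta_defining_estimate} through by $\zeta$ shows this quantity is strictly negative. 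Hence $\lambda_t^p(z)$ is strictly decreasing at $b_p(z)$ from the left, and $\lambda^p(z)>\lambda^p_{b_p(z)}(z)$. The main (only) obstacle is purely bookkeeping, namely tracking all the signs and confirming that the precise definition of $\eta$ in \eqref{eq:eta_defining_estimate} is exactly what is needed to make the right endpoint inequality close; this is no accident, since the inequality was engineered for this step.
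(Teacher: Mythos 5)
Your proof is correct and follows essentially the same approach as the paper: compute the one-sided derivative of $\lambda_t^p(z)$ in $t$, use the bounds from Lemma \ref{lem:tools}\eqref{item:t_delta_derivative_estimate} together with the endpoint identities $t^{-1}\delta(\phi_t^p(z))=\eta^{-1}$ and $=\zeta^{-1}$, and invoke \eqref{eq:eta_defining_estimate} to close the right endpoint. The only difference is cosmetic bookkeeping (you factor out $t^{\eta-1}$ and keep $D_t^\pm\delta$ explicit before bounding, whereas the paper substitutes the bounds on $D_t^\pm\delta$ directly into the inequality), but the chain of estimates is the same.
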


\begin{proof}
  Suppose $t=a_p(z)$.  From \eqref{eq:delta_derivative_estimate}, we have
  \[
    D_t^+\lambda_{t}^p(z)\geq \eta t^{-1}\lambda_t^p(z)+t^{\eta-1}-\frac{t^\eta}{\delta(\phi_t^p(z))}.
  \]
  When $t=a_p(z)$, we have $\delta(\phi_t^p(z))=t\eta^{-1}$, so on $\Gamma_p$
  \[
    \lambda_t^p(z)=t^\eta\left(-\eta^{-1}+1+F+2^{-1}E(\log\sec S-\mu_p(z))\right)\geq t^\eta\left(-\eta^{-1}+1+F\right).
  \]
  Hence,
  \[
    \left.D_t^+\lambda_{t}^p(z)\right|_{t=a_p(z)}\geq \eta t^{\eta-1}F>0,
  \]
  so $\lambda_t^p(z)$ is not maximized on $[a_p(z),b_p(z)]$ when $t=a_p(z)$.

  On the other side, suppose $t=b_p(z)$.  This time, \eqref{eq:delta_derivative_estimate} will give us
  \[
    D^-_t\lambda_t^p(z)\leq \eta t^{-1}\lambda_t^p(z)+t^{\eta-1}-\frac{At^\eta}{\delta(\phi_t^p(z))}.
  \]
  Since $t=b_p(z)$ implies $\delta(\phi_t^p(z))=\zeta^{-1}t$, we have
  \[
    \lambda_t^p(z)\leq t^\eta\left(-\log\left(\eta \zeta^{-1}\right)-\eta^{-1}+1+F+2^{-1}E\log\sec S\right),
  \]
  so
  \[
    \left.D^-_t\lambda_t^p(z)\right|_{t=b_p(z)}\leq \eta t^{\eta-1}\left(-\log\left(\eta \zeta^{-1}\right)+1+F+2^{-1}E\log\sec S-A\zeta\eta^{-1}\right).
  \]
  This is strictly negative by \eqref{eq:eta_defining_estimate}, so $\lambda_t^p(z)$ is not maximized on $[a_b(z),b_p(z)]$ when $t=b_p(z)$.
\end{proof}

With this, we can now show that $\lambda_p$ has the necessary local properties.  We will see that the following lemma gives us a local defining function $-(-\lambda_p)^{1/\eta}$ satisfying the requirements of Theorem \ref{thm:main_theorem}.
\begin{lem}
\label{lem:lambda_p_continuity}
  For $p\in\partial\Omega$ let $\lambda^p$ be defined by \eqref{eq:lambda_p_defn}.  Then on $\dom\lambda^p$, $\lambda^p$ is Lipschitz continuous with
  \begin{equation}
  \label{eq:lambda_p_Lipschitz}
    \abs{\nabla\lambda^p}\leq O((\delta(z))^{\eta-1})
  \end{equation}
  almost everywhere uniformly in $p$ and $\lambda_p$ is strictly plurisubharmonic with
  \begin{equation}
  \label{eq:lambda_p_plurisubharmonic}
    i\ddbar\lambda^p\geq 2^{-1}E\left(\eta^{-1}-A\right)^{-\eta}(\delta(z))^\eta\omega
  \end{equation}
  in the sense of currents.
\end{lem}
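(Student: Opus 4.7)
I will handle the Lipschitz and plurisubharmonicity assertions separately, both by exploiting the key fact from Lemma \ref{lem:interior_t} that the supremum defining $\lambda^p$ is attained strictly inside $(a_p(z), b_p(z))$. Before either argument, note that $\lambda^p$ is continuous on $\dom\lambda^p$, since $\lambda_t^p(z)$ is jointly continuous, the endpoints $a_p, b_p$ are continuous by Lemma \ref{lem:a_b_defn}, and the maximum of a jointly continuous function over a continuously varying compact interval is continuous.

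For the Lipschitz estimate, fix $z$ and let $t^*$ be a maximizer. At any point of differentiability of $\lambda^p$, the first-order condition $\partial_t \lambda_t^p|_{t=t^*} = 0$ together with the envelope principle gives $\nabla \lambda^p(z) = \nabla_z \lambda_{t^*}^p(z)$, so I need only bound $\nabla_z \lambda_{t^*}^p$. Since $\phi_{t}^p$ is a holomorphic isometry, $\delta$ is $1$-Lipschitz, and $\mu_p$ is $1$-Lipschitz in $z$ by Lemma \ref{lem:tools} \eqref{item:t_Lipschitz_in_z}, direct differentiation of the defining formula for $\lambda_t^p$ yields $\abs{\nabla_z \lambda_{t^*}^p(z)} \leq (t^*)^\eta(1/\delta(\phi_{t^*}^p(z)) + E/2)$. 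Combining this with $\delta(\phi_{t^*}^p(z)) \geq \zeta^{-1} t^*$ from \eqref{eq:t_delta_estimate} and with the lower bound $t^* \geq a_p(z) \geq \delta(z)/(\eta^{-1} - A)$ (observed in the proof of Lemma \ref{lem:a_b_defn}) yields $\abs{\nabla \lambda^p(z)} \leq C \delta(z)^{\eta - 1}$ for a constant $C$ independent of $p$. Local Lipschitz continuity, which justifies almost-everywhere differentiability via Rademacher, follows from the envelope comparison $\lambda^p(z_1) - \lambda^p(z_2) \leq \lambda_{t^*(z_1)}^p(z_1) - \lambda_{t^*(z_1)}^p(z_2)$, using that $t^*(z_1)$ remains admissible at $z_2$ for $z_2$ near $z_1$ by continuity of $a_p, b_p$.

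For the plurisubharmonicity estimate, fix $z_0$ and let $M(z_0)$ denote the argmax, a nonempty compact subset of $(a_p(z_0), b_p(z_0))$ by Lemma \ref{lem:interior_t}. Joint continuity of $\lambda_t^p(z)$ and continuity of $a_p, b_p$ imply upper hemicontinuity of $z \mapsto M(z)$, so I may choose a compact interval $K$ with $M(z_0)$ contained in the interior of $K$ and $K \subset (a_p(z_0), b_p(z_0))$, together with a neighborhood $U$ of $z_0$, such that $K \subset [a_p(z), b_p(z)]$ and every maximizer of $\lambda_t^p(z)$ lies in $K$ for $z \in U$. Thus $\lambda^p(z) = \sup_{t \in K} \lambda_t^p(z)$ on $U$. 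Setting $c_0 = 2^{-1} E (\eta^{-1} - A)^{-\eta} \delta(z_0)^\eta$, every $t \in K$ satisfies $t \geq a_p(z_0) \geq \delta(z_0)/(\eta^{-1} - A)$, so $i\ddbar \lambda_t^p \geq 2^{-1} t^\eta E \omega \geq c_0 \omega$ by \eqref{eq:plurisubharmonicity_bound}. By Lemma \ref{lem:tools} \eqref{item:t_kahler_generator}, each $\lambda_t^p - c_0 \mu_p$ is plurisubharmonic, and hence so is the continuous upper envelope $\lambda^p - c_0 \mu_p$ on $U$, yielding $i\ddbar \lambda^p \geq c_0 \omega$ on $U$ in the sense of currents. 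Since $z_0$ is arbitrary and $\delta$ is continuous, the global estimate follows. The main obstacle throughout is that the original supremum runs over the $z$-dependent interval $[a_p(z), b_p(z)]$, so the classical envelope result for plurisubharmonic families does not apply directly; the localization to a fixed compact parameter set $K$ near each $z_0$ is exactly what the strict interior maximum from Lemma \ref{lem:interior_t}, together with upper hemicontinuity of the argmax, provides.
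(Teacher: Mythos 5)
Your proof is correct and follows essentially the same route as the paper: bound $\abs{\nabla_z\lambda_t^p}$ uniformly by $O(\delta(z)^{\eta-1})$ via the comparability estimates \eqref{eq:bounds_t} and \eqref{eq:bounds_delta_phi}, and use the strict-interior-maximum property from Lemma \ref{lem:interior_t} to localize the admissible $t$-parameters to a fixed compact set near any given $z_0$, so that both the Lipschitz bound and the sub-mean-value property pass to the envelope (the upper hemicontinuity of the argmax you invoke is exactly what the paper's contradiction argument establishes). One small attribution slip: the bound $a_p(z)\geq(\eta^{-1}-A)^{-1}\delta(z)$ is derived in \eqref{eq:bounds_t} within the proof of the present lemma, not in Lemma \ref{lem:a_b_defn}.
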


\begin{proof}
  Since $\phi_t^p(z)$ is an isometry, it has a Lipschitz constant of $1$.  By Lemma \ref{lem:tools} \eqref{item:t_Lipschitz_in_z}, the Lipschitz constant of $\lambda_t^p(z)$ with respect to $z$ is locally bounded by $t^\eta\left((\delta(\phi_t^p(z)))^{-1}+2^{-1}E\right)$.  By \eqref{eq:delta_estimate} and \eqref{eq:t_delta_estimate} when $a_p(z)\leq t\leq b_p(z)$ we have,
  \[
    \delta(z)\leq\delta(\phi_t^p(z))-At\leq\min\set{\left(\eta^{-1}-A\right)t,\left(1-A\eta\right)\delta(\phi_t^p(z))}
  \]
  and
  \[
    \delta(z)\geq\delta(\phi_t^p(z))-t\geq\max\set{\left(\zeta^{-1}-1\right)t,\left(1-\zeta\right)\delta(\phi_t^p(z))},
  \]
  so
  \begin{equation}
  \label{eq:bounds_t}
    \left(\eta^{-1}-A\right)^{-1}\delta(z)\leq t\leq\left(\zeta^{-1}-1\right)^{-1}\delta(z),
  \end{equation}
  and
  \begin{equation}
  \label{eq:bounds_delta_phi}
    \left(1-A\eta\right)^{-1}\delta(z)\leq\delta(\phi_t^p(z))\leq\left(1-\zeta\right)^{-1}\delta(z).
  \end{equation}
  Hence, when $a_p(z)\leq t\leq b_p(z)$, $t$ and $\delta(\phi_t^p(z))$ are both uniformly comparable with $\delta(z)$, so the Lipschitz constant of $\lambda_t^p(z)$ with respect to $z$ is locally uniformly bounded by terms on the order of $O\left((\delta(z))^{\eta-1}\right)$.

  For $z\in\dom\lambda^p$, fix $a_p(z)<t<b_p(z)$ such that $\lambda^p(z)=\lambda^p_t(z)$.  Suppose that for every $j\in\mathbb{N}$ there exists $z_j\in B(z,1/j)\cap\dom\lambda^p$ and $a_p(z_j)<t_j<b_p(z_j)$ such that $\lambda^p(z_j)=\lambda^p_{t_j}(z_j)$ and either $t\geq b_p(z_j)$, $t\leq a_p(z_j)$, $t_j\geq b_p(z)$, or $t_j\leq a_p(z)$.  We will show that each of these leads to a contradiction for sufficiently large $j$.  Since $b_p$ is continuous in $z$ and $[z_j]\rightarrow[z]$, $t\geq b_p(z_j)$ would imply that $t\geq b_p(z)$, contradicting Lemma \ref{lem:interior_t}.  Hence $t<b_p(z_j)$ for all sufficiently large $j$, and the same argument shows that $t>a_p(z_j)$ for all sufficiently large $j$.  If $t_j\geq b_p(z)$ for infinitely many $j$, then there must be a limit point $s$ of $\set{t_j}$ satisfying $s\geq b_p(z)$.  Since $t_j\leq b_p(z_j)$, continuity again implies $s\leq b_p(z)$, so $s=b_p(z)$.  However, Lemma \ref{lem:interior_t} implies that $\lambda_t^p(z)>\lambda_s^p(z)$, so for sufficiently large $j$ we must have $\lambda_t^p(z_j)>\lambda_{t_j}^p(z_j)$, a contradiction.  Since $t_j\leq a_p(z)$ leads to a similar contradiction, we conclude that there must exist $R>0$ such that for every $w\in B(z,R)\cap\dom\lambda^p$ and $a_p(w)<s<b_p(w)$ satisfying $\lambda^p(w)=\lambda^p_s(w)$ we have $a_p(z)<s<b_p(z)$ and $a_p(w)<t<b_p(w)$.  We may assume that $B(z,R)\subset\dom\lambda^p$ and $\lambda_t^p(w)$ has Lipschitz constant bounded by $M(\delta(z))^{\eta-1}$ on $B(z,R)$ for all $a_p(z)<t<b_p(z)$ with $M$ independent of $z$ and $R$.

  Now, for $w\in B(z,R)$ we have
  \[
    \lambda^p(w)=\lambda^p_s(w)\leq\lambda^p_s(z)+M(\delta(z))^{\eta-1}\dist(z,w)\leq\lambda^p(z)+M(\delta(z))^{\eta-1}\dist(z,w)
  \]
  and
  \[
    \lambda^p(z)=\lambda^p_t(z)\leq\lambda^p_t(w)+M(\delta(z))^{\eta-1}\dist(z,w)\leq\lambda^p(w)+M(\delta(z))^{\eta-1}\dist(z,w),
  \]
  so $\abs{\lambda^p(z)-\lambda^p(w)}\leq M(\delta(z))^{\eta-1}$ on $B(z,R)$, and \eqref{eq:lambda_p_Lipschitz} follows.  By applying the sub-mean-value property on $B(z,R)$ with \eqref{eq:plurisubharmonicity_bound}, we may use \eqref{eq:bounds_t} to show \eqref{eq:lambda_p_plurisubharmonic}.

\end{proof}

Now, we are ready to optimize in $p$ and obtain a global plurisubharmonic function.  Let
\[
  \dom\lambda=\set{z\in\Omega:0<\delta(z)<\left(1-\zeta\right)T}.
\]
For any $z\in\dom\lambda$, we define
\begin{equation}
\label{eq:lambda_defn}
  \lambda(z)=\sup_{\set{p\in\partial\Omega:z\in\Gamma_p}}\lambda^p(z).
\end{equation}
Although $\lambda^p$ can still be defined for $z\in\overline{\dom\lambda^p}$, we lose the important properties of Lemma \ref{lem:lambda_p_continuity}.  Since Lemma \ref{lem:tools} \eqref{item:t_continuity} and Lemma \ref{lem:tools} \eqref{item:t_gamma_characterization} imply that this supremum is taken over a compact set, we must have $\lambda(z)=\lambda^p(z)$ for some $p\in\partial\Omega$.  Our next goal is to show that $z$ is in the interior of $\Gamma_p$ so that $\lambda^p$ is Lipschitz and plurisubharmonic in a neighborhood of $z$.

\begin{lem}
\label{lem:interior_Gamma}
  For $z\in\dom\lambda$, define $\lambda$ by \eqref{eq:lambda_defn}.  Then for any $p\in\partial\Omega$ satisfying $z\in\partial\Gamma_p$, $\lambda(z)>\lambda^p(z)$.
\end{lem}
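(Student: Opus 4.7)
The plan is to perturb $p$ tangentially to a nearby boundary point $p_j$ along a direction where $\mu_q(z)$ strictly decreases; the design of $\zeta$ in the proof of Theorem \ref{thm:main_theorem} is tuned precisely so that the resulting boost to the $-\tfrac{E}{2}\mu_{p_j}(z)$ term in $\lambda^{p_j}_t(z)$ outweighs the worst-case change in $-\log(\eta t^{-1}\delta(\phi_t^{p_j}(z)))$, as long as $t<b_p(z)$ strictly.

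Since $0<\delta(z)<(1-\zeta)T<T$, Lemma \ref{lem:tools} \eqref{item:t_gamma_characterization} forces $\mu_p(z)=\log\sec S$, so Lemma \ref{lem:tools} \eqref{item:t_boundary_sequence} supplies $u\in\Tan(\partial\Omega,p)$ with $D_{p,u}\mu_p(z)\le -\tfrac{SA^3}{\sqrt{2}(1+A)^3}=-2B\zeta/E$, the last equality being the definition of $\zeta$. I would select a sequence $\{p_j\}\subset\partial\Omega$ with $[p_j]\to[p]$ and $\gamma_0'(p,p_j)\to u/|u|$. The $C^1$ dependence of $\mu_q(z)$ on $q$ (Lemma \ref{lem:weight_and_map} \eqref{item:w_first_derivative} combined with the $C^1$ property of $v$ from Lemma \ref{lem:transverse_vector_field}) yields
\[
\mu_{p_j}(z)-\mu_p(z)=\dist(p,p_j)\,D_{p,u}\mu_p(z)+o(\dist(p,p_j)),
\]
so $\mu_{p_j}(z)<\log\sec S$ strictly for large $j$. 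Together with $\delta(z)<T$, this places $z$ in the interior of the sublevel set defining $\Gamma_{p_j}$; a path-connectedness argument, deforming a path from $p$ to $z$ inside $\Gamma_p$ into a nearby path from $p_j$ to $z$ via continuity of $\mu_q$ and $\phi_t^q$ in $q$, then verifies $z\in\Gamma_{p_j}$ so that $\lambda^{p_j}(z)$ contributes to the supremum \eqref{eq:lambda_defn} defining $\lambda(z)$.

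Next, let $t_*\in(a_p(z),b_p(z))$ realize $\lambda^p(z)=\lambda^p_{t_*}(z)$ as in Lemma \ref{lem:interior_t}. Continuity of $a_q(z)$ and $b_q(z)$ in $q$ ensures $t_*\in(a_{p_j}(z),b_{p_j}(z))$ for large $j$, so $\lambda^{p_j}(z)\ge\lambda^{p_j}_{t_*}(z)$ and
\[
\lambda^{p_j}_{t_*}(z)-\lambda^p_{t_*}(z)=t_*^{\eta}\left[-\log\frac{\delta(\phi^{p_j}_{t_*}(z))}{\delta(\phi^p_{t_*}(z))}-\frac{E}{2}\bigl(\mu_{p_j}(z)-\mu_p(z)\bigr)\right].
\]
Lemma \ref{lem:tools} \eqref{item:t_Lipschitz_in_p} with the $1$-Lipschitz property of $\delta$ bounds the log term below by $-Bt_*\dist(p,p_j)/\delta(\phi^p_{t_*}(z))+o(\dist(p,p_j))$, while the $\mu$ term contributes at least $B\zeta\dist(p,p_j)+o(\dist(p,p_j))$. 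The bracketed factor is therefore at least
\[
\left[B\zeta-\frac{Bt_*}{\delta(\phi^p_{t_*}(z))}\right]\dist(p,p_j)+o(\dist(p,p_j)).
\]
Because $t_*<b_p(z)$ the inequality in \eqref{eq:t_delta_estimate} is strict, $\delta(\phi^p_{t_*}(z))>\zeta^{-1}t_*$, so $Bt_*/\delta(\phi^p_{t_*}(z))<B\zeta$ and the coefficient is strictly positive. Hence $\lambda^{p_j}_{t_*}(z)>\lambda^p_{t_*}(z)=\lambda^p(z)$ for $j$ large, yielding $\lambda(z)\ge\lambda^{p_j}(z)>\lambda^p(z)$.

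The most delicate step will be confirming that $z\in\Gamma_{p_j}$ in the sense of the correct connected component containing $p_j$, rather than merely in the defining sublevel set. This is an upper-semicontinuity statement for $q\mapsto\Gamma_q$ at $q=p$; I would handle it by exhibiting a continuous path in $\Gamma_p$ from $p$ to $z$ that lies in the interior of $\Gamma_p$ except possibly at the endpoints, then using the $C^1$ dependence of $\mu_q$ and continuity of $\delta$ to perturb this path into $\Gamma_{p_j}$ once $p_j$ is close enough to $p$.
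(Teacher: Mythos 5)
Your core argument coincides with the paper's: invoke Lemma \ref{lem:tools} \eqref{item:t_boundary_sequence} to get a tangential direction $u$ along which $\mu_p$ decreases at rate at least $SA^3/(\sqrt{2}(1+A)^3)=2B\zeta/E$, take a boundary sequence $p_j\to p$ with $\gamma_0'(p,p_j)\to u$, fix an interior maximizer $t_*\in(a_p(z),b_p(z))$ via Lemma \ref{lem:interior_t}, use continuity of $a_q,b_q$ in $q$ to keep $t_*$ admissible, and compare $\lambda^{p_j}_{t_*}(z)$ with $\lambda^p_{t_*}(z)$ by bounding the $\log\delta$ change through \eqref{eq:Lipschitz_in_p} and the $\mu$ change through the tangential derivative. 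Your observation that strict positivity of the bracket requires $t_*<b_p(z)$ strictly (so that $\delta(\phi^p_{t_*}(z))>\zeta^{-1}t_*$) is exactly the point, and the arithmetic with $\zeta$ checks out.

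Where you go beyond the paper's exposition is in noticing that $\lambda^{p_j}(z)$ can only enter the supremum \eqref{eq:lambda_defn} if $z\in\Gamma_{p_j}$, and that this is not a free observation: $\Gamma_{p_j}$ is a \emph{connected component} of the sublevel set, so $\mu_{p_j}(z)<\log\sec S$ and $\delta(z)<T$ alone do not place $z$ in $\Gamma_{p_j}$. The paper implicitly assumes $z\in\Gamma_{p_j}$ the moment it invokes $a_{p_j}(z)$ and $b_{p_j}(z)$, without comment. Your proposed path-deformation fix is plausible, but as sketched it leaves two things unverified: that a path from $p$ to $z$ exists in $\Gamma_p$ whose interior actually avoids the sets $\{\mu_p=\log\sec S\}$ and $\{\delta=T\}$ (connectedness of the closed set $\Gamma_p$ does not by itself give path-connectedness, nor a path avoiding the boundary of the defining inequalities), and that the perturbed path remains in the component \emph{containing $p_j$} rather than some other component of $A_{p_j}$. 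A cleaner route is probably to flow $z$ backward by $\phi^{p_j}_{-s}$ to $\partial\Omega$ (this preserves $\mu_{p_j}$, which is below $\log\sec S$) and connect the resulting boundary point to $p_j$ within $\partial\Omega\cap\{\mu_{p_j}<\log\sec S\}$, but this still requires some care. So: same method as the paper, plus a genuinely useful remark that the paper elides, but the remark is not yet fully discharged.
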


\begin{proof}
  Suppose that $z$ is on the boundary of $\Gamma_p$.  Then we must have $\mu_p(z)=\log\sec S$ by Lemma \ref{lem:tools} \eqref{item:t_gamma_characterization}.  Let $u\in\Tan(\partial\Omega,p)$ be given by Lemma \ref{lem:tools} \eqref{item:t_boundary_sequence} and let $\{p_j\}\subset\partial\Omega$ satisfy $\gamma_0'(p,p_j)\rightarrow u$.  Fix $a_p(z)<t<b_p(z)$ so that $\lambda^p(z)=\lambda_t^p(z)$.  Since $a_p$ and $b_p$ are continuous in $p$, we must have $a_{p_j}(z)<t<b_{p_j}(z)$ for $j$ sufficiently large.  Then
  \begin{multline*}
    \lambda_t^{p_j}(z)-\lambda_t^p(z)=t^\eta\left(-\log\left(\frac{\delta(\phi_t^{p_j}(z))}{\delta(\phi_t^p(z))}\right)-2^{-1}E(\mu_{p_j}(z)-\mu_p(z))\right)\\
    \geq t^\eta\left(1-\frac{\delta(\phi_t^{p_j}(z))}{\delta(\phi_t^p(z))}-2^{-1}E(\mu_{p_j}(z)-\mu_p(z))\right).
  \end{multline*}
  Since $\delta$ always has Lipschitz constant one, \eqref{eq:Lipschitz_in_p} gives us $\abs{1-\frac{\delta(\phi_t^{p_j}(z))}{\delta(\phi_t^p(z))}}\leq\frac{Bt\dist(p,p_j)}{\delta(\phi_t^p(z))}$.  Hence, Lemma \ref{lem:tools} \eqref{item:t_boundary_sequence} gives us
  \begin{multline*}
    \liminf_{j\rightarrow\infty}\frac{\lambda_t^{p_j}(z)-\lambda_t^p(z)}{\dist(p,p_j)}\\
    \geq t^\eta\left(-\frac{Bt}{\delta(\phi_t^p(z))}+\frac{SA^3E}{2\sqrt{2}(1+A)^3}\right)=B t^\eta\left(-\frac{t}{\delta(\phi_t^p(z))}+\zeta\right).
  \end{multline*}
  By \eqref{eq:t_delta_estimate}, $t\leq b_p(z)$ implies $\liminf_{j\rightarrow\infty}\frac{\lambda_t^{p_j}(z)-\lambda_t^p(z)}{\dist(p,p_j)}>0$.  Hence, for infinitely many $p_j$, $\lambda_t^{p_j}(z)>\lambda_t^p(z)$, so we conclude that
  \[
    \lambda(z)\geq\lambda^{p_j}(z)\geq\lambda^{p_j}_t(z)>\lambda_t^p(z)=\lambda^p(z)
  \]
  when $z$ lies on the boundary of $\Gamma_p$.
\end{proof}

With this, we can demonstrate that $\lambda$ inherits the necessary properties from $\lambda_p$.
\begin{lem}
\label{lem:lambda_continuity}
  Let $\lambda$ be defined by \eqref{eq:lambda_defn}.  Then on $\dom\lambda$, $\lambda$ is Lipschitz continuous with
  \begin{equation}
  \label{eq:lambda_Lipschitz}
    \abs{\nabla\lambda}\leq O((\delta(z))^{\eta-1})
  \end{equation}
  almost everywhere and strictly plurisubharmonic with
  \begin{equation}
  \label{eq:lambda_plurisubharmonic}
    i\ddbar\lambda\geq 2^{-1}E\left(\eta^{-1}-A\right)^{-\eta}(\delta(z))^\eta\omega
  \end{equation}
  in the sense of currents.
\end{lem}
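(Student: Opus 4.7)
The plan is to reduce both conclusions to their analogues for the individual $\lambda^p$ supplied by Lemma \ref{lem:lambda_p_continuity}, with Lemma \ref{lem:interior_Gamma} guaranteeing that the supremum defining $\lambda$ at any $z\in\dom\lambda$ is attained at a base point for which $z$ lies in the interior of $\Gamma_{p_0}$. Concretely, fix $z\in\dom\lambda$; the set $K_z=\{p\in\partial\Omega:z\in\Gamma_p\}$ is compact (continuity in $p$ of the defining quantities comes from Lemma \ref{lem:tools} \eqref{item:t_continuity}, \eqref{item:t_gamma_characterization}), and $p\mapsto\lambda^p(z)$ is continuous on $K_z$, so the supremum is attained at some $p_0\in K_z$. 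Lemma \ref{lem:interior_Gamma} then forces $z$ into the interior of $\Gamma_{p_0}$, i.e.\ $z\in\dom\lambda^{p_0}$, so Lemma \ref{lem:lambda_p_continuity} applies in a ball around $z$.

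For the Lipschitz estimate \eqref{eq:lambda_Lipschitz}, choose a small ball $U$ around $z$ contained in $\dom\lambda^{p_0}$ on which $\lambda^{p_0}$ has Lipschitz constant $M=O((\delta(z))^{\eta-1})$, noting that Lemma \ref{lem:lambda_p_continuity} makes this constant uniform in the base point. For $w\in U$ one direction is immediate: $\lambda(w)\geq\lambda^{p_0}(w)\geq\lambda(z)-M\dist(z,w)$. For the reverse bound pick $p_w$ realizing $\lambda(w)=\lambda^{p_w}(w)$; Lemma \ref{lem:interior_Gamma} places $w$ in the interior of $\Gamma_{p_w}$. I expect the main technical obstacle to be the uniform buffer statement that, after shrinking $U$ if necessary, $z$ also lies in $\Gamma_{p_w}$ for every admissible choice of $w\in U$ and $p_w$; this is handled by a contradiction-plus-compactness argument, since otherwise a subsequential limit $p_\infty\in K_z$ would realize $\lambda(z)$ while placing $z$ on the boundary of $\Gamma_{p_\infty}$, violating Lemma \ref{lem:interior_Gamma}. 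With the buffer in hand, $\lambda(z)\geq\lambda^{p_w}(z)\geq\lambda^{p_w}(w)-M\dist(z,w)=\lambda(w)-M\dist(z,w)$, and \eqref{eq:lambda_Lipschitz} follows.

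For the plurisubharmonicity bound \eqref{eq:lambda_plurisubharmonic}, on a small neighborhood $V$ of $z$ write the continuous positive $(1,1)$-form
\[
    \Theta = 2^{-1}E\left(\eta^{-1}-A\right)^{-\eta}(\delta(\cdot))^\eta\,\omega
\]
as $i\ddbar h$ for a continuous local potential $h$. By Lemma \ref{lem:lambda_p_continuity}, each $\lambda^p-h$ is plurisubharmonic on $V\cap\dom\lambda^p$, and the uniform-buffer step from the Lipschitz argument shows that on a small enough $V$ we may realize $\lambda-h$ as the pointwise supremum of the subfamily of $\lambda^p-h$ indexed by those $p$ for which every point of $V$ is interior to $\Gamma_p$. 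Since the Lipschitz step already gives $\lambda-h$ continuous, it is the upper envelope of a locally bounded family of plurisubharmonic functions, hence itself plurisubharmonic on $V$. Taking $i\ddbar$ yields $i\ddbar\lambda\geq\Theta$ in the sense of currents, which is \eqref{eq:lambda_plurisubharmonic}.
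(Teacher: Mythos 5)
Your Lipschitz argument matches the paper's almost verbatim: the compactness/contradiction step establishing the ``buffer'' ($z\in\dom\lambda^{p_w}$ for all $p_w$ that realize the supremum at nearby $w$) is exactly what the paper proves, and the two one-sided estimates are the same.

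The plurisubharmonicity step has a genuine gap, though. You propose to write
\[
  \Theta=2^{-1}E\left(\eta^{-1}-A\right)^{-\eta}\bigl(\delta(\cdot)\bigr)^\eta\,\omega
\]
locally as $i\ddbar h$ and then invoke the upper-envelope theorem for the family $\{\lambda^p-h\}$. But $\Theta$ is not a closed form: $d\Theta = d\bigl(c\,\omega\bigr)=dc\wedge\omega$ with $dc$ proportional to $\delta^{\eta-1}\,d\delta\neq 0$, so no local potential $h$ with $i\ddbar h=\Theta$ exists. As written, the family $\{\lambda^p-h\}$ is undefined, and the envelope argument does not get off the ground. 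The repair is to freeze the coefficient: fix $z_0$, set $c_0=2^{-1}E(\eta^{-1}-A)^{-\eta}\delta(z_0)^\eta$, and work with the \emph{closed} form $(c_0-\eps)\omega$, which does have a local potential (a constant multiple of $\mu_{p_0}$); by continuity of $\delta$, each $\lambda^p-(c_0-\eps)\mu_{p_0}$ is plurisubharmonic on a small enough neighborhood of $z_0$, the envelope argument then gives $i\ddbar\lambda\geq(c_0-\eps)\omega$ there, and you recover \eqref{eq:lambda_plurisubharmonic} by letting $\eps\to 0$ and using Lebesgue differentiation on the order-zero current $i\ddbar\lambda$. The paper sidesteps this entirely by applying the sub-mean-value inequality at the single point $z$ with the single $p$ realizing $\lambda(z)=\lambda^p(z)$ and $B(z,R)\subset\dom\lambda^p$; since $\lambda\geq\lambda^p$ on that ball with equality at the center, $\lambda$ inherits the corrected sub-mean-value inequality (and hence the current bound) directly from $\lambda^p$, with no need for a potential of the variable-coefficient form and no need to certify that a whole subfamily of $\lambda^p$ is plurisubharmonic on a common neighborhood. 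That last point is a second, smaller gap in your sketch: the buffer you establish gives only $z\in\dom\lambda^{p_w}$, not $V\subset\dom\lambda^{p_w}$, so the subfamily ``indexed by those $p$ for which every point of $V$ is interior to $\Gamma_p$'' may fail to contain the maximizers you need; fixing it requires a further uniformity argument (continuity of $p\mapsto\Gamma_p$ plus compactness of the relevant maximizers) that you don't spell out. Both issues are repairable, but the paper's sub-mean-value route avoids them by design.
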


\begin{proof}
  For $z\in\dom\lambda$, Lemma \ref{lem:interior_Gamma} implies that there exists $p\in\partial\Omega$ such that $z\in\dom\lambda^p$ and $\lambda^p(z)=\lambda(z)$.  Suppose that for every $j\in\mathbb{N}$, there exists $z_j\in\dom\lambda\cap B(z,1/j)$ and $p_j\in\partial\Omega$ such that $z_j\in\dom\lambda^{p_j}$, $\lambda^{p_j}(z_j)=\lambda(z_j)$, and $z\notin\dom\lambda^{p_j}$.  If $q$ is any limit point of $\set{p_j}$, then we must have $z\in\Gamma_q$ by Lemma \ref{lem:tools} \eqref{item:t_continuity} and Lemma \ref{lem:tools} \eqref{item:t_gamma_characterization}.  Lemma \ref{lem:interior_Gamma} implies $\lambda(z)>\lambda^q(z)$.  However, this implies $\lambda(z_j)>\lambda^{p_j}(z_j)$ for $j$ sufficiently large, so we have a contradiction.

  Hence, there exists $R>0$ so that for every $w\in\dom\lambda\cap B(z,R)$ and $q\in\partial\Omega$ satisfying $w\in\dom\lambda^q$ and $\lambda^q(w)=\lambda(w)$, $z\in\dom\lambda^q$.  We may assume that $B(z,R)\subset\dom\lambda^p$.  Using \eqref{eq:lambda_p_Lipschitz}, we may also assume that the Lipschitz constant of $\lambda^q$ is bounded by $M(\delta(z))^{\eta-1}$ whenever $z\in\dom\lambda^q$.  Hence,
  \[
    \lambda(w)=\lambda^q(w)\leq\lambda^q(z)+M(\delta(z))^{\eta-1}\dist(z,w)\leq\lambda(z)+M(\delta(z))^{\eta-1}\dist(z,w)
  \]
  and
  \[
    \lambda(z)=\lambda^p(z)\leq\lambda^p(w)+M(\delta(z))^{\eta-1}\dist(z,w)\leq\lambda(w)+M(\delta(z))^{\eta-1}\dist(z,w),
  \]
  so $\abs{\lambda(z)-\lambda(w)}\leq M(\delta(z))^{\eta-1}\dist(z,w)$ for $w\in B(z,R)$, and \eqref{eq:lambda_Lipschitz} follows.  We may apply the sub-mean-value property on $B(z,R)$ with \eqref{eq:lambda_p_plurisubharmonic} to show \eqref{eq:lambda_plurisubharmonic}.
\end{proof}

To estimate $\lambda$, we observe that by \eqref{eq:t_delta_estimate} we have
\[
  \lambda_t^p(z)\leq t^\eta\left(-\log\left(\eta \zeta^{-1}\right)-\eta^{-1}+1+F+2^{-1}E\log\sec S\right).
\]
From \eqref{eq:eta_defining_estimate}, this implies
\[
  \lambda_t^p(z)\leq \eta^{-1} t^\eta\left(A\zeta-1\right).
\]
Using \eqref{eq:bounds_t},
\[
  \lambda_t^p(z)\leq -\eta^{-1} (\eta^{-1}-A)^{-\eta}\left(1-A\zeta\right)(\delta(z))^\eta.
\]
On the other hand, \eqref{eq:t_delta_estimate} also gives us
\[
  \lambda_t^p(z)\geq t^\eta\left(-\eta^{-1}+1+F\right),
\]
so \eqref{eq:bounds_t} implies
\[
  \lambda_t^p(z)\geq -\left(\zeta^{-1}-1\right)^{-\eta}\left(\eta^{-1}-1-F\right)(\delta(z))^\eta.
\]
Since these upper and lower bounds are independent of $t$ and $p$, we conclude
\begin{multline*}
  -\left(\zeta^{-1}-1\right)^{-\eta}\left(\eta^{-1}-1-F\right)(\delta(z))^\eta\\
  \leq\lambda(z)\leq-\eta^{-1} (\eta^{-1}-A)^{-\eta}\left(1-A\zeta\right)(\delta(z))^\eta.
\end{multline*}

Choose $T_1$ and $\lambda_1$ satisfying
\begin{gather*}
  0<T_1<\left(1-\zeta\right)T,\\
  0>\lambda_1>-\eta^{-1} (\eta^{-1}-A)^{-\eta}\left(1-A\zeta\right)T_1^\eta,
\end{gather*}
so that $\lambda(z)<\lambda_1$ when $\delta(z)=T_1$.  We have $\lambda(z)>\lambda_1$ whenever $\delta(z)<\left(\zeta^{-1}-1\right)\left(\eta^{-1}-1-F\right)^{-1/\eta}(-\lambda_1)^{1/\eta}$, so we may choose $T_0$ and $\lambda_0$ satisfying
\begin{gather*}
  0<T_0<\left(\zeta^{-1}-1\right)\left(\eta^{-1}-1-F\right)^{-1/\eta}(-\lambda_1)^{1/\eta}<T_1,\\
  \lambda_1<\lambda_0<-\left(\zeta^{-1}-1\right)^{-\eta}\left(\eta^{-1}-1-F\right)T_0^\eta,
\end{gather*}
so that $\lambda(z)>\lambda_0$ when $\delta(z)=T_0$.  Now,
\[
  \psi(z)=-\frac{\lambda_0-\lambda_1}{\log(T_1/T_0)}\log\delta(z)+\frac{\lambda_0\log T_1-\lambda_1\log T_0}{\log(T_1/T_0)}
\]
is a strictly plurisubharmonic function on $\Omega$ satisfying $\psi(z)=\lambda_1$ when $\delta(z)=T_1$ and $\psi(z)=\lambda_0$ when $\delta(z)=T_0$, so we can define a Lipschitz continuous strictly plurisubharmonic function $\tilde\lambda$ via
\[
  \tilde\lambda(z)=\begin{cases}
    \lambda(z)&\delta(z)\leq T_0,\\
    \max\set{\lambda(z),\psi(z)}& T_0<\delta(z)<T_1,\\
    \psi(z)&\delta(z)\geq T_1.
  \end{cases}
\]
If we define
\[
  \rho(z)=\begin{cases}-(-\tilde\lambda(z))^{1/\eta}&z\in\Omega\\\delta(z)&z\notin\Omega\end{cases},
\]
then we have the Lipschitz defining function satisfying the requirements of Theorem \ref{thm:main_theorem}.

\section{A Counterexample}
\label{sec:counter_example}

We will work in a local coordinate patch $\{(\tilde{z}_1,\ldots,\tilde{z}_n)\}$, with the usual convention that $\tilde{z}'=(\tilde{z}_1,\ldots,\tilde{z}_{n-1})$.  In this coordinate patch, we have $\dist(\tilde{z},\tilde{w})=\arccos\left(\frac{\abs{\tilde{z}\cdot\overline{\tilde{w}}+1}}{\sqrt{\abs{\tilde{z}}^2+1}\sqrt{\abs{\tilde{w}}^2+1}}\right)$.  Consider the domain
\[
  \Omega=\set{\tilde{z}\in\mathbb{C}^n:\abs{\tilde{z}}<1\text{ and either }\re \tilde{z}_n<0\text{ or }\im\tilde{z}_n<0}.
\]
The set
\[
  \set{\tilde{z}\in\mathbb{C}^n:\text{either }\re\tilde{z}_n\leq 0\text{ and }\im\tilde{z}_n=0\text{ or }\re\tilde{z}_n=0\text{ and }\im\tilde{z}_n\leq 0}
\]
is foliated by complex hypersurfaces, so it must be the Levi-flat boundary of a pseudoconvex domain.  Therefore $\Omega$ is the intersection of pseudoconvex domains, so it is also pseudoconvex.  The intersection is transverse (in the real sense), so $\Omega$ must also be Lipschitz.

On the set
\[
  \mathcal{O}=\set{\tilde{z}\in\mathbb{C}^n:\abs{\tilde{z}}<\sqrt{2}-1\text{, }\re \tilde{z}_n<0\text{, and }\im\tilde{z}_n<0},
\]
the point on $\partial\Omega$ minimizing the distance to $\tilde{z}$ is $(\tilde{z}',0)$, so
\[
  \delta(\tilde{z})|_{\mathcal{O}}=\arccos\left(\frac{\sqrt{\abs{\tilde{z}'}^2+1}}{\sqrt{\abs{\tilde{z}}^2+1}}\right)=\arcsin\left(\frac{\abs{\tilde{z}_n}}{\sqrt{\abs{\tilde{z}}^2+1}}\right)
\]
When $\tilde{z}=(0,\ldots,0,\tilde{z}_n)$, we have $\delta(\tilde{z})=\arccos\left(\frac{1}{\sqrt{\abs{\tilde{z}_n}^2+1}}\right)$, so
\[
  \frac{\partial\delta}{\partial\tilde{z}_n}(\tilde{z})=\frac{\overline{\tilde{z}_n}}{2\abs{\tilde{z}_n}(\abs{\tilde{z}_n}^2+1)}
\]
and
\[
  \frac{\partial^2\delta}{\partial\tilde{z}_n\partial\overline{\tilde{z}_n}}(\tilde{z})=\frac{1-\abs{\tilde{z}_n}^2}{4\abs{\tilde{z}_n}(\abs{\tilde{z}_n}^2+1)^2}.
\]
Then
\[
  \frac{\partial^2(-\delta^\eta)}{\partial\tilde{z}_n\partial\overline{\tilde{z}_n}}(\tilde{z})=-\eta\delta^{\eta-1}(\tilde{z})\left(\frac{1-\abs{\tilde{z}_n}^2}{4\abs{\tilde{z}_n}(\abs{\tilde{z}_n}^2+1)^2}\right)+\eta(1-\eta)\delta^{\eta-2}(\tilde{z})\frac{1}{4(\abs{\tilde{z}_n}^2+1)^2},
\]
so if $-\delta^\eta(\tilde{z})$ is plurisubharmonic we have $\delta(\tilde{z})\leq \frac{(1-\eta)\abs{\tilde{z}_n}}{1-\abs{\tilde{z}_n}^2}$.  However, since $\arcsin x$ is convex when $x>0$, we have $\delta(\tilde{z})\geq\frac{\abs{\tilde{z}_n}}{\sqrt{\abs{\tilde{z}_n}^2+1}}$, so we must have $\frac{1}{\sqrt{\abs{\tilde{z}_n}^2+1}}\leq\frac{1-\eta}{1-\abs{\tilde{z}_n}^2}$.  Letting $\tilde{z}_n\rightarrow 0$, we have $\eta\leq 0$, a contradiction.  Hence, Proposition \ref{prop:counterexample} follows.
\bibliographystyle{amsplain}
\bibliography{harrington}

\providecommand{\bysame}{\leavevmode\hbox to3em{\hrulefill}\thinspace}
\providecommand{\MR}{\relax\ifhmode\unskip\space\fi MR }
\providecommand{\MRhref}[2]{%
  \href{http://www.ams.org/mathscinet-getitem?mr=#1}{#2}
}
\providecommand{\href}[2]{#2}
\begin{thebibliography}{10}

\bibitem{BeCh00}
B.~Berndtsson and P.~Charpentier, \emph{A {S}obolev mapping property of the
  {B}ergman kernel}, Math. Z. \textbf{235} (2000), 1--10.

\bibitem{CaSh05}
J.~Cao and M.-C. Shaw, \emph{A new proof of the {T}akeuchi theorem}, Lecture
  notes of {S}eminario {I}nterdisciplinare di {M}atematica. {V}ol. {IV}, Lect.
  Notes Semin. Interdiscip. Mat., IV, S.I.M. Dep. Mat. Univ. Basilicata,
  Potenza, 2005, pp.~65--72.

\bibitem{CaSh07}
\bysame, \emph{The {$\overline\partial$}-{C}auchy problem and nonexistence of
  {L}ipschitz {L}evi-flat hypersurfaces in {$\mathbb CP\sp n$} with {$n\ge3$}},
  Math. Z. \textbf{256} (2007), 175--192.

\bibitem{CSW04}
J.~Cao, M.-C. Shaw, and L.~Wang, \emph{Estimates for the
  {$\overline\partial$}-{N}eumann problem and nonexistence of {$C\sp 2$}
  {L}evi-flat hypersurfaces in {$\mathbb{C}\mathrm{P}\sp n$}}, Math. Z.
  \textbf{248} (2004), 183--221.

\bibitem{ChSh01}
S.-C. Chen and M.-C. Shaw, \emph{Partial differential equations in several
  complex variables}, AMS/IP Studies in Advanced Mathematics, vol.~19, American
  Mathematical Society, Providence, RI, 2001.

\bibitem{Dem87}
J.-P. Demailly, \emph{Mesures de {M}onge-{A}mp\`ere et mesures
  pluriharmoniques}, Math. Z. \textbf{194} (1987), 519--564.

\bibitem{DiFo77a}
K.~Diederich and J.~E. Fornaess, \emph{Pseudoconvex domains: an example with
  nontrivial {N}ebenh\"ulle}, Math. Ann. \textbf{225} (1977), 275--292.

\bibitem{DiFo77b}
\bysame, \emph{Pseudoconvex domains: bounded strictly plurisubharmonic
  exhaustion functions}, Invent. Math. \textbf{39} (1977), 129--141.

\bibitem{Ele75}
G.~Elencwajg, \emph{Pseudo-convexit\'e locale dans les vari\'et\'es
  kahl\'eriennes}, Ann. Inst. Fourier (Grenoble) \textbf{25} (1975), xv,
  295--314.

\bibitem{Fed59}
H.~Federer, \emph{Curvature measures}, Trans. Amer. Math. Soc. \textbf{93}
  (1959), 418--491.

\bibitem{FoKo72}
G.~B. Folland and J.~J. Kohn, \emph{The {N}eumann problem for the
  {C}auchy-{R}iemann complex}, Princeton University Press, Princeton, N.J.,
  1972, Annals of Mathematics Studies, No. 75.

\bibitem{GrWu78}
R.~E. Greene and H.~Wu, \emph{On {K}\"ahler manifolds of positive bisectional
  curvature and a theorem of {H}artogs}, Abh. Math. Sem. Univ. Hamburg
  \textbf{47} (1978), 171--185.

\bibitem{Gri85}
P.~Grisvard, \emph{Elliptic problems in nonsmooth domains}, Monographs and
  Studies in Mathematics, vol.~24, Pitman (Advanced Publishing Program),
  Boston, MA, 1985.

\bibitem{HaRa13}
P.~Harrington and A.~Raich, \emph{Defining functions for unbounded {$C^m$}
  domains}, Rev. Mat. Iberoam. \textbf{29} (2013), 1405--1420.

\bibitem{Har08a}
P.~S. Harrington, \emph{The order of plurisubharmonicity on pseudoconvex
  domains with {L}ipschitz boundaries}, Math. Res. Lett. \textbf{15} (2008),
  485--490.

\bibitem{HaSh07}
P.~S. Harrington and M.-C. Shaw, \emph{The strong {O}ka's lemma, bounded
  plurisubharmonic functions and the {$\overline{\partial}$}-{N}eumann
  problem}, Asian J. Math. \textbf{11} (2007), 127--139.

\bibitem{HeMc12}
A.-K. Herbig and J.~D. McNeal, \emph{Convex defining functions for convex
  domains}, J. Geom. Anal. \textbf{22} (2012), 433--454.

\bibitem{KeRo81}
N.~Kerzman and J.-P. Rosay, \emph{Fonctions plurisousharmoniques d'exhaustion
  born\'ees et domaines taut}, Math. Ann. \textbf{257} (1981), 171--184.

\bibitem{KrPa81}
S.~G. Krantz and H.~R. Parks, \emph{Distance to {$C^{k}$} hypersurfaces}, J.
  Differential Equations \textbf{40} (1981), 116--120.

\bibitem{OhSi98}
T.~Ohsawa and N.~Sibony, \emph{Bounded p.s.h. functions and pseudoconvexity in
  {K}\"ahler manifold}, Nagoya Math. J. \textbf{149} (1998), 1--8.

\bibitem{Ran81}
R.~M. Range, \emph{A remark on bounded strictly plurisubharmonic exhaustion
  functions}, Proc. Amer. Math. Soc. \textbf{81} (1981), 220--222.

\bibitem{Suz76}
O.~Suzuki, \emph{Pseudoconvex domains on a {K}\"ahler manifold with positive
  holomorphic bisectional curvature}, Publ. Res. Inst. Math. Sci. \textbf{12}
  (1976/77), 191--214.

\bibitem{Tak64}
A.~Takeuchi, \emph{Domaines pseudoconvexes infinis et la m\'etrique
  riemannienne dans un espace projectif}, J. Math. Soc. Japan \textbf{16}
  (1964), 159--181.

\bibitem{Wei75}
B.~M. Weinstock, \emph{Some conditions for uniform {$H$}-convexity}, Illinois
  J. Math. \textbf{19} (1975), 400--404.

\end{thebibliography}
\end{document}